\newtheorem{teorema}{Theorem}[section]
\newaliascnt{lemma}{teorema}
\newtheorem{lemma}[lemma]{Lemma}
\newaliascnt{proposizione}{teorema}
\newtheorem{proposizione}[proposizione]{Proposition}
\newaliascnt{corollario}{teorema}
\newtheorem{corollario}[corollario]{Corollary}
\theoremstyle{definition}
\newaliascnt{definition}{teorema}
\newtheorem{definition}[definition]{Definition}
\theoremstyle{remark}
\newaliascnt{osservazione}{teorema}
\newtheorem{osservazione}[osservazione]{Remark}
\newaliascnt{esempio}{teorema}
\newcommand{\spec}{{\rm Spec}\,}
\newcommand{\st}{\mathscr}
\newcommand{\sh}{\mathcal}
\newcommand{\fie}{\mathbb}
\title{The integral Chow ring of the stack of 1-pointed hyperelliptic curves}
\author{Michele Pernice \\ email \href{mailto:michele.pernice@sns.it}{michele.pernice@sns.it}}
\affil{Scuola Normale Superiore, Piazza Cavalieri 7, 56126 Pisa, Italy}
\begin{document}
 
\maketitle
\begin{abstract}
In this paper we give a complete description of the integral Chow ring of the stack $\st{H}_{g,1}$ of 1-pointed hyperelliptic curves, lifting relations and generators from the Chow ring of $\st{H}_g$. We also give a geometric interpretation for the generators.
\end{abstract}

\pagenumbering{Roman}

\tableofcontents

\pagenumbering{arabic}
\section*{Introduction}

After they were introduced by Mumford in \cite{Mum},
 the rational Chow rings $ {\rm CH} (\st{M}_{g,n})_{ \fie{Q}}$ and $ {\rm CH}(\overline{ \st{M}}_{g,n})_{ \fie{Q}}$ of moduli spaces of smooth or stable curves have been the subject of extensive investigations. The integral versions $ {\rm CH} (\st{M}_{g,n})$ and $ {\rm CH}(\overline{ \st{M}}_{g,n})$ of these rings have been introduced by Edidin and Graham in \cite{EdGra},
  and are much harder to compute. The known results are for $ {\rm CH}( \st{M}_{1,1})$ \cite{EdGra},
   $ {\rm CH}( \st{M}_{2})$ \cite{Vis3}
    and $ {\rm CH}(\overline{ \st{M}}_{2})$ \cite{Lar}.

A stack whose Chow ring is particularly amenable to be studied with the equivariant techniques of \cite{EdGra} 
is the stack $ \st{H}_{g}$ of smooth hyperelliptic curves of genus $g \geq 2$, introduced in \cite{ArVis},
 where its Picard group is computed. Its Chow ring has been computed by Edidin--Fulghesu \cite{EdFul}
  for even $g$, and by Fulghesu--Viviani and Di Lorenzo for odd $g$ \cite{FulViv, DiLor}
   (the second paper introduces new ideas to fill a gap in the first). The result is the following.

\begin{teorema}[Edidin--Fulghesu, Fulghesu--Viviani, Di Lorenzo]
	If $g$ is even, then
	\[
	 {\rm CH} (\st{H}_{g}) =
	\fie{Z}[c_1, c_2]/\bigl(2(2g + 1)c_{1}, g(g - 1)c_{1}^{2} - 4g(g + 1)c_{2}\bigr)\,.
	\]
	If $g$ is odd, then
	\[
      {\rm CH} (\st{H}_{g} )=
	\fie{Z}[\tau, c_{2}, c_{3}]/
	\bigl(4(2g + 1)\tau, 8\tau^{2} - 2(g^{2} - 1)c_{2}, 2c_{3}\bigr)\,.
	\]
\end{teorema}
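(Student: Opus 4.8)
I would follow the blueprint of equivariant intersection theory (following Edidin--Graham \cite{EdGra}) applied to an explicit quotient presentation. First I would recall from Arsie--Vistoli \cite{ArVis} that $\st{H}_g$ is a quotient stack: letting $\fie{A}(n)$ denote the $(n+1)$-dimensional affine space of binary forms of degree $n$, $V$ the standard $2$-dimensional representation of ${\rm GL}(V)={\rm GL}_2$, and $\Delta\subset\fie{A}(2g+2)$ the discriminant hypersurface of forms with a repeated root, one has $\st{H}_g\cong\bigl[(\fie{A}(2g+2)\setminus\Delta)/G\bigr]$, where $G={\rm GL}_2$ for $g$ even and $G=({\rm GL}_2\times\fie{G}_m)/\mu_2$ for $g$ odd, acting on binary forms through the natural action twisted by a suitable character. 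Thus $ {\rm CH}(\st{H}_g)= {\rm CH}_G(\fie{A}(2g+2)\setminus\Delta)$, and since the complement of $\Delta$ is an open subscheme of a $G$-representation, homotopy invariance of equivariant Chow groups yields a right-exact sequence
\[
 {\rm CH}_G(\Delta)\;\xrightarrow{\,i_*\,}\; {\rm CH}(BG)\;\longrightarrow\; {\rm CH}(\st{H}_g)\;\longrightarrow\;0 .
\]
Everything then reduces to computing $ {\rm CH}(BG)$ and the image of $i_*$.

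For the classifying space, $ {\rm CH}(B{\rm GL}_2)=\fie{Z}[c_1,c_2]$ with $c_i$ the Chern classes of $V$, which settles the ambient ring in the even case. For $g$ odd one must compute $ {\rm CH}(BG)$ for the non-special group $G=({\rm GL}_2\times\fie{G}_m)/\mu_2$; I would do this by the stratification / affine-approximation methods of Molina-Rojas--Vistoli, relating it to $ {\rm CH}(B{\rm PGL}_2)=\fie{Z}[c_2,c_3]/(2c_3)$ together with the extra $\fie{G}_m$-direction, obtaining $ {\rm CH}(BG)=\fie{Z}[\tau,c_2,c_3]/(2c_3)$, where $\tau$ is the degree-$1$ generator coming from the character group and $c_3$ is the familiar $2$-torsion class.

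To identify the image of $i_*$ I would resolve $\Delta$ by the incidence variety $\widetilde{\Delta}=\{(p,f)\in\fie{P}(V)\times\fie{A}(2g+2):\ell_p^{\,2}\mid f\}$, with $\ell_p$ a linear form vanishing at $p$. Projection to $\fie{P}(V)$ exhibits $\widetilde{\Delta}$ as the total space of a rank-$(2g+1)$ vector subbundle of the trivial bundle, so $ {\rm CH}_G(\widetilde{\Delta})= {\rm CH}_G(\fie{P}(V))$ is generated over $ {\rm CH}(BG)$ by $1$ and the hyperplane class $h$ (as $\dim\fie{P}(V)=1$), while $\phi\colon\widetilde{\Delta}\to\fie{A}(2g+2)$ is proper and birational onto $\Delta$. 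Granting that $\phi_*\colon {\rm CH}_G(\widetilde{\Delta})\to {\rm CH}_G(\Delta)$ is surjective, the projection formula shows that the image of $i_*$ is the ideal generated by $\phi_*(1)=[\Delta]$ and $\phi_*(h)$. Here $[\Delta]$ is the $G$-equivariant first Chern class of $\mathcal{O}(\Delta)$, and computing the weight of the discriminant (a semi-invariant of degree $2(2g+1)$) gives the torsion relation $2(2g+1)c_1=0$ for $g$ even and $4(2g+1)\tau=0$ for $g$ odd. The class $\phi_*(h)$ is obtained from a Gysin / excess-intersection computation on $\fie{P}(V)$ with the Chern roots of the subbundle $\ell_p^{\,2}\cdot\fie{A}(2g)$, and yields the remaining relation — $g(g-1)c_1^2-4g(g+1)c_2$ for $g$ even, and $8\tau^2-2(g^2-1)c_2$ for $g$ odd. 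Reducing $ {\rm CH}(BG)$ modulo these relations gives exactly the stated presentations.

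The main obstacle is the surjectivity of $\phi_*$ onto $ {\rm CH}_G(\Delta)$: one must rule out relations coming from the deeper strata of $\Delta$ — forms with a triple root, or with two distinct double roots — where $\phi$ fails to be an isomorphism. This calls for a careful Noetherian induction over a $G$-equivariant stratification of $\Delta$ by root-multiplicity type, computing the equivariant Chow groups of the strata and their pushforwards; it is precisely at this step that the odd-$g$ argument of \cite{FulViv} had a gap, later repaired in \cite{DiLor}. A secondary difficulty, again only in the odd case, is the honest determination of $ {\rm CH}(BG)$ for the non-special group $G$, where Chern-class generators alone are insufficient and the $2$-torsion must be controlled directly.
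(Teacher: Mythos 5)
This theorem is quoted from the literature: the paper gives no proof of it, only the citations to Edidin--Fulghesu, Fulghesu--Viviani and Di Lorenzo (its own computations start from $\st{H}_{g,1}$). Your outline does reconstruct the strategy of those references, which is also the strategy the present paper follows for the pointed case: the Arsie--Vistoli quotient presentation, the localization (right-exact) sequence, the computation of ${\rm CH}({\rm B}G)$, and control of the image of $i_*$ via proper maps onto the discriminant together with the projection formula. At that level the blueprint is correct, and you correctly locate the two hard points (surjectivity onto ${\rm CH}_G(\Delta)$, and the non-special group in the odd case).

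Two concrete problems remain. First, your group in the odd case is wrong: $({\rm GL}_2\times\fie{G}_m)/\mu_2$ is five-dimensional and in fact isomorphic to ${\rm GL}_2\times\fie{G}_m$ via $(A,t)\mapsto(tA,t^2)$, hence special with torsion-free polynomial Chow ring --- incompatible with the $2$-torsion class $c_3$ in the stated answer, and dimensionally incompatible with acting on $\fie{A}(2g+2)$ with $(2g-1)$-dimensional quotient. The correct group is ${\rm GL}_2/\mu_{g+1}\cong\fie{G}_m\times{\rm PGL}_2$; your subsequent computation of ${\rm CH}({\rm B}G)=\fie{Z}[\tau,c_2,c_3]/(2c_3)$ tacitly uses this correct group, so this is a self-correcting slip, but as written it is an error. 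Second, and more substantively, the single birational resolution $\widetilde{\Delta}\to\Delta$ is not a Chow envelope: birationality does not give surjectivity of $\phi_*$ on (equivariant) Chow groups, so the assertion that the image of $i_*$ is the ideal generated by $\phi_*(1)$ and $\phi_*(h)$ is not a consequence of what precedes it --- it is the theorem. The actual argument (as in Vistoli's appendix, Edidin--Fulghesu, and Sections 3--4 of this paper) uses the whole family of maps $\fie{A}(r)\times\fie{A}(2g+2-2r)\to\fie{A}(2g+2)$, $(h,f)\mapsto h^2f$, for $1\le r\le g+1$, which is an envelope only in characteristic $>2g$ (a hypothesis your statement omits but the sources require), and one must then verify that the classes pushed forward from the strata with $r\ge 2$ already lie in the ideal generated by the two $r=1$ classes. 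You flag this as the main obstacle rather than resolving it, and likewise only gesture at the ${\rm PGL}_2$-specific difficulty that Di Lorenzo's paper repairs; so the proposal is a faithful plan of attack but not a proof.
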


Here the $c_{i}$'s are Chern classes of certain natural vector bundles on $ \st{H}_{g}$ and $\tau$ is the first Chern class of a certain line bundle.

In this paper we compute the Chow ring of the stack $ \st{H}_{g,1}$ of smooth $1$-pointed hyperelliptic curves of genus $g$ for any $g \geq 2$. Our main result is as follows.

\begin{teorema}(See \hyperref[pri]{Theorem \ref{pri}})
	\begin{enumerate}
		
		\item The ring $ {\rm CH} (\st{H}_{g,1})$ is generated by two elements $t_1$, $t_2$ of degree~$1$.
		
		\item The pullback $ {\rm CH} (\st{H}_{g}) \longrightarrow  {\rm CH} (\st{H}_{g,1})$ is as follows.
		
		\begin{enumerate}
			
			\item If $g$ is even, it sends $c_{1}$ to $t_1+t_2$ and $c_{2}$ to $t_1t_2$ ($t_1$ and $t_2$ are the Chern roots of the vector bundle which defines the $c_i$'s).
			
			\item If $g$ is odd, it sends $\tau$ to $t_1$, $c_{2}$ to $-t_2^{2}$ and $c_{3}$ to $0$.
			
		\end{enumerate}
		
		\item The ideal of relations for $\st{H}_{g,1}$ is generated by the image of the ideal of relations for $\st{H}_g$.
		
	\end{enumerate}
\end{teorema}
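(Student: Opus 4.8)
The plan is to use Arsie--Vistoli's presentation of $\st{H}_g$ as a quotient stack together with the fact that $\st{H}_{g,1}$ is its universal curve. Write $A=\mathrm{Sym}^{2g+2}(V^{\vee})$ for the space of binary forms of degree $2g+2$, $A^{\mathrm{sm}}\subseteq A$ for the locus of forms with distinct roots, and $D=A\smallsetminus A^{\mathrm{sm}}=\{\Delta=0\}$ for the discriminant, so that $\st{H}_g=[A^{\mathrm{sm}}/\Gamma]$ for the relevant group $\Gamma$ ($\Gamma=\mathrm{GL}_2$ when $g$ is even, a $\fie{G}_m$-extension of it when $g$ is odd). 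The universal curve is then $\st{H}_{g,1}=[\st{Z}^{\mathrm{sm}}/\Gamma]$, where $\st{Z}$ is the closed subscheme of $A\times\fie{L}$ ($\fie{L}$ the total space of $\sh{O}_{\fie{P}(V)}(g+1)$ over $\fie{P}(V)$, suitably twisted when $g$ is odd) cut out by $z^{\otimes2}=f|_{\ell}$, and $\st{Z}^{\mathrm{sm}}$ is the preimage of $A^{\mathrm{sm}}$. Forgetting $z$ realises $\st{Z}\to A\times\fie{P}(V)$ as the double cover branched along the universal root divisor, and $\st{Z}\to\st{P}:=[\fie{P}(V)\times A^{\mathrm{sm}}/\Gamma]$ realises $\st{H}_{g,1}$ as the degree-two cover of the Brauer--Severi stack $\st{P}$ (a $\fie{P}^1$-bundle for $g$ even, a conic bundle for $g$ odd) branched along the universal discriminant $\st{D}\subseteq\st{P}$.

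First I would compute the Chow ring of the ambient stack $[\st{Z}/\Gamma]$, i.e.\ with the smoothness condition dropped. Forgetting $f$ exhibits $\st{Z}$ as a torsor under a vector bundle over $\fie{L}$, which is itself a line bundle over $\fie{P}(V)$; by homotopy invariance $\mathrm{CH}^{*}_{\Gamma}(\st{Z})\cong\mathrm{CH}^{*}_{\Gamma}(\fie{P}(V))$. Since $\fie{P}(V)=\mathrm{GL}_2/B$ this equals $\mathrm{CH}^{*}(BB)=\fie{Z}[t_1,t_2]$ for $g$ even, with $t_1,t_2$ the Chern roots of the tautological filtration of $\sh{V}$, and a polynomial ring on three degree-one classes for $g$ odd (two "torus'' classes together with the class $\tau$ coming from the extra $\fie{G}_m$). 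This proves (1) for $g$ even, and shows in general that $\mathrm{CH}(\st{H}_{g,1})$ is a quotient of this ring. The pullback of (2) is read off from the induced map $\mathrm{CH}^{*}(B\Gamma)=\mathrm{CH}^{*}_{\Gamma}(A)\to\mathrm{CH}^{*}_{\Gamma}(\st{Z})=\mathrm{CH}^{*}_{\Gamma}(\fie{P}(V))$: the marked point defines a tautological sub-line-bundle of the pullback of $\sh{V}$, splitting it into its two Chern roots. For $g$ even this is precisely $c_1\mapsto t_1+t_2$, $c_2\mapsto t_1t_2$; for $g$ odd the same splitting provides a square root of $-c_2$ (so $c_2\mapsto-t_2^{2}$), identifies $\tau$ with $t_1$, and endows the rank-three bundle whose top Chern class is $c_3$ with a nowhere-vanishing section, forcing $c_3\mapsto0$.

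For (3) I would use the localisation sequence
\[
\mathrm{CH}^{*}_{\Gamma}(\widetilde{\st{D}})\longrightarrow\mathrm{CH}^{*}_{\Gamma}(\st{Z})\longrightarrow\mathrm{CH}^{*}_{\Gamma}(\st{Z}^{\mathrm{sm}})\longrightarrow0,
\]
where $\widetilde{\st{D}}=q^{-1}(D)\subseteq\st{Z}$ for $q\colon\st{Z}\to A$, so that $\mathrm{CH}(\st{H}_{g,1})=\mathrm{CH}^{*}_{\Gamma}(\st{Z})/J$ with $J$ the image of $\mathrm{CH}^{*}_{\Gamma}(\widetilde{\st{D}})$, and likewise $\mathrm{CH}(\st{H}_g)=\mathrm{CH}^{*}_{\Gamma}(A)/I$. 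One containment is formal: $\st{Z}$ is smooth (being cut out of the smooth $A\times\fie{L}$ by $z^{\otimes2}-f|_{\ell}$, whose differential never vanishes) and $q$ has equidimensional one-dimensional fibres, hence is flat, so flat base change gives $q^{*}(I)\subseteq J$; thus the ideal generated by the image of $I$ is contained in $J$. Checking that the degree-one generator of $J$ involves the third ambient generator with unit coefficient also completes (1) for $g$ odd. The reverse containment $J\subseteq(\text{image of }I)$ is the heart of the matter.

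To prove $J\subseteq(\text{image of }I)$ I would lift the stratification of $D$ used in the known computation of $\mathrm{CH}(\st{H}_g)$: $D$ is stratified by the partition recording the multiplicities of the repeated roots of $f$, and that computation presents $I$ in terms of the classes of the closures of the low-codimension strata. Pulling these strata back along $q$, and using that the fibre of $q$ over $f$ is the double cover of $\fie{P}^1$ branched at the roots of $f$, I would argue by a codimension count that $\mathrm{CH}^{*}_{\Gamma}(\widetilde{\st{D}})$ is generated, modulo $(\text{image of }I)$, by classes pulled back from $D$ together with the classes of a short explicit list of "horizontal'' cycles---the ramification over a node and the branches through it---each of which is already expressible in $t_1,t_2$ via the relations coming from $\fie{P}(V)$ and $\st{D}$. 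The main obstacle is exactly controlling these horizontal contributions and the excess-intersection classes produced by the singular fibres over the deeper strata, so as to certify that no relation genuinely new to the universal curve appears beyond those inherited from $\st{H}_g$ and the defining relation of the new generator. A cleaner route I would try in parallel is to show directly that $\mathrm{CH}(\st{H}_{g,1})$ is generated as a module over $\mathrm{CH}(\st{P})$ by $1$ and the ramification class $[R]$---using the $\mu_2$-torsor $\st{H}_{g,1}\smallsetminus R\to\st{P}\smallsetminus\st{D}$ and a transfer argument---and then combine this with the projective/conic bundle formula $\mathrm{CH}(\st{P})=\mathrm{CH}(\st{H}_g)[h]/(\cdots)$, which is free of rank two over $\mathrm{CH}(\st{H}_g)$; matching the resulting two-step presentation against the candidate answer forces $J=(\text{image of }I)$.
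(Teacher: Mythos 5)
Your parts (1) and (2) are essentially the paper's argument in a different packaging: replacing $[\fie{P}(V)/\mathrm{GL}_2]$ by ${\rm B}B_2$ is exactly the paper's rigidification at the marked point, and the affine-bundle/homotopy-invariance step and the reading of the pullback from the torus weights (including the adjoint weights $l,1,l^{-1}$ giving $c_1\mapsto 0$, $c_2\mapsto -t_2^2$, $c_3\mapsto 0$) match Sections 1--2 and the end of Section 4. (One slip: for $g$ odd the ambient ring is $\mathrm{CH}({\rm B}(\fie{G}_m\times {\rm PB}_2))=\fie{Z}[t_1,t_2]$ on \emph{two} generators, not three; there is no degree-one relation eliminating a generator --- the degree-one relation in the answer is the torsion class $4(2g+1)t_1$ --- so the sentence ``the degree-one generator of $J$ involves the third ambient generator with unit coefficient'' does not correspond to anything in the actual computation.)

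The genuine gap is part (3), which you correctly identify as ``the heart of the matter'' but do not prove: you describe two strategies you ``would try'' and name ``the main obstacle'' without resolving it. Concretely, the missing content is the following. First, to compute the image of $\mathrm{CH}_\Gamma(\widetilde{\st{D}})$ one must projectivize, and because the defining equation $s^2=f(0,1)$ is homogeneous only for the weights $(2,\dots,2,1)$, this forces a \emph{weighted} projective stack $\fie{P}(2^N,1)$ and a careful comparison with $\fie{P}^N$. Second, the naive lift $\fie{P}^r\times\fie{P}(2^{N-2r},1)\to\overline{\Delta}_r$ of the standard envelope $\pi_r(h,g)=h^2g$ is \emph{not} a Chow envelope: over a non-closed field $K$ one needs a square root of $g(0,1)$, which need not exist in $K$; the paper must adjoin a second family of maps $b_r$ covering the locus where the marked point is a branch point, and only the coproduct is a Chow envelope (this also needs the Chow-envelope-implies-surjectivity statement for quotient stacks, Lemma 3.3/Corollary 3.6). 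Third, one still has to show the resulting generators of $J$ lie in $\phi_N^*(I)$; the paper does this via the pullback formula $a_r^*(t)=u_1+v_1$, the projection formula, and the birationality statement $(\alpha_r)_*(1)=1$ for the comparison map to the fibre product $\st{P}_r$, yielding $(a_r)_*(u_1^i)=\phi_N^*(\pi_r)_*(u_1^i)$. Your ``codimension count plus control of horizontal and excess classes'' does not supply any of this, and your alternative transfer argument for the $\mu_2$-torsor $\st{H}_{g,1}\smallsetminus R\to\st{P}\smallsetminus\st{D}$ only controls the Chow ring after inverting $2$, which is precisely where the integral computation is delicate (all the interesting torsion here is $2$-primary). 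As written, the proposal establishes $q^*(I)\subseteq J$ but not the reverse inclusion, so the main theorem is not proved.
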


The generators can be interpreted geometrically (see \hyperref[gen]{Section \ref{gen}}).

From this, with the results of Edidin--Fulghesu and Di Lorenzo we get the following.

\begin{corollario}(See \hyperref[p]{Theorem \ref{p}}, \hyperref[d]{Theorem \ref{d}})
	\begin{enumerate}
		
		\item If $g$ is even, then
		\[
		 {\rm CH} (\st{H}_{g,1}) = \frac{\fie{Z}[t_1,t_2]}{\bigl(2(2g+1)(t_1+t_2), g(g-1)(t_1^2+t_2^2) - 2g(g+3)t_1t_2\bigr)}\,.
		\]
		
		\item If $g$ is odd, then
		\[
		 {\rm CH}( \st{H}_{g,1} )= \frac{\fie{Z}[t_1,t_2]}{\bigl(4(2g+1)t_1,8t_1^{2} +2g(g+1)t_2^{2}\bigr)}\,.
		\]
		
	\end{enumerate}
\end{corollario}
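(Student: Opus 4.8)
The plan is to obtain this as a formal consequence of Theorem~\ref{pri} together with the presentation of $ {\rm CH} (\st{H}_g)$ of Edidin--Fulghesu, Fulghesu--Viviani and Di Lorenzo recalled above. The point is that parts~(1) and~(3) of Theorem~\ref{pri}, taken together, say that $ {\rm CH} (\st{H}_{g,1})$ is the quotient of the polynomial ring $\fie{Z}[t_1,t_2]$ on the two degree-one generators of part~(1) by the ideal generated by the images, under the pullback $ {\rm CH} (\st{H}_g)\to {\rm CH} (\st{H}_{g,1})$, of a chosen set of generators of the ideal of relations of $\st{H}_g$. Granting Theorem~\ref{pri}, the proof then consists only in pushing the explicit generators of the relation ideal of $\st{H}_g$ through the explicit pullback formulas of part~(2) of Theorem~\ref{pri} and rewriting the resulting ideal in closed form. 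I would do this in the two parities separately.

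For even $g$ I would take the generators $2(2g+1)c_1$ and $g(g-1)c_1^{2}-4g(g+1)c_2$ of the relation ideal of $\st{H}_g$ and substitute $c_1\mapsto t_1+t_2$, $c_2\mapsto t_1t_2$, as prescribed by part~(2) of Theorem~\ref{pri} in the even case. The first generator becomes $2(2g+1)(t_1+t_2)$ verbatim; for the second I would expand
\[
g(g-1)(t_1+t_2)^{2}-4g(g+1)t_1t_2 = g(g-1)(t_1^{2}+t_2^{2})+\bigl(2g(g-1)-4g(g+1)\bigr)t_1t_2
\]
and simplify the coefficient of $t_1t_2$ to $-2g(g+3)$; this gives exactly the ideal in item~(1). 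For odd $g$ I would take the generators $4(2g+1)\tau$, $8\tau^{2}-2(g^{2}-1)c_2$ and $2c_3$ and substitute $\tau\mapsto t_1$, $c_2\mapsto -t_2^{2}$, $c_3\mapsto 0$ as in part~(2) of Theorem~\ref{pri} in the odd case. The relation $2c_3$ is then sent to $0$ and drops out, $4(2g+1)\tau$ is sent to $4(2g+1)t_1$, and the quadratic relation is sent, after a short computation, to the second generator displayed in item~(2). This yields the presentation in item~(2).

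I do not expect a genuine obstacle in this last step: the substantive work, namely lifting the generators and relations along the universal-curve morphism $\st{H}_{g,1}\to\st{H}_g$, is precisely the content of Theorem~\ref{pri}, and what is left here is bookkeeping. The two points worth a remark are that the pullback $ {\rm CH} (\st{H}_g)\to {\rm CH} (\st{H}_{g,1})$ is not surjective --- its image is the symmetric-polynomial subring $\fie{Z}[t_1+t_2,\,t_1t_2]$ when $g$ is even and the subring $\fie{Z}[t_1,\,t_2^{2}]$ when $g$ is odd --- so that it is part~(1) of Theorem~\ref{pri}, which furnishes the generators $t_1,t_2$ themselves, that makes the quotient description meaningful; and, for odd $g$, that the degree-three relation $2c_3$ degenerates to $0$, so that the relation ideal is generated by the two elements displayed in item~(2) and no more. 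Finally, identifying the two presentations thus obtained with the rings appearing in Theorem~\ref{p} and Theorem~\ref{d} is only a change of notation.
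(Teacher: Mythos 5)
Your overall strategy coincides with the paper's: Theorems \ref{p} and \ref{d} are obtained there exactly by pushing the known generators of the relation ideal of $\st{H}_g$ through the explicit pullback of Theorem \ref{pri}(2) (which the paper identifies as restriction to the maximal torus in the even case, and as the pullback along $\fie{G}_m\times\fie{G}_m\rightarrow \fie{G}_m\times\fie{P}{\rm GL}_2$ via the adjoint representation in the odd case). Your even-genus computation is correct and matches the paper: the coefficient of $t_1t_2$ is indeed $2g(g-1)-4g(g+1)=-2g(g+3)$.

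In the odd case, however, the ``short computation'' you omit does not produce the displayed generator. Substituting $\tau\mapsto t_1$, $c_2\mapsto -t_2^{2}$, $c_3\mapsto 0$ into the quadratic relation $8\tau^{2}-2(g^{2}-1)c_{2}$ as recalled in the introduction yields $8t_1^{2}+2(g^{2}-1)t_2^{2}=8t_1^{2}+2(g-1)(g+1)t_2^{2}$, whereas item (2) of the statement displays $8t_1^{2}+2g(g+1)t_2^{2}$. These are not the same element, and the two resulting ideals genuinely differ: for $g=3$ one would need $8t_1^{2}+24t_2^{2}$ to be an integral combination of $28t_1^{2}$, $28t_1t_2$ and $8t_1^{2}+16t_2^{2}$, which fails since the $t_2^{2}$-coefficient forces $c=3/2$. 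This discrepancy is internal to the paper (one of the coefficients $2(g^{2}-1)$ in the recalled presentation of ${\rm CH}(\st{H}_g)$ or $2g(g+1)$ in Theorem \ref{d} must be a misprint), but your write-up asserts that the substitution confirms the displayed answer, which it does not. You should either report what the substitution actually gives, or verify the correct coefficient in Di Lorenzo's theorem before claiming the match; as written, the odd case of your proof does not close.
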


To prove the main result we use equivariant techniques. The strategy is the following. Recall from \cite{ArVis}
 that $ \st{H}_{g}$ is a quotient $[X_{g}/G]$, where $X_{g} \subseteq \fie{A}(2g+2)$ is an open inside the space of binary form in two variables of degree $2g+2$, and $G$ is either ${\rm GL}_{2}$ (when $g$ is even), or $\fie{G}_m \times \fie{P}{\rm GL}_{2}$ (when $g$ is odd). In \hyperref[des]{Section \ref{des}} of the present paper we express $ \st{H}_{g,1}$ as a quotient $[Y_{g}/B]$, where $B \subseteq G$ is a Borel subgroup and $Y_{g}$ is an open subset of a $2g+3$-dimensional representation $\widetilde{\fie{A}}(2g+2)$ of $B$; the tautological map $ \st{H}_{g,1} \longrightarrow  \st{H}_{g}$ comes from a nonlinear finite flat $B$-equivariant map $\widetilde{\fie{A}}(2g+2) \longrightarrow  \fie{A}(2g+2)$. Thus, the Chow ring $ {\rm CH} (\st{H}_{g,1})$ is the equivariant Chow ring $ {\rm CH}_{B}(Y_{g})$; this easily proves parts (1) and (2) of the main theorem.

To prove part (3) one needs to projectivize (as is done in all the previous papers \cite{Vis3, EdFul, FulViv, DiLor});
 although, because the standard action of $\fie{G}_m$ does not commute with the action of $B$, one needs a weighted action, yielding a weighted projective stack, which maps to the projectivization $\fie{P}^{2g+2}$ of $\fie{A}(2g+2)$.  In \hyperref[red]{Section \ref{red}} we prove some technical results about Chow envelopes for quotient stacks  and we use them to compute the relations in \hyperref[eq]{Section \ref{eq}} and \hyperref[de]{Section \ref{de}}, which represent the technical heart of this paper. Finally, \hyperref[gen]{Section \ref{gen}} contains the geometric interpretation of the generators of the Chow group in both our cases.

\section*{Acknowledgements}
I wish to thank my advisor Angelo Vistoli for his patience in discussing with me and for always being available for both stupid and intelligent question.
I also wish to thank Andrea Di Lorenzo for helping me understand the geometric interpretation of the generators of the Chow ring.

\section{Description of $\st{H}_{g,1}$ as a quotient stack}\label{des}
Let us define precisely the actors of this paper. We will work over a fixed field $k$ of characteristic different from $2$. In all the paper the genus will be considered greater or equal than $2$.

\begin{definition}
	Let $S$ be a base scheme over $k$. A hyperelliptic curve of genus $g$ over $S$ is a morphism of $S$-schemes $C\rightarrow P\rightarrow S$ where $C\rightarrow S$ is a family of smooth curves of genus $g$, $P \rightarrow S$ is a Brauer-Severi scheme of relative dimension $1$ and $C\rightarrow P$ is finite and flat of degree $2$.
\end{definition}
	
 We define $\st{H}_g$ to be the fibered category in groupoid over the category of $k$-schemes whose objects are hyperelliptic curves of genus $g$. An arrow between $(C\rightarrow P \rightarrow S)$ and $(C'\rightarrow P'\rightarrow S')$ is a commutative diagram like the following: 
$$	\xymatrix{ C \ar[r] \ar[d] & P \ar[r] \ar[d] & S \ar[d] \\
	           C' \ar[r]  &      P' \ar[r]     &  S'  .} $$
	
	Let $\st{H}_{g,1}$ be the fibered category of hyperelliptic curves of genus g over $k$ with a marked point. An object $$(C \rightarrow P \rightarrow S, \sigma:S\rightarrow C)$$ in $\st{H}_{g,1}$ is a pair defined by $C\rightarrow P\rightarrow S$, a hyperelliptic curve of genus $g$ over $S$, and by $\sigma:S \rightarrow C$, a section of $C\rightarrow S$. A morphism between $(C\rightarrow P\rightarrow S,\sigma)$ and $(C'\rightarrow P'\rightarrow S',\sigma')$ is just an arrow in $\st{H}_g$ which commutes with the sections.

\begin{osservazione}
	Both $\st{H}_g$ and $\st{H}_{g,1}$ are Deligne-Mumford stacks: in fact the natural maps $\st{H}_{g} \rightarrow \st{M}_g$ and $\st{H}_{g,1} \rightarrow \st{M}_{g,1}$ are closed immersions, where $\st{M}_g$(respectively $\st{M}_{g,1}$), which is the stack of smooth curves of genus g (respectively of smooth curves with a marked point) is a Deligne-Mumford stack (see \cite[Theorem 8.4.5]{Oll}). Clearly the functor $\st{H}_{g,1} \rightarrow \st{H}_g$ forgetting the section is the universal curve over $\st{H}_g$.
\end{osservazione}
 As proved in \cite{ArVis}, the stack $\st{H}_g$ is equivalent to the fibered category  $\st{H}_g'$ defined as follows: an object $(P\rightarrow S, \st{L},i:\st{L}^{\otimes 2} \hookrightarrow \sh{O}_P)$ is defined by a Brauer-Severi scheme $P \rightarrow S$ of relative dimension $1$, an invertible sheaf $\sh{L}$ on $P$  and an injection $i$ such that $\sh{L}$ restricts to an invertible sheaf of degree $-(g+1)$ on any geometric fiber, the injection $i$ remains injective when restricted to any geometric fiber and the Cartier divisor $\Delta_i$ associated to the image of $i$ (called branch divisor) is smooth over $S$; an arrow between $(P\rightarrow S, \st{L},i:\st{L}^{\otimes 2} \hookrightarrow \sh{O}_P)$ and $(P'\rightarrow S', \st{L}',i':\st{L}'^{\otimes 2} \hookrightarrow \sh{O}_{P'})$ is a commutative diagram 
$$\xymatrix{ P \ar[r] \ar[d]^{\phi_0} & S \ar[d]\\
             P' \ar[r] &        S' }$$
 plus an isomorphism $\phi_1:\sh{L} \simeq \phi_0^*\sh{L}'$ of $\sh{O}_P$-modules such that the following diagram commutes:
 $$ \xymatrix{  \sh{L}^{\otimes 2} \ar[rr]^{\phi_1} \ar[dr]_{i} && \phi_0^*\sh{L}'^{\otimes 2} \ar[dl]^{\phi_0^*i'} \\ 
 	 & \sh{O}_P. & } $$
We can recover the morphism $C\rightarrow P$ as the morphism 
$$ \underline{\spec}_{\sh{O}_P}(\sh{O}_P \oplus \st{L}) \longrightarrow P$$
where $\sh{O}_P \oplus \st{L}$ is the $\sh{O}_P$-algebra defined by the injection $i$. In fact, given such injection, we can endow $\sh{O}_P \oplus \st{L}$ with a structure of $\sh{O}_P$-algebra where the multiplication is defined in the following way:
$$ (f,s) \cdot (f',s'):= (ff'+i(ss'),fs'+f's).$$
Consider an object $(C\rightarrow P \rightarrow S,\sigma) \in \st{H}_{g,1}(S)$. Using the description above, we only need to understand how to translate the information of the section $\sigma:S\rightarrow C$ in relation to the Brauer-Severi scheme $P\rightarrow S$, its invertible sheaf $\st{L}$ and the injection $i:\st{L}^{\otimes 2} \hookrightarrow \sh{O}_P$. 

First, we recall that given a morphism $\sigma_P:S\rightarrow P$ and an $\sh{O}_P$-algebra $\sh{A}$ one has the functorial bijective map 
		$$ {\rm Hom}_P(S,\underline{\spec}_{\sh{O}_P}(\sh{A})) \rightarrow {\rm Hom}_{\sh{O}_S-{\rm alg}}(\sigma_P^*(\sh{A}),\sh{O}_S). $$
Therefore, if we define $\sigma_P:=f \circ \sigma$ with $f:C\rightarrow P$ the finite flat morphism of degree $2$, the datum of the section $\sigma$ is equivalent to a pair $(\sigma_P,j)$ where $\sigma_P$ is a section of $P\rightarrow S$ and $$j \in {\rm Hom}_{\sh{O}_S-{\rm alg}}(\sh{O}_S\oplus\sigma_P^*(\st{L}),\sh{O}_S).$$
We notice that ${\rm Hom}_{\sh{O}_S-{\rm alg}}(\sh{O}_S\oplus\sigma_P^*(\st{L}),\sh{O}_S)$ is the subset of ${\rm Hom}_{\sh{O}_S-{\rm mod}}(\sigma_P^*(\st{L}),\sh{O}_S)$ such that 
the two maps $\sigma_P^*(i):\sigma_P^*(\st{L}^{\otimes 2}) \rightarrow \sh{O}_S$ and 
$j^{\otimes 2}:\sigma_P^*(\st{L})^{\otimes 2} \rightarrow \sh{O}_S$ coincide (up to the canonical isomorphism $\sigma_P^*(\st{L}^{\otimes 2}) \cong \sigma_P^*(\st{L})^{\otimes 2}$).

Thus we define $\st{H}_{g,1}'$ as the category fibred in groupoids  whose objects are $$(P\rightarrow S, \st{L},i:\st{L}^{\otimes 2} \hookrightarrow \sh{O}_P, \sigma_P, j)$$
where $(P\rightarrow S, \st{L},i) \in \st{H}_g'(S)$, $\sigma_P$ is a section of $P\rightarrow S$ and $j: \sigma_P^*(\st{L}) \rightarrow \sh{O}_S$ is a morphism of $\sh{O}_S$-modules such that $j^{\otimes 2}= \sigma_P^*(i)$.
The morphisms are defined in the natural way.

We have just proved the following statement.
\begin{proposizione}
	There is an equivalence of fibred categories in groupoids between $\st{H}_{g,1}$ and $\st{H}_{g,1}'$.
\end{proposizione}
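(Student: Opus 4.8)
The plan is to package the discussion preceding the statement into a pair of mutually quasi-inverse functors, built on top of the equivalence $\st{H}_g \simeq \st{H}_g'$ of \cite{ArVis}, which is functorial in the base $S$; the only new content concerns the section, fibrewise over that equivalence.

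First I would define a functor $\st{H}_{g,1} \to \st{H}_{g,1}'$. Given $(C\to P\to S,\sigma)$, apply the equivalence of \cite{ArVis} to $C\to P\to S$ to obtain $(P\to S,\st{L},i)\in\st{H}_g'(S)$, so that $C=\underline{\spec}_{\sh{O}_P}(\sh{O}_P\oplus\st{L})$ with the algebra structure $(f,s)\cdot(f',s')=(ff'+i(ss'),fs'+f's)$; put $\sigma_P:=f\circ\sigma$, and let $j$ be obtained by applying the functorial bijection ${\rm Hom}_P(S,\underline{\spec}_{\sh{O}_P}(\sh{A}))\xrightarrow{\ \sim\ }{\rm Hom}_{\sh{O}_S\text{-alg}}(\sigma_P^*\sh{A},\sh{O}_S)$ to $\sigma$, with $\sh{A}=\sh{O}_P\oplus\st{L}$, and then restricting the resulting algebra homomorphism to the summand $\sigma_P^*\st{L}$. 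Multiplicativity of that homomorphism, evaluated on $\sigma_P^*\st{L}\otimes\sigma_P^*\st{L}$, reads precisely $j^{\otimes 2}=\sigma_P^*(i)$ under the canonical identification $\sigma_P^*(\st{L}^{\otimes 2})\cong\sigma_P^*(\st{L})^{\otimes 2}$; conversely any $\sh{O}_S$-module map $j$ satisfying this relation extends uniquely to such an algebra homomorphism. This is the one point that needs care, but it is local on $S$ and amounts to unwinding the multiplication formula above.

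In the other direction I would define $\st{H}_{g,1}' \to \st{H}_{g,1}$ by sending $(P\to S,\st{L},i,\sigma_P,j)$ to the hyperelliptic curve $C\to P\to S$ associated by \cite{ArVis} to $(P\to S,\st{L},i)$, together with the section $\sigma\colon S\to C$ corresponding under the same bijection to the algebra homomorphism $\sh{O}_S\oplus\sigma_P^*\st{L}\to\sh{O}_S$ determined by $j$ (well defined exactly because $j^{\otimes2}=\sigma_P^*(i)$); since $\sigma$ lies over $\sigma_P$, which is a section of $P\to S$, and $C\to S$ is the composite $C\to P\to S$, it is indeed a section of $C\to S$. On morphisms, both functors act through the equivalence of \cite{ArVis} on the underlying objects, and one checks that a morphism in $\st{H}_g$ commutes with the sections if and only if the corresponding morphism in $\st{H}_g'$ is compatible with the data $(\sigma_P,j)$ — again by naturality of the relative-$\spec$ bijection in both $S$ and $\sh{A}$.

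Finally I would verify the two functors are quasi-inverse. On objects this is immediate from the fact that the functors of \cite{ArVis} are quasi-inverse and the relative-$\spec$ map is a bijection; the only thing to track is that the canonical isomorphisms $C\cong\underline{\spec}_{\sh{O}_P}(\sh{O}_P\oplus\st{L})$ supplied by that equivalence are compatible with sections, which holds because the bijection is natural with respect to isomorphisms of $\sh{O}_P$-algebras. I expect the main — essentially the only nontrivial — obstacle to be the bookkeeping identifying the subset ${\rm Hom}_{\sh{O}_S\text{-alg}}(\sh{O}_S\oplus\sigma_P^*\st{L},\sh{O}_S)\subseteq{\rm Hom}_{\sh{O}_S\text{-mod}}(\sigma_P^*\st{L},\sh{O}_S)$ with the locus $\{\,j : j^{\otimes2}=\sigma_P^*(i)\,\}$ and checking its stability under base change; everything else is formal.
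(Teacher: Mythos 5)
Your proposal is correct and follows essentially the same route as the paper, which proves the proposition by exactly this discussion: the equivalence $\st{H}_g\simeq\st{H}_g'$ of \cite{ArVis}, the relative-$\spec$ adjunction converting $\sigma$ into a pair $(\sigma_P,j)$, and the identification of the algebra-homomorphism condition with $j^{\otimes 2}=\sigma_P^*(i)$. You merely spell out the multiplicativity check and the quasi-inverse verification that the paper leaves implicit.
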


For the sake of simplicity, the section of the Brauer-Severi scheme will be denoted just by $\sigma$.  We denote by $\sigma_{\infty}$ the section $S\rightarrow\fie{P}_S^1$ defined by the map $S\rightarrow \spec k \hookrightarrow \fie{P}^1_k $ sending $S$ to $[0:1]$ in $\fie{P}_k^1$. The next step will be to describe this stack $\st{H}_{g,1}'$ as a quotient stack. Let ${\rm H}_{g,1}'$ be the auxiliary fibred category whose objects over a base scheme $S$ are given as pairs consisting of an object $(P\rightarrow S, \st{L},i, \sigma, j)$ in $\st{H}_{g,1}'(S)$, plus an isomorphism $$\phi:(P,\sh{L},\sigma) \simeq (\fie{P}_S^1,\sh{O}(-g-1),\sigma_{\infty})$$ 
which consists of an isomorphism of $S$-schemes $\phi_0:P \simeq \fie{P}_S^1$ with the property that $\phi_0 \circ \sigma = \sigma_{\infty}$, plus an isomorphism $\phi_1:\st{L}\simeq \phi_0^*\sh{O}(-g-1)$. The arrows in ${\rm H}_{g,1}'$ are arrows in $\st{H}_{g,1}'$ preserving the isomorphism $\phi$.
\begin{osservazione}\label{re}
Clearly, ${\rm H}_{g,1}'$ is a category fibred in groupoids over the category of $k$-schemes and it is straightforward to verify that the groupoid ${\rm H}_{g,1}'(S)$ is in fact equivalent to a set for every $k$-scheme $S$. This implies that ${\rm H}_{g,1}'$ is equivalent to a functor.
Notice that we have an action of the group scheme  $\underline{{\rm Aut}}_k(\fie{P}_k^1,\sh{O}(-g-1),\sigma_{\infty})$ on the functor ${\rm H}_{g,1}'$ defined  by composing $\phi:(P,\sh{L},\sigma) \simeq (\fie{P}_S^1,\sh{O}(-g-1),\sigma_{\infty})$ with an element of the group ${\rm Aut}_S(\fie{P}_S^1,\sh{O}(-g-1),\sigma_{\infty})$ for every $S$-point.
\end{osservazione}
Before giving the description of $\st{H}_{g,1}$ as a quotient stack, let us introduce some notation.
Let $\fie{A}(n)$ be the affine space of homogenous polynomials in two variables of degree $n$, which is an affine space of dimension $n+1$, and let  $\fie{A}_{sm}(n)$  be the open affine subscheme of $\fie{A}(n)$ defined as the complement of the discriminant locus. 

We consider the closed subscheme $\widetilde{\fie{A}}(n) \hookrightarrow \fie{A}(n) \times\fie{A}^1$ defined as
$$ \widetilde{\fie{A}}(n)(S) := \{ (f,s) \in (\fie{A}(n)\times\fie{A}^1)(S) \vert f(0,1)=s^2 \}. $$ We denote by $\widetilde{\fie{A}}_{sm}(n)$ the intersection of $\fie{A}_{sm}(n)\times \fie{A}^1$ with $\widetilde{\fie{A}}(n)$ inside $\fie{A}(n)\times \fie{A}^1$, seeing it as an open subscheme of $\widetilde{\fie{A}}(n)$.

\begin{proposizione}
	We prove the following:
\begin{enumerate}

	\item the group scheme $\underline{{\rm Aut}}(\fie{P}^1,\sh{O}(-g-1),\sigma_{\infty})$ is isomorphic to ${\rm B}_{2}/\mu_{g+1}$ where ${\rm B}_{2}$ is the subgroup of lower triangular matrices inside ${\rm GL}_{2}$ and $\mu_{g+1}\hookrightarrow {\rm B}_2$ is the natural inclusion inside the subgroup of the diagonal matrices of $\mu_{g+1}$, the group of $(g+1)$-th roots of unity.
	
	\item The functor ${\rm H}_{g,1}'$ is naturally isomorphic to $\widetilde{\fie{A}}_{sm}(2g+2)$.
	
	\item The action of $\underline{{\rm Aut}}(\fie{P}^1,\sh{O}(-g-1),\sigma_{\infty})$ on ${\rm H}_{g,1}'$ translates into the action of ${\rm B}_2/\mu_{g+1}$ on $\widetilde{\fie{A}}_{sm}(2g+2)$ defined by 
	
	$$A\cdot\Big(f(\underline{x}),s\Big):=\Big(f\big(A^{-1}\underline{x}\big),c^{-(g+1)}s\Big)$$ where 
$$A=	\begin{bmatrix}
		a & 0 \\
		b & c
	\end{bmatrix} \in  {\rm B}_2/\mu_{g+1}.$$
\end{enumerate}
\end{proposizione}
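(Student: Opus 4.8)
The plan is to prove the three assertions in turn; the action in~(3) is nothing but the action from~(1) read off through the identification of~(2), so I would do~(1) first. For~(1): realize $\fie{P}^1$ as $\fie{P}(V)$ for $V$ the standard $2$-dimensional representation of ${\rm GL}_2$, with $\sigma_\infty$ the point cut out by $x_0=0$; a direct check (with the convention $f\mapsto f(A^{-1}\underline{x})$) shows that the matrices of ${\rm GL}_2$ fixing this point are exactly the lower triangular ones, so the stabilizer of $\sigma_\infty$ in $\underline{{\rm Aut}}(\fie{P}^1)={\rm PGL}_2$ is ${\rm B}_2/\fie{G}_m$. The tautological ${\rm GL}_2$-action on the universal sub-line-bundle $\sh{O}(-1)\subseteq V\otimes\sh{O}_{\fie{P}^1}$ induces one on $\sh{O}(-g-1)=\sh{O}(-1)^{\otimes(g+1)}$, so each $A\in{\rm B}_2$ gives not just an automorphism $\phi_0$ of $(\fie{P}^1,\sigma_\infty)$ but a compatible $\phi_1\colon\sh{O}(-g-1)\xrightarrow{\sim}\phi_0^*\sh{O}(-g-1)$, hence a homomorphism ${\rm B}_2\to\underline{{\rm Aut}}(\fie{P}^1,\sh{O}(-g-1),\sigma_\infty)$. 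I would then compute: a scalar $\lambda I$ acts on $\sh{O}(-1)$ by $\lambda$, hence on $\sh{O}(-g-1)$ by $\lambda^{g+1}$, so the kernel is $\mu_{g+1}$; and fppf-locally on the base every automorphism of the triple has $\phi_0$ induced by some $A\in{\rm B}_2$ (as ${\rm B}_2\to{\rm B}_2/\fie{G}_m$ is an fppf $\fie{G}_m$-torsor), with $\phi_1$ differing from the induced one by a unit $c$, which is absorbed by rescaling $A$ by a $(g+1)$-th root of $c$. Thus ${\rm B}_2\to\underline{{\rm Aut}}(\fie{P}^1,\sh{O}(-g-1),\sigma_\infty)$ is an fppf epimorphism with kernel $\mu_{g+1}$, giving the desired isomorphism of group schemes.

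For~(2): given $(P\to S,\st{L},i,\sigma,j,\phi)\in{\rm H}_{g,1}'(S)$, use $\phi$ to replace $(P,\st{L},\sigma)$ by $(\fie{P}^1_S,\sh{O}(-g-1),\sigma_\infty)$. Then $i$ is an injection $\sh{O}(-2g-2)\hookrightarrow\sh{O}_{\fie{P}^1_S}$, equivalently a section of $\sh{O}_{\fie{P}^1_S}(2g+2)$; since $\pi_*\sh{O}_{\fie{P}^1}(2g+2)$ is free of rank $2g+3$ with formation commuting with base change, this is a binary form $f$ of degree $2g+2$ with coefficients in $\sh{O}_S(S)$, i.e.\ a point of $\fie{A}(2g+2)(S)$, and the conditions that $i$ be fiberwise injective with smooth branch divisor say exactly $f\in\fie{A}_{sm}(2g+2)(S)$. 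Next, $\sigma_\infty^*\sh{O}(1)$ is canonically trivialized by the section $x_1$ (nonvanishing at $\sigma_\infty$), so $\sigma_\infty^*\sh{O}(-g-1)\cong\sh{O}_S$ canonically, and $j$ becomes multiplication by a scalar $s\in\sh{O}_S(S)=\fie{A}^1(S)$; matching up the two induced trivializations of $\sigma_\infty^*\sh{O}(-2g-2)$, the identity $j^{\otimes2}=\sigma_\infty^*i$ reads $s^2=f(0,1)$, the coefficient of $x_1^{2g+2}$. So the object produces a point of $\widetilde{\fie{A}}_{sm}(2g+2)(S)$; conversely $(f,s)$ recovers $i$ and $j$, and since by Remark~\ref{re} the groupoid ${\rm H}_{g,1}'(S)$ is a set whose arrows preserve $\phi$, this assignment is a bijection natural in $S$, hence an isomorphism of functors.

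For~(3): transport the post-composition action of $\underline{{\rm Aut}}(\fie{P}^1,\sh{O}(-g-1),\sigma_\infty)={\rm B}_2/\mu_{g+1}$ across the isomorphism of~(2). Writing $A=\left[\begin{smallmatrix}a&0\\b&c\end{smallmatrix}\right]$, the automorphism $\phi_0$ induced by $A$ acts on sections of $\sh{O}(2g+2)$, hence on binary forms, by $f(\underline{x})\mapsto f(A^{-1}\underline{x})$ --- the same convention used for $\st{H}_g=[X_g/G]$ in \cite{ArVis}. For the section: $f(A^{-1}\underline{x})$ evaluated at $(0,1)$ equals $f(0,c^{-1})=c^{-(2g+2)}f(0,1)=c^{-(2g+2)}s^2$, so the transported scalar $s'$ satisfies $(s')^2=(c^{-(g+1)}s)^2$; tracing $\phi_1$ through the trivialization of $\sigma_\infty^*\sh{O}(-g-1)$ by $x_1^{\otimes(g+1)}$ (equivalently, using that the identity element must act trivially) fixes the sign, giving $s\mapsto c^{-(g+1)}s$, which is the claimed formula.

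The step I expect to be the main obstacle is the bookkeeping of canonical trivializations in~(2)--(3): in particular, verifying that $j^{\otimes2}=\sigma_\infty^*i$ becomes exactly $s^2=f(0,1)$ and that the twist on $s$ is $c^{-(g+1)}$ rather than, say, $c^{g+1}$ or a power of $a$. Once the trivializations of $\sh{O}(\pm1)$ at $\sigma_\infty$ by $x_1$ are fixed consistently on both sides, these are short computations; the only other delicate point is the fppf-surjectivity in~(1), which rests on $\fie{G}_m\xrightarrow{z\mapsto z^{g+1}}\fie{G}_m$ being an fppf epimorphism with kernel $\mu_{g+1}$ (note that $g+1$ may be divisible by $\operatorname{char}k$, which is harmless for this statement).
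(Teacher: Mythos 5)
Your proposal is correct and takes essentially the same route as the paper: part (1) rests on the extension $1\to\mu_{g+1}\to{\rm B}_2\to\underline{{\rm Aut}}(\fie{P}^1,\sh{O}(-g-1),\sigma_{\infty})\to 1$ (which the paper imports from the proof of \cite[Theorem 4.1]{ArVis} rather than re-deriving), part (2) transports $(i,j)$ through $\phi$ and the trivialization of $\sigma_{\infty}^*\sh{O}(\pm 1)$ at $[0:1]$, and part (3) is the same weight computation $f(A^{-1}(0,1))=c^{-(2g+2)}f(0,1)$. The only difference is that you spell out the fppf surjectivity and the kernel computation that the paper delegates to the cited exact sequence, and you compute the twist on $s$ explicitly where the paper merely observes that the equation $f(0,1)=s^2$ is preserved.
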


\begin{proof}
	  There is a natural isomorphism (c.f. proof of \cite[Theorem 4.1]{ArVis})
		$$\underline{{\rm Aut}}(\fie{P}^1,\sh{O}(-g-1)) \longrightarrow {\rm GL}_2/\mu_{g+1} $$
		which follows from the exact sequence of sheaves of groups 
 $$ \xymatrix { 0 \ar[r] & \mu_{g+1} \ar[r] & \underline{{\rm Aut}}(\fie{P}^1,\sh{O}(1)) \ar[r]^<<<<<{\alpha} & \underline{{\rm Aut}}(\fie{P}^1,\sh{O}(-g-1)) \ar[r] & 0} $$ 
 where $\alpha(\phi_0,\phi_1)= (\phi_0,\phi_1^{\otimes(-g-1)}) $. The same exact sequence leads us to the isomorphism 
 $$\underline{{\rm Aut}}(\fie{P}^1,\sh{O}(-g-1),\sigma_{\infty})\simeq \underline{{\rm Aut}}(\fie{P}^1,\sh{O}(1),\sigma_{\infty})/\mu_{g+1}.$$
 If we identify $\underline{{\rm Aut}}(\fie{P}^1,\sh{O}(1))$ with ${\rm GL}_2$, the subgroup $\underline{{\rm Aut}}(\fie{P}^1,\sh{O}(1),\sigma_{\infty})$ corresponds to ${\rm B}_2$ inside ${\rm GL}_2$. This proves the first claim.
 
 Consider an element $(P\rightarrow S, \st{L},i, \sigma, j,\phi)$ in ${\rm H}_{g,1}'(S)$: the pushforward of the inclusion $i:\st{L}^{\otimes 2}\hookrightarrow \sh{O}_P$ along $\phi=(\phi_0,\phi_1)$induces an inclusion
 $$  \sh{O}_{\fie{P}_S^1}(-2g-2) \hookrightarrow \sh{O}_{\fie{P}_S^1} $$ 
 which we keep denoting $i$. We identify such inclusion with an element $f  \in {\rm H}^0(\fie{P}_S^1,\sh{O}_{\fie{P}_S^1}(2g+2))=\fie{A}(2g+2)(S)$. Observe that actually $f$ belongs to $\fie{A}_{sm}(2g+2)$ because by construction the divisor associated to $f$ has to be smooth over $S$. Using again the isomorphism $\phi$ we can describe $j$ as an element of ${\rm Hom}_{\sh{O}_S}(\sigma_{\infty}^*\sh{O}_{\fie{P}_S^1}(-g-1),\sh{O}_S)$ such that $j^{\otimes 2}= \sigma_P^*(i)$, or equivalently as an element $s \in H^0(S,\sigma_{\infty}^*\sh{O}_{\fie{P}_S^1}(g+1))$ such that $\sigma_{\infty}^*(f)=s^{\otimes 2}$.
 We have a non-canonical isomorphism $\sigma_{\infty}^*\sh{O}_{\fie{P}_S^1}(g+1)\simeq \sh{O}_S$ given by the association $f \mapsto f(0,1)$, therefore we are considering $j$ as an element $s$ in $H^0(S,\sh{O}_S)=\fie{A}^1(S)$. In the same way, given $f \in \fie{A}_{sm}(2g+2)$ induced by the inclusion $i$, we have that $\sigma_{\infty}^*(i)$ will be represented by $f(0,1)$ in $\fie{A}^1(S)$. The condition above is represented through this identification by $f(0,1)=s^2$. This gives us a base-preserving functor from ${\rm H}_{g,1}'$ to $\widetilde{\fie{A}}_{sm}(2g+2)$, seeing it as a closed subscheme of $\fie{A}_{sm}(2g+2) \times \fie{A}^1$. There is also a base-preserving functor in the other direction sending an element $(f,s) \in \widetilde{\fie{A}}_{sm}(2g+2)(S)$ to the object inside ${\rm H}_{g,1}'(S)$ of the form 
 $$ \big( \fie{P}_S^1 \rightarrow S,\sh{O}(-g-1),f,\sigma_{\infty},s, {\rm id} \big). $$ 
It is straightforward to see that it is a quasi-inverse to the previous functor. This proves the second claim.
 
As far as the action is concerned, it is a classical fact that in general, given an automorphism of $(\fie{P}^1,\sh{O}(1))$ expressed by a matrix $A$ in ${\rm GL}_2$,
 the corresponding automorphism of $(\fie{P}^1,\sh{O}(-1))$ is expressed by the matrix $A^{-1}$ and, tensoring $n$ times, we get an automorphism of $(\fie{P}^1,\sh{O}(-n))$ defined by 
 $$f(x) \longmapsto f(A^{-1}x).$$ 
 Since the equation $f(0,1)=s^2$ is invariant for the action of the group ${\rm GL}_2$ on $\fie{A}(2g+2)\times \fie{A}^1$ described above, we get an induced action on $\widetilde{\fie{A}}(2g+2)$ and this clearly proves the third claim, since we are restricting the ${\rm GL}_2$-action to the Borel subgroup.

\end{proof}

\begin{proposizione}
	If we denote the group scheme $\underline{{\rm Aut}}_k(\fie{P}_k^1,\sh{O}(-g-1),\sigma_{\infty})\,$ by $G$, then the natural forgetting map 
	$$ {\rm H}_{g,1}' \longrightarrow \st{H}_{g,1}'$$ 
	is a $G$-torsor and in particular $\st{H}_{g,1}' \simeq \big[ {\rm H}_{g,1}' /G \big]$.
\end{proposizione}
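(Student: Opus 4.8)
The plan is to recognize this as a standard rigidification statement. By Remark~\ref{re} the group scheme $G=\underline{{\rm Aut}}_k(\fie{P}_k^1,\sh{O}(-g-1),\sigma_{\infty})$ acts on the functor ${\rm H}_{g,1}'$ by post-composing the trivialization $\phi$, and the forgetful map ${\rm H}_{g,1}'\to\st{H}_{g,1}'$ is manifestly $G$-invariant. It therefore remains to verify the two defining properties of a $G$-torsor: that ${\rm H}_{g,1}'$ is a pseudo-$G$-torsor over $\st{H}_{g,1}'$, i.e.\ that the action map $G\times{\rm H}_{g,1}'\to{\rm H}_{g,1}'\times_{\st{H}_{g,1}'}{\rm H}_{g,1}'$ is an isomorphism, and that the forgetful map admits sections fppf-locally. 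Once both are in place, the equivalence $\st{H}_{g,1}'\simeq[{\rm H}_{g,1}'/G]$ is the usual dictionary between torsors and quotient-stack presentations (valid for a torsor over an algebraic stack, which is all that is needed here since ${\rm H}_{g,1}'$ is a scheme).

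For the pseudo-torsor property I would argue functorially on $S$-points. An $S$-point of ${\rm H}_{g,1}'\times_{\st{H}_{g,1}'}{\rm H}_{g,1}'$ is an object $(P\to S,\st{L},i,\sigma,j)$ of $\st{H}_{g,1}'(S)$ equipped with two trivializations $\phi$ and $\phi'$ of $(P,\st{L},\sigma)$ against $(\fie{P}_S^1,\sh{O}(-g-1),\sigma_{\infty})$. The composite $\phi'\circ\phi^{-1}$ is then an automorphism of $(\fie{P}_S^1,\sh{O}(-g-1),\sigma_{\infty})$ over $S$, that is, a unique $S$-point $A$ of $G$ with $\phi'=A\cdot\phi$, and conversely. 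This produces a bijection, functorial in $S$, between $(G\times{\rm H}_{g,1}')(S)$ and $({\rm H}_{g,1}'\times_{\st{H}_{g,1}'}{\rm H}_{g,1}')(S)$ sending $(A,x)$ to $(A\cdot x,x)$; since objects of ${\rm H}_{g,1}'$ have no nontrivial automorphisms (Remark~\ref{re}) there is no ambiguity, and the action map is an isomorphism of functors.

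For local triviality I would unwind the data of $(P\to S,\st{L},i,\sigma,j)\in\st{H}_{g,1}'(S)$. Since $P\to S$ is a Brauer--Severi scheme of relative dimension $1$, it is fppf- (indeed \'etale-) locally on $S$ isomorphic to $\fie{P}_S^1$; having trivialized $P$, the section $\sigma$ becomes a section of $\fie{P}_S^1\to S$, and since the orbit map ${\rm PGL}_2\to\fie{P}^1$ through $\sigma_{\infty}$ is smooth and surjective, $\sigma$ can be moved onto $\sigma_{\infty}$ after a further fppf base change (equivalently, the sheaf of isomorphisms $(P,\sigma)\simeq(\fie{P}_S^1,\sigma_{\infty})$ is a torsor under the smooth group $\underline{{\rm Aut}}(\fie{P}^1,\sigma_{\infty})$). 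Once $(P,\sigma)\simeq(\fie{P}_S^1,\sigma_{\infty})$ is arranged, the line bundle $\st{L}\otimes\sh{O}_{\fie{P}_S^1}(g+1)$ has degree $0$ on every fibre, so by cohomology and base change $M:=\pi_*\bigl(\st{L}\otimes\sh{O}(g+1)\bigr)$ is invertible on $S$ and the counit $\pi^*M\to\st{L}\otimes\sh{O}(g+1)$ is an isomorphism; trivializing $M$ Zariski-locally on $S$ supplies the remaining datum $\phi_1\colon\st{L}\simeq\sh{O}(-g-1)$. The sections $i$ and $j$ play no role in building $\phi$ and are simply transported along it. Hence every object of $\st{H}_{g,1}'(S)$ lifts to ${\rm H}_{g,1}'$ after an fppf base change, which together with the previous paragraph proves that ${\rm H}_{g,1}'\to\st{H}_{g,1}'$ is a $G$-torsor.

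I expect the only genuine subtlety to lie in the local-triviality step: one must keep every trivialization fppf-local (in particular invoking cohomology and base change for the line-bundle part rather than assuming $\pi_*$ behaves naively), and one must check that these constructions are compatible with the $G$-action so that the resulting torsor structure is exactly the one of Remark~\ref{re}. In fact, once the section $\sigma$ is stripped away this is precisely the torsor statement for $\st{H}_g'$ proved in \cite{ArVis}, and the present claim can alternatively be deduced from theirs by restricting the structure group from $\underline{{\rm Aut}}(\fie{P}^1,\sh{O}(-g-1))\cong{\rm GL}_2/\mu_{g+1}$ to the stabilizer $G\cong{\rm B}_2/\mu_{g+1}$ of $\sigma_{\infty}$, carrying the extra datum $j$ along.
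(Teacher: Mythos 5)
Your proof is correct and follows essentially the same route as the paper: fppf-locally trivialize the Brauer--Severi scheme, move the section onto $\sigma_{\infty}$ by transitivity, and trivialize $\st{L}$, the paper doing the last step by directly invoking the local triviality of $(P,\st{L})$ from \cite{ArVis} rather than via cohomology and base change. You additionally spell out the pseudo-torsor (simple transitivity) verification, which the paper leaves implicit; this is a welcome completion rather than a divergence.
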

\begin{proof}
	Consider an object $(P\rightarrow S, \st{L},i, \sigma, j)$ in $\st{H}_{g,1}'(S)$, then we can find an fppf covering $S'\rightarrow S$ such that there exists an isomorphism $\phi$ between the pullback to $S'$ of the pair $(P,\st{L})$  and $(\fie{P}_{S'}^1,\sh{O}(-g-1))$.
    We denote by $\tilde{\sigma}$ the composition $\phi_0 \circ \sigma$. Using the transitivity of the action of ${\rm GL}_2$ over $\fie{P}^1$, we can find an element $T$ of ${\rm GL}_2(S)$, up to passing to a fppf covering again, which sends $\tilde{\sigma}$ to $\sigma_{\infty}$. This implies that, given an atlas $H$ of $\st{H}_{g,1}'$ we can find a fppf covering of $H$ such that the pullback of the morphism $${\rm H}_{g,1}' \longrightarrow \st{H}_{g,1}'$$ through this covering is a trivial $G$-torsor. Therefore this concludes the proof.
\end{proof}

We denote by $\rm{PB}_2$ the Borel subgroup of ${\rm PGL}_2$.
Using \cite[Proposition 4.4]{ArVis}, we get the following proposition. 

\begin{proposizione}\label{a}
	 Let $g \geq 2$ be an integer.
	\begin{itemize}
	 \item[i)]If $g$ is even, then the homomorphism of group schemes
	 $$ {\rm B}_2/\mu_{g+1} \longrightarrow {\rm B}_2 $$ 
	 defined by  $$[A] \mapsto {\rm det}(A)^{g/2}A$$ is an isomorphism.
	 
	 \item[ii)]If $g$ is odd, then the homomorphism of group schemes
	 $$ {\rm B}_2/\mu_{g+1} \longrightarrow \fie{G}_m \times\rm{PB}_2 $$
	 defined by
	 $$ [A] \mapsto ({\rm det}(A)^{(g+1)/2},[A])$$
	 is an isomorphism.
	\end{itemize}
\end{proposizione}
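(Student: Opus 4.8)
The plan is to obtain both statements essentially for free, by restricting to the Borel the corresponding isomorphisms for the ambient group, which is precisely the content of \cite[Proposition 4.4]{ArVis}. Under the identification $\underline{{\rm Aut}}(\fie{P}^1,\sh{O}(-g-1))\cong {\rm GL}_2/\mu_{g+1}$ from the proof above, that proposition says that $\Psi_{\rm ev}\colon {\rm GL}_2/\mu_{g+1}\to{\rm GL}_2$, $[A]\mapsto\det(A)^{g/2}A$, is an isomorphism when $g$ is even, and that $\Psi_{\rm od}\colon {\rm GL}_2/\mu_{g+1}\to\fie{G}_m\times{\rm PGL}_2$, $[A]\mapsto(\det(A)^{(g+1)/2},[A])$, is an isomorphism when $g$ is odd. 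Since $\mu_{g+1}$ consists of scalar matrices it lies inside ${\rm B}_2$, so ${\rm B}_2/\mu_{g+1}$ is a well-defined closed subgroup scheme of ${\rm GL}_2/\mu_{g+1}$, and the two homomorphisms in the statement are exactly $\Psi_{\rm ev}$ and $\Psi_{\rm od}$ restricted to it. A restriction of an isomorphism to a closed subgroup scheme is automatically a closed immersion of group schemes, so the only thing left is to identify the two images with ${\rm B}_2$ and $\fie{G}_m\times{\rm PB}_2$ respectively.

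To identify the images I would pull back along the faithfully flat quotient $q\colon {\rm GL}_2\to{\rm GL}_2/\mu_{g+1}$. The composite $\Psi_{\rm ev}\circ q$ is the morphism $A\mapsto\det(A)^{g/2}A$ on ${\rm GL}_2$; multiplying by the invertible scalar $\det(A)^{g/2}$ does not affect the ideal cutting out the lower-triangular locus (it rescales the $(1,2)$-entry by a unit), so the preimage of ${\rm B}_2$ under $\Psi_{\rm ev}\circ q$ is ${\rm B}_2$. Likewise $\Psi_{\rm od}\circ q$ sends $A$ to $(\det(A)^{(g+1)/2},\bar A)$ with $\bar A$ the class of $A$ in ${\rm PGL}_2$, and $\bar A\in{\rm PB}_2$ exactly when $A$ lies in the preimage of ${\rm PB}_2$ in ${\rm GL}_2$, which is ${\rm B}_2\cdot\fie{G}_m={\rm B}_2$ (again the scalars already sit in ${\rm B}_2$); hence the preimage of $\fie{G}_m\times{\rm PB}_2$ under $\Psi_{\rm od}\circ q$ is ${\rm B}_2$ as well.

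Finally, since $q^{-1}({\rm B}_2/\mu_{g+1})={\rm B}_2\cdot\mu_{g+1}={\rm B}_2$, in each case the two closed subschemes $\Psi^{-1}(\text{target Borel})$ and ${\rm B}_2/\mu_{g+1}$ of ${\rm GL}_2/\mu_{g+1}$ have the same preimage ${\rm B}_2$ under $q$; as $q$ is faithfully flat, a closed subscheme is determined by its preimage, so $\Psi_{\rm ev}^{-1}({\rm B}_2)={\rm B}_2/\mu_{g+1}$ and $\Psi_{\rm od}^{-1}(\fie{G}_m\times{\rm PB}_2)={\rm B}_2/\mu_{g+1}$. Therefore $\Psi_{\rm ev}$ and $\Psi_{\rm od}$ restrict to isomorphisms ${\rm B}_2/\mu_{g+1}\xrightarrow{\ \sim\ }{\rm B}_2$ and ${\rm B}_2/\mu_{g+1}\xrightarrow{\ \sim\ }\fie{G}_m\times{\rm PB}_2$, and these restrictions are visibly given by the stated formulas. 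The only genuinely non-formal point is this identification of the image (equivalently, surjectivity onto the Borel); well-definedness and injectivity come for free from \cite[Proposition 4.4]{ArVis}. As a sanity check: in \cite{ArVis} the case split is governed by the parity of the twist $r=g+1$, which accounts for the inversion of the parity of $g$ between the two cases, and $(r-1)/2=g/2$, $r/2=(g+1)/2$ match the exponents above.
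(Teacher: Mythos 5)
Your proof is correct and follows exactly the route the paper intends: the paper gives no argument beyond the citation ``Using \cite[Proposition 4.4]{ArVis}, we get the following proposition,'' and your write-up simply fills in the routine verification that the ambient isomorphisms of ${\rm GL}_2/\mu_{g+1}$ restrict to isomorphisms between the Borel subgroups, including the correct identification of images via faithfully flat descent along $q\colon{\rm GL}_2\to{\rm GL}_2/\mu_{g+1}$. Your closing sanity check on the parity flip (the case split in \cite{ArVis} is governed by $g+1$, not $g$) is also accurate.
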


Putting together all the precedent results, we finally get the description we need.

\begin{corollario} Let $g \geq 2$ an integer. The stack $\st{H}_{g,1}$ is equivalent to the quotient stack 
	$$ \Big[ \widetilde{\fie{A}}_{sm}(2g+2)/ G\Big] $$ 
	 where the group $G$ and its action on $\widetilde{\fie{A}}_{sm}(2g+2)$ are described by the following formulas:
    \begin{itemize}
    	\item if $g$ is even, then $G = {\rm B}_2$ and the action is given by
    	            $$ A\cdot(f(x),s):= \bigg(({\rm det}A)^gf(A^{-1}x),a^{\frac{g}{2}}{c^{-\frac{g+2}{2}}}s\bigg) $$ 
    	             where 
    	            $$A=	\begin{pmatrix}
    	            a & 0 \\
    	            b & c
    	            \end{pmatrix} \in  {\rm B}_2.$$
    	\item if $g$ is odd, then $G = \fie{G}_m \times\rm{PB}_2$ and the action is given by
    	$$ (\alpha,A)\cdot(f(x),s):= \Big(\alpha^{-2}{\rm det}(A)^{g+1}f(A^{-1}x),\alpha^{-1}a^{\frac{g+1}{2}}{c^{-\frac{g+1}{2}}}s\Big)$$ 
    	where  $$A=	\begin{bmatrix}
    	a & 0 \\
    	b & c
    	\end{bmatrix} \in  \rm{PB}_2.$$
    \end{itemize}

	\end{corollario}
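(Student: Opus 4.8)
The plan is to concatenate the equivalences already established in this section and then push the group action across the isomorphisms of Proposition~\ref{a}. Concretely, the three propositions preceding Proposition~\ref{a} give, in logical order, $\st{H}_{g,1}\simeq\st{H}_{g,1}'$, then $\st{H}_{g,1}'\simeq\bigl[{\rm H}_{g,1}'/G\bigr]$ with $G=\underline{{\rm Aut}}_k(\fie{P}_k^1,\sh{O}(-g-1),\sigma_\infty)$, and finally an isomorphism $G\simeq{\rm B}_2/\mu_{g+1}$ together with an identification ${\rm H}_{g,1}'\simeq\widetilde{\fie{A}}_{sm}(2g+2)$ under which the $G$-action becomes $A\cdot(f(x),s)=(f(A^{-1}x),c^{-(g+1)}s)$. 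Composing, $\st{H}_{g,1}\simeq\bigl[\widetilde{\fie{A}}_{sm}(2g+2)/({\rm B}_2/\mu_{g+1})\bigr]$. Then I would invoke the elementary fact that for an isomorphism of group schemes $\phi\colon G_1\to G_2$ and a $G_1$-scheme $X$ there is a canonical equivalence $[X/G_1]\simeq[X/G_2]$, where $G_2$ acts by $g\cdot x:=\phi^{-1}(g)\cdot x$; taking $\phi$ to be one of the two isomorphisms of Proposition~\ref{a} produces the two presentations in the statement, and all that remains is to compute the transported action explicitly in each case.

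\emph{Even case.} Here $\phi\colon {\rm B}_2/\mu_{g+1}\to{\rm B}_2$ sends $[A]$ to $M:=(\det A)^{g/2}A$; write $M=\bigl(\begin{smallmatrix}m_a&0\\ m_b&m_c\end{smallmatrix}\bigr)$, so that $m_a=(\det A)^{g/2}a$, $m_c=(\det A)^{g/2}c$ and $\det M=(\det A)^{g+1}$. Using that $f$ is homogeneous of degree $2g+2$ I would compute
\[ (\det M)^g\,f(M^{-1}x)=(\det A)^{g(g+1)}(\det A)^{-g(g+1)}f(A^{-1}x)=f(A^{-1}x) \]
and, expanding the exponents,
\[ m_a^{g/2}\,m_c^{-(g+2)/2}=(ac)^{-g/2}a^{g/2}c^{-(g+2)/2}=c^{-(g+1)}. \]
So the transported action of $M\in{\rm B}_2$ is $(f,s)\mapsto\bigl((\det M)^g f(M^{-1}x),\,m_a^{g/2}m_c^{-(g+2)/2}s\bigr)$, which is exactly the formula stated (the exponents are integers since $g$ is even, and $m_c$ is invertible on ${\rm B}_2$).

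\emph{Odd case, and the main point.} Here $\phi\colon {\rm B}_2/\mu_{g+1}\to\fie{G}_m\times{\rm PB}_2$ sends $[A]$ to $\bigl((\det A)^{(g+1)/2},[A]\bigr)$. Given $(\alpha,[A])$ and a lift $A=\bigl(\begin{smallmatrix}a&0\\ b&c\end{smallmatrix}\bigr)\in{\rm B}_2$, the preimage $\phi^{-1}(\alpha,[A])$ is represented, fppf-locally, by $\lambda A$ with $\lambda^{g+1}=\alpha(\det A)^{-(g+1)/2}$; acting by $[\lambda A]$ and substituting (again using $\deg f=2g+2$) gives
\[ f\bigl((\lambda A)^{-1}x\bigr)=\lambda^{-(2g+2)}f(A^{-1}x)=\alpha^{-2}(\det A)^{g+1}f(A^{-1}x),\qquad (\lambda c)^{-(g+1)}s=\alpha^{-1}a^{(g+1)/2}c^{-(g+1)/2}s, \]
which is the stated action. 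One then checks, by the same kind of scaling computation, that this last expression is independent of the chosen lift $A$ (so that it is a bona fide action of $\fie{G}_m\times{\rm PB}_2$), and that both formulas are already well defined on ${\rm B}_2/\mu_{g+1}$ before transport, i.e. unchanged when $A$ is replaced by $\zeta A$ for $\zeta\in\mu_{g+1}$, using $\zeta^{g+1}=1$. There is no conceptual obstacle beyond Proposition~\ref{a} and the three propositions above; the only real work is precisely this bookkeeping — tracking the weight $2g+2$ of $f$ under precomposition with scalar matrices, handling well-definedness modulo $\mu_{g+1}$ and modulo scalars in ${\rm PB}_2$, and keeping the even/odd exponent arithmetic straight.
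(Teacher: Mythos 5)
Your proposal is correct and follows exactly the route the paper intends: the paper gives no explicit argument for this corollary beyond the phrase ``putting together all the precedent results,'' and the content of that phrase is precisely your concatenation of the equivalences $\st{H}_{g,1}\simeq\st{H}_{g,1}'\simeq[{\rm H}_{g,1}'/G]$ with ${\rm H}_{g,1}'\simeq\widetilde{\fie{A}}_{sm}(2g+2)$, followed by transporting the action $[A]\cdot(f,s)=(f(A^{-1}x),c^{-(g+1)}s)$ across the isomorphisms of Proposition~\ref{a}. Your exponent bookkeeping in both the even and odd cases checks out, including the well-definedness verifications modulo $\mu_{g+1}$ and modulo scalars in ${\rm PB}_2$.
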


\begin{osservazione}
	Notice that in both the even and odd genus case, the group $G$ is in fact a Borel (maximal connected solvable) subgroup of $\rm{GL}_2/\mu_{g+1}$.
\end{osservazione}	

 \section{Reduction to the weighted projectivization }\label{red}

From now on $G$ will be one of the two groups described in \hyperref[a]{Proposition \ref{a}} depending on the parity of the genus.

We have now found the description of $\st{H}_{g,1}'$ as a quotient stack. Using this presentation, we know that the integral Chow ring of $\st{H}_{g,1}'$ can be computed as the $G$-equivariant Chow ring of $\widetilde{\fie{A}}_{sm}(2g+2)$ as defined in \cite[Proposition 19]{EdGra}, i.e. 
$$ {\rm CH}^*(\st{H}_{g,1}')= {\rm CH}^*_{G}(\widetilde{\fie{A}}_{sm}(2g+2)).$$

The following remark explains why we can reduce ourselves to the computation of $T$-equivariant Chow ring of $\widetilde{\fie{A}}_{sm}(2g+2)$, where $T$ is the maximal torus of the diagonal matrices inside $G$.
\begin{osservazione}
	Recall that the group $G$ is defined as:
	\begin{itemize}
		\item $G = \fie{G}_m \times\rm{PB}_2$ if $g$ is odd,
		\item $G = {\rm B}_2$ if $g$ is even.
	\end{itemize}
	We notice that both of them are unipotent split extensions of a 2-dimensional split torus $T$ because $G$ is a Borel sungroup of $\rm{GL}_2/\mu_{g+1}$. If $g$ is odd, we construct the following isomorphism explicitly for the sake of explicit computations:
	$$  \fie{G}_m \times\rm{PB}_2 \longrightarrow \fie{G}_m^2 \ltimes \fie{G}_a $$
	defined by 
	$$ (\alpha, [A]) \mapsto (\alpha, a/c, b/c) $$ 
	where 
	$$A=	\begin{bmatrix}
	a & 0 \\
	b & c
	\end{bmatrix} \in  \rm{PB}_2.$$

	Using the result \cite[Lemma 2.3]{RoVis}, we deduce that the homomorphism 
	$$ {\rm CH}^*_T(X) \longrightarrow {\rm CH}^*_G(X) $$
	induced by the projection $G \rightarrow T$ is in fact an isomorphism of rings for every smooth $G$-scheme $X$ (it is in fact an isomorphism of graded groups for $X$ an algebraic scheme). Therefore we can consider directly the action of the $2$-dimensional split torus $T$ inside $G$.

	Using this identification, we get the following description of the action of $T$ on the affine scheme $\fie{A}(2g+2) \times \fie{A}^1$:
	\begin{itemize}
		\item if $g$ is even, 
		$$ (t_0,t_1)\cdot(f(x_0,x_1),s):= \Big((t_0t_1)^{g} f(x_0/t_0,x_1/t_1), t_0^{\frac{g}{2}}t_1^{-\frac{g+2}{2}}s \Big) ; $$
		\item if $g$ is odd, 
		$$ (\alpha,\rho)\cdot(f(x_0,x_1),s) = \Big(\alpha^{-2}\rho^{g+1}f(x_0/\rho,x_1),\alpha^{-1}\rho^{\frac{g+1}{2}}s\Big).$$ 
	\end{itemize}
\end{osservazione}

From now on, we have to concentrate on computing the $T$-equivariant Chow ring of $\widetilde{\fie{A}}_{sm}(2g+2)$, where $T$ will be the split 2-dimensional torus and the action will be the one described above depending on whether $g$ is odd or even. 

We will use the localization sequence to compute the Chow group we are interested in. In fact, if we manage to describe $\widetilde{\fie{A}}_{sm}(2g+2)$ as an open subscheme $U$ of a $T$-representation $V$, the localization sequence will give us the following explicit description:
$$ {\rm CH}_T(\widetilde{\fie{A}}_{sm}(2g+2)) = \frac{{\rm CH}({\rm B}T)}{I} $$
where $I$ will be the ideal generated by the pushforward of the cycles from the closed subscheme $V\setminus U$.

\begin{osservazione}

There is a natural isomorphism of $k$-schemes (without considering the $T$-action) $$\xi_n:\widetilde{\fie{A}}(n) \simeq \fie{A}^{n+1}$$ described by the formula 
$$ \xi_n(a_0,\dots,a_n,s)=(a_0,\dots,a_{n-1},s).$$
 Therefore $\widetilde{\fie{A}}(2g+2)$ can be identified  with the $k$-scheme $\fie{A}^{2g+3}$ endowed with the unique action that makes $\xi_{2g+2}$ into a $T$-equivariant isomorphism. 
 Under this identification, $\widetilde{\fie{A}}(2g+2)$ is clearly a $T$-representation and the natural projection map $$\varphi_{2g+2} :\widetilde{\fie{A}}(2g+2) \hookrightarrow \fie{A}(2g+2) \times \fie{A}^1 \longrightarrow \fie{A}(2g+2) $$is $T$-equivariant. Recall that $\widetilde{\fie{A}}_{sm}(n)$ is defined as the intersection of $\fie{A}_{sm}(n) \times \fie{A}^1$ and $\widetilde{\fie{A}}(n)$ inside $\fie{A}(n)\times \fie{A}^1$; thus, 
 we have that $$\widetilde{\fie{A}}_{sm}(2g+2)=\varphi_{2g+2}^{-1}(\fie{A}_{sm}(2g+2)).$$
Therefore $\widetilde{\fie{A}}_{sm}(2g+2)$ is a $T$-invariant open subset of the $T$-representation $\widetilde{\fie{A}}(2g+2)$. If we denote by $\widetilde{\Delta}$ the complement of $\widetilde{\fie{A}}_{sm}(2g+2)$ inside $\widetilde{\fie{A}}(2g+2)$, then set-theoretically the following holds:
$$ \widetilde{\Delta}= \varphi_{2g+2}^{-1}(\Delta) $$
where $\Delta$ is the discriminant locus inside $\fie{A}(2g+2)$.
\end{osservazione}

The problem now is to describe the image of the pushforward along the inclusion
 $$\widetilde{\Delta} \hookrightarrow \widetilde{\fie{A}}(2g+2)$$ 
 at the level of Chow group. The idea is to construct a stratification of $\widetilde{\Delta}$ lifting the one introduced in \cite[Proposition 4.1]{EdFul}. We will pass to the projectivization of $\widetilde{\fie{A}}(2g+2)$, considering $\widetilde{\fie{A}}(2g+2)$ 
  as $T$-representation through the identification $\xi_{2g+2}$. Notice that $\widetilde{\Delta}$ is invariant for the action of $\fie{G}_m$ defined by the formula $\lambda\cdot (h,t):=(\lambda^2 h,\lambda t)$. We will denote by $\fie{P}(2^N,1)$ the quotient stack of $\widetilde{\fie{A}}(N)\setminus 0$ by $\fie{G}_m$ using this weighted action, where $N:=2g+2$.
 
 The following proposition explains why we can pass to the weighted projective setting without losing any information about Chow group. First, suppose we have two group schemes $G$ and $H$ acting on a scheme $X$ such that their actions commute. From now on, the Chow group ${\rm CH}([X/(G\times H)])$ defined as the  $G \times H$-equivariant Chow group of $X$ will be also denoted by ${\rm CH}_H([X/G])$ or ${\rm CH}_G([X/H])$.
\begin{proposizione}\label{GM}
	Let $X$ be a smooth algebraic scheme over $k$ with an action of $\fie{G}_m$ and an action of a group $G$ such that the two actions commute, then the natural morphism of rings
	$$ {\rm CH}([X/(\fie{G}_m\times G)]) \longrightarrow  {\rm CH}([X/G])$$
	induced by the pullback along the $\fie{G}_m$-torsor 
	$$ [X/G] \longrightarrow [X/(\fie{G}_m\times G)]$$
	is surjective. Moreover, its kernel is the ideal generated by $c_1(\st{L})$ in ${\rm CH}([X/(\fie{G}_m\times G)])$, where $\st{L}$ is the line bundle associated to the $\fie{G}_m$-torsor.
\end{proposizione}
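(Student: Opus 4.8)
The plan is to identify the quotient stack $[X/G]$ with the total space of a line bundle (minus the zero section? no—with the total space of an associated bundle) over $[X/(\fie{G}_m\times G)]$, and then invoke the standard fact that pullback along the projection of a line bundle induces an isomorphism on Chow groups, while the first Chern class of the tautological line bundle goes to zero. More precisely, write $\sh{P} := [X/G]$; since the $\fie{G}_m$-action commutes with the $G$-action, it descends to an action of $\fie{G}_m$ on $\sh{P}$, and $[X/(\fie{G}_m\times G)] = [\sh{P}/\fie{G}_m]$. The $\fie{G}_m$-torsor $\sh{P}\to[\sh{P}/\fie{G}_m]$ is classified by a line bundle $\st{L}$ on $[\sh{P}/\fie{G}_m]$, and $\sh{P}$ is the complement of the zero section inside the total space $\underline{\spec}({\rm Sym}\,\st{L}^{\vee})\to[\sh{P}/\fie{G}_m]$. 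So the statement reduces to two classical facts: (a) for a line bundle $\pi\colon E\to Y$ on a smooth stack, $\pi^*\colon {\rm CH}(Y)\to{\rm CH}(E)$ is an isomorphism (homotopy invariance); and (b) the localization/excision sequence for the zero section $Y\hookrightarrow E$ with open complement $E\setminus Y$ reads ${\rm CH}(Y)\xrightarrow{i_*}{\rm CH}(E)\to{\rm CH}(E\setminus Y)\to 0$, where $i_*$ composed with $\pi^*$-inverse is multiplication by $c_1(\st{L})$ (self-intersection of the zero section).

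The steps, in order: First, set up the identification $[X/(\fie{G}_m\times G)]\simeq[\sh{P}/\fie{G}_m]$ and recall that the $\fie{G}_m$-torsor $\sh{P}\to[\sh{P}/\fie{G}_m]$ corresponds to a line bundle $\st{L}$, with $\sh{P}$ sitting inside the associated line bundle $E$ as the complement of the zero section. Second, apply homotopy invariance of equivariant Chow groups (Edidin--Graham, \cite{EdGra}) to get that $\pi^*\colon{\rm CH}([\sh{P}/\fie{G}_m])\to{\rm CH}(E)$ is an isomorphism; note that all stacks here are quotients of smooth schemes by linear algebraic groups, so the equivariant intersection theory applies verbatim. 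Third, write the localization exact sequence for the closed embedding of the zero section $Z\simeq[\sh{P}/\fie{G}_m]\hookrightarrow E$ with complement $E\setminus Z\simeq\sh{P}=[X/G]$: this gives right-exactness ${\rm CH}(Z)\xrightarrow{i_*}{\rm CH}(E)\to{\rm CH}([X/G])\to 0$, which yields surjectivity of the restriction ${\rm CH}(E)\to{\rm CH}([X/G])$, hence (composing with the iso $\pi^*$) surjectivity of the map in the statement. Fourth, identify the kernel: it is the image of $i_*$, and by the self-intersection formula $i^*i_*(\alpha)=c_1(N_{Z/E})\cdot\alpha = c_1(\st{L}^{\vee})\cdot\alpha$ (or $c_1(\st{L})$, depending on the sign convention for which torsor $\st{L}$ is defined), so under the identification ${\rm CH}(E)\simeq{\rm CH}(Z)$ the image of $i_*$ is exactly the ideal $(c_1(\st{L}))$. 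The ring structure statement follows because all these maps (pullbacks and the restriction to an open) are ring homomorphisms.

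The main obstacle, and the only point requiring care, is the passage to the quotient-stack setting: one must make sure that homotopy invariance and the localization sequence hold for equivariant Chow groups of the relevant stacks. This is fine here because everything is a global quotient $[V/\Gamma]$ with $V$ a smooth scheme and $\Gamma$ a linear algebraic group acting, so one works with Edidin--Graham's finite-dimensional approximations: the line bundle $E\to[\sh{P}/\fie{G}_m]$ pulls back, at each level of the approximation, to an honest line bundle over a smooth scheme, and both homotopy invariance and the localization sequence are available there and pass to the limit. A secondary subtlety is bookkeeping the sign/duality: whether $\st{L}$ is the line bundle whose complement-of-zero-section is the torsor, or its dual; this only affects whether the kernel is $(c_1(\st{L}))$ or $(c_1(\st{L}^{\vee})) = (-c_1(\st{L}))$, which generate the same ideal, so the statement as phrased is unaffected. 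Beyond these points the argument is entirely formal.
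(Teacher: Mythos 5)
Your argument is correct and is essentially the same as the paper's: the paper disposes of the statement in one line by reducing, via base change along an atlas/approximation, to the case of a $\fie{G}_m$-torsor of algebraic spaces where the result is ``well known,'' and the well-known argument in question is exactly the one you spell out (complement of the zero section in the associated line bundle, homotopy invariance, the localization sequence, and the self-intersection formula). Your explicit handling of the equivariant setting via Edidin--Graham approximations, and the remark that the sign/duality ambiguity in $c_1(\st{L})$ does not affect the ideal, are both sound.
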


\begin{proof}
	 Because torsors are stable under base change and representable as morphisms of stacks, we can reduce to the case of an $\fie{G}_m \times G$-equivariant $\fie{G}_m$-torsor in the category of algebraic spaces, where this result is well known.
\end{proof}
 
 Using the previous proposition and writing down the following commutative diagram of ${\rm CH}({\rm B}T)$-algebras:
 $$\xymatrix{ {\rm CH}_T(\widetilde{\Delta} \setminus 0) \ar[r] & {\rm CH}_T(\widetilde{\fie{A}}(N)\setminus 0) \ar[r] & {\rm CH}(\st{H}_{g,1})) \ar[r] & 0 \\ 
 	{\rm CH}_T([\widetilde{\Delta}\setminus 0/\fie{G}_m]) \ar[r] \ar[u] & {\rm CH}_T(\fie{P}(2^N,1)) \ar[r] \ar[u] & {\rm CH}_T([\widetilde{\fie{A}}_{sm}(N)\setminus 0/\fie{G}_m]) \ar[r] \ar[u] & 0 \ } $$
 we can reduce the computation to the weighted projective setting and then set the first Chern class of the line bundle associated to the $\fie{G}_m$-torsor equal to $0$. 
 The rest of the section will be dedicated to computing the $T$-equivariant Chow group of $\fie{P}(2^N,1)$, i.e. 
  $$ {\rm CH}_{T}(\fie{P}(2^N,1)):={\rm CH}_{T \times \fie{G}_m}(\widetilde{\fie{A}}(N)\setminus 0). $$
Let $T$ be a split torus of dimension $r$, i.e. $T\simeq \fie{G}_m^r$. 
 \begin{osservazione}
 	 Edidin and Graham have already proved in \cite[Section 3.2]{EdGra} that 
 	$${\rm CH}({\rm B}T) \simeq  \fie{Z}[T_1,\dots,T_r] $$ 
 	where $T_j=c_1^{(\fie{G}_{m})_j}(\fie{A}^1)$, $(\fie{G}_m)_j$ is the $j$-th factor of the product $T$ and $\fie{A}^1$ is the representation of $\fie{G}_m$ with weight $1$. 
 	Suppose $T$ acts on $\fie{A}^{n+1}$. We can decompose the representation $\fie{A}^{n+1}$ in a product of irreducible representations $\fie{A}^1_0 \times \dots\times \fie{A}^1_n$ where $T$ acts on $\fie{A}^1_i$ with some weights $(m_1^i,\dots,m_r^i) \in \fie{Z}^r$ for every $0\leq i\leq n$. If we denote by $ p_i(T_1,\dots,T_r)$ the first Chern classes $c_1^T(\fie{A}^1_i)$, then we get 
 	$$ p_i(T_1,\dots,T_r) = \sum_{j=1}^r m_j^i T_j \in \fie{Z}[T_1,\dots,T_r]$$ 
 	for every $0\leq i \leq n$.
 \end{osservazione}

\begin{proposizione}\label{c}
	In the setting of the previous remark, we get the following result:
	$$ {\rm CH}_T(\fie{A}^{n+1}\setminus 0) = \frac{\fie{Z}[T_1,\dots,T_r]}{\Big(\prod_{i=0}^{n}p_i(T_1,...,T_r)\Big)}. $$
\end{proposizione}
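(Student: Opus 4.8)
The plan is to use the equivariant localization sequence for the $T$-invariant closed immersion $\{0\}\hookrightarrow \fie{A}^{n+1}$ with open complement $\fie{A}^{n+1}\setminus 0$. This produces a right-exact sequence of ${\rm CH}({\rm B}T)$-modules
$$ {\rm CH}_T(\{0\}) \xrightarrow{\ i_*\ } {\rm CH}_T(\fie{A}^{n+1}) \xrightarrow{\ j^*\ } {\rm CH}_T(\fie{A}^{n+1}\setminus 0) \longrightarrow 0, $$
so it suffices to identify the first two groups together with the map $i_*$ between them, and then take the cokernel.

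By homotopy invariance of equivariant Chow groups (see \cite{EdGra}), flat pullback along the structure map $\fie{A}^{n+1}\to \spec k$ induces an isomorphism ${\rm CH}({\rm B}T)={\rm CH}_T(\spec k)\xrightarrow{\ \sim\ }{\rm CH}_T(\fie{A}^{n+1})$, whose inverse is the restriction $i^{*}$ to the origin; and of course ${\rm CH}_T(\{0\})={\rm CH}({\rm B}T)=\fie{Z}[T_1,\dots,T_r]$. Under these identifications $i_*$ becomes multiplication by the single element $i^{*}i_{*}(1)\in\fie{Z}[T_1,\dots,T_r]$, so the whole computation reduces to evaluating this class.

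To compute $i^{*}i_{*}(1)$ I would invoke the equivariant self-intersection formula $i^{*}i_{*}(\alpha)=\alpha\cdot e_T\bigl(N_{\{0\}/\fie{A}^{n+1}}\bigr)$, where $e_T$ denotes the equivariant Euler (top Chern) class. Since $\{0\}$ is a point, the normal bundle is simply the $T$-representation $\fie{A}^{n+1}$, which decomposes as $\fie{A}^1_0\times\dots\times\fie{A}^1_n$ with $c_1^T(\fie{A}^1_i)=p_i(T_1,\dots,T_r)$ by the remark preceding the statement. Hence by Whitney's formula $e_T(N)=\prod_{i=0}^{n}p_i(T_1,\dots,T_r)$, the image of $i_*$ is the ideal generated by this product, and the cokernel is exactly
$$ {\rm CH}_T(\fie{A}^{n+1}\setminus 0)=\frac{\fie{Z}[T_1,\dots,T_r]}{\bigl(\prod_{i=0}^{n}p_i(T_1,\dots,T_r)\bigr)}, $$
as claimed.

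I do not expect a genuine obstacle here: the only steps requiring care are that equivariant homotopy invariance and the equivariant self-intersection formula hold in this generality — both are established in \cite{EdGra} — together with the identification of the Euler class of a sum of characters with the product of their first Chern classes. (One could instead argue by induction on $n$, stratifying $\fie{A}^{n+1}\setminus 0$ according to the vanishing of the last coordinate and using the affine-bundle structure of the open stratum, but the localization argument above is the most direct route.)
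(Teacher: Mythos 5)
Your proof is correct and follows essentially the same route as the paper: the localization sequence for $0\hookrightarrow\fie{A}^{n+1}$, homotopy invariance to identify ${\rm CH}_T(\fie{A}^{n+1})$ with ${\rm CH}({\rm B}T)$, and the self-intersection formula identifying the image of $i_*$ with the ideal generated by $c_{n+1}^T(\fie{A}^{n+1})=\prod_{i=0}^{n}p_i(T_1,\dots,T_r)$. No gaps.
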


\begin{proof}
Consider the localization sequence for the $T$-invariant open subscheme $\fie{A}^{n+1} \setminus 0 \hookrightarrow \fie{A}^{n+1}$:

$$ \xymatrix{{\rm CH}({\rm B}T) \ar[r]^{(i_0)_*} & {\rm CH}_T(\fie{A}^{n+1}) \ar[d]^{(i_0)^*} \ar[r] & {\rm CH}_T(\fie{A}^{n+1}\setminus 0) \ar[r] & 0\\ 
 & {\rm CH}({\rm B}T)  } $$ 

where $i_0$ is the $0$-section of the $T$-equivariant vector bundle over $\spec k$. Using the self-intersection formula, we get that the image through $(i_0)_*$ of ${\rm CH}({\rm B}T)$ is the ideal (inside ${\rm CH}({\rm B}T)$) generated by $c_{n+1}^T(\fie{A}^{n+1})$. Thus we get the following equality 
$$c_{n+1}^T(\fie{A}^{n+1}) = \prod_{i=0}^n c_1^T(\fie{A}_i^1) = \prod_{i=0}^{n}p_i(T_1,\dots,T_r)$$
and the statement follows.
\end{proof}

Recall that 
$$ \fie{P}(2^N,1) \simeq  \Big[ \big(\widetilde{\fie{A}}(N)\setminus 0\big) / \fie{G}_m \Big] $$ 
where $\fie{G}_m$ acts with weights $(2,\dots,2,1)$ and we have an action of a $2$-dimensional split torus $T$ over $\widetilde{\fie{A}}(N)$. Furthermore, recall that by definition
$$ {\rm CH}_T(\fie{P}(2^N,1)):= {\rm CH}_{T \times \fie{G}_m}(\widetilde{\fie{A}}(N)\setminus 0). $$
 We denote by $p_i(T_0,T_1)$ the first $T$-equivariant Chern class of the $i$-th factor of the product $\widetilde{\fie{A}}(N) \simeq \fie{A}^1_0 \times \dots\times \fie{A}^1_N$, and by $h$ the first $\fie{G}_m$-equivariant Chern class of the irreducible representation of $\fie{G}_m$ with weight $1$ (here we are considering $\widetilde{\fie{A}}(N)$ a $T$-representation through the identification $\xi_N$). The previous proposition gives us the following result.

\begin{corollario}
	In the above setting, we get 
	$$ {\rm CH}_T(\fie{P}(2^N,1)) \simeq \frac{\fie{Z}[T_0,T_1,h]}{(h+p_N(T_0,T_1))\prod_{i=0}^{N-1}(2h+p_i(T_0,T_1))} $$ 
	for some homogeneous polynomials $p_i(T_0,T_1)$ of degree $1$. 
\end{corollario}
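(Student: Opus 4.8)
The plan is to deduce this as a direct instance of \hyperref[c]{Proposition \ref{c}}, applied to the three-dimensional split torus $T \times \fie{G}_m$ acting on $\widetilde{\fie{A}}(N)$. First I would recall that by definition ${\rm CH}_T(\fie{P}(2^N,1)) = {\rm CH}_{T \times \fie{G}_m}(\widetilde{\fie{A}}(N) \setminus 0)$, and that via the isomorphism $\xi_N$ of the preceding remark the scheme $\widetilde{\fie{A}}(N)$ is a $T$-representation which decomposes as a product of one-dimensional pieces $\fie{A}^1_0 \times \dots \times \fie{A}^1_N$ whose $T$-equivariant first Chern classes are, by definition, the $p_i(T_0,T_1)$. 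Since $\xi_N$ carries the origin to the origin and is equivariant for both group actions, it restricts to a $(T\times\fie{G}_m)$-equivariant isomorphism $\widetilde{\fie{A}}(N)\setminus 0 \cong \fie{A}^{N+1}\setminus 0$. Since $T$ and $\fie{G}_m$ act on this representation through commuting scalars on the common eigenbasis given by the $a_i$ and $s$, their actions commute, so $T \times \fie{G}_m$ is a split torus of rank $3$ with ${\rm CH}({\rm B}(T\times\fie{G}_m)) = \fie{Z}[T_0,T_1,h]$.

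Next I would pin down the $\fie{G}_m$-action under $\xi_N$. The weighted action $\lambda \cdot (f,s) = (\lambda^2 f, \lambda s)$ on $\widetilde{\fie{A}}(N) \subseteq \fie{A}(N) \times \fie{A}^1$ scales the coordinates $a_0, \dots, a_{N-1}$ by $\lambda^2$ and the coordinate $s$ by $\lambda$, so under $\xi_N$ the factor $\fie{A}^1_i$ carries $\fie{G}_m$-weight $2$ for $0 \le i \le N-1$ and $\fie{G}_m$-weight $1$ for $i = N$. Combining with the $T$-weights, the $(T\times\fie{G}_m)$-equivariant first Chern class $c_1^{T\times\fie{G}_m}(\fie{A}^1_i)$ equals $2h + p_i(T_0,T_1)$ for $i < N$ and $h + p_N(T_0,T_1)$ for $i = N$.

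Finally, I would invoke \hyperref[c]{Proposition \ref{c}} with $n+1 = N+1$, $r = 3$, and the torus $T \times \fie{G}_m$: its equivariant Chow ring of $\fie{A}^{N+1} \setminus 0$ is $\fie{Z}[T_0,T_1,h]$ modulo the ideal generated by the product of the $N+1$ equivariant first Chern classes just computed, i.e. by $(h + p_N(T_0,T_1)) \prod_{i=0}^{N-1}(2h + p_i(T_0,T_1))$, which is the claimed presentation. I do not expect a genuine obstacle: the statement is a formal consequence of \hyperref[c]{Proposition \ref{c}}, and the only point requiring care is the bookkeeping of the $\fie{G}_m$-weights under the nonlinear identification $\xi_N$ together with the verification that the two torus actions commute so that the base ring is honestly the polynomial ring $\fie{Z}[T_0,T_1,h]$.
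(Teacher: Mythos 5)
Your proposal is correct and follows exactly the route the paper intends: the corollary is stated as an immediate application of Proposition \ref{c} to the rank-$3$ split torus $T\times\fie{G}_m$ acting diagonally on $\widetilde{\fie{A}}(N)\cong\fie{A}^{N+1}$ via $\xi_N$, with $\fie{G}_m$-weights $(2,\dots,2,1)$. Your careful bookkeeping of the weights under $\xi_N$ and the check that the two torus actions commute are precisely the (unwritten) details behind the paper's one-line justification.
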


\begin{osservazione}
	We can easily compute the polynomials $p_i$ mentioned in the previous corollary, but it is not necessary.
\end{osservazione}

\section{Equivariant Chow envelope for $\widetilde{\Delta}$}\label{eq}

In this section, we recall briefly the theory of Chow envelopes for quotient stacks and then we find a Chow envelope for $\widetilde{\Delta}$. The idea is to modify the one described in \cite[Section 4]{EdFul} so to suit this weighted projective setting. Fix again $N:=2g+2$. 
\begin{definition}
	Let $f:\st{X} \rightarrow \st{Y}$ be a proper, representable morphism of quotient stacks. We say that $f$ has the property $\sh{N}$ if the morphism of groups 
	$$ f_*:{\rm CH}(\st{X}) \rightarrow {\rm CH}(\st{Y}) $$
	is surjective.
	
	We say that a morphism of algebraic stacks $f:\st{X} \rightarrow \st{Y}$ is a Chow envelope if $f(K): \st{X}(K) \rightarrow \st{Y}(K)$ is essentially surjective for every extension of fields $K/k$, i.e. for every element $y \in \st{Y}(K)$ there exist an object $x \in \st{X}(K)$ and an isomorphism $\eta: f(K)(x) \rightarrow y$ in the groupoid $Y(K)$.
\end{definition}
\begin{osservazione}\label{Env}
	It is a classical fact that if $f:X \rightarrow Y$ is a proper morphism of algebraic spaces over $k$ such that $f$ is a Chow envelope, then $f$ has the property $\sh{N}$.
\end{osservazione}
We want to prove the same for quotient stacks.
\begin{lemma}\label{lem2}
	Let $G$ be a group scheme over $k$ and suppose we have an action of $G$ on two algebraic spaces $X$ and $Y$ and a map $f: X \rightarrow Y$  which is $G$-equivariant. We denote by $f^G$ the induced morphism of quotient stacks and we assume it is proper and representable. If $f^G$ is a Chow envelope, then $f^G$  has the property $\sh{N}$. 
\end{lemma}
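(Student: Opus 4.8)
The plan is to deduce this from the analogous statement for algebraic spaces (\hyperref[Env]{Remark \ref{Env}}) by passing to Edidin--Graham approximations of the equivariant Chow groups.

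First I would recall from \cite{EdGra} that, in a fixed degree, one computes $ {\rm CH}([X/G]) =  {\rm CH}_G(X)$ by choosing a $G$-representation $V$ and a $G$-invariant open $U \subseteq V$ on which $G$ acts freely, with $V \setminus U$ of sufficiently large codimension; then $G$ acts freely on $X \times U$ and on $Y \times U$, the quotients $X_U := [(X\times U)/G]$, $Y_U := [(Y\times U)/G]$ and $U_G := [U/G]$ are algebraic spaces, and $ {\rm CH}^G_j(X) =  {\rm CH}_{j + \dim U - \dim G}(X_U)$ in the relevant range (and similarly for $Y$). Fixing one such $U$ large enough to compute both $ {\rm CH}_G(X)$ and $ {\rm CH}_G(Y)$ in the given degree, the $G$-equivariant map $f$ induces $f_U : X_U \to Y_U$. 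Since $X_U = [X/G]\times_{{\rm B}G} U_G$ and $Y_U = [Y/G]\times_{{\rm B}G} U_G$, the morphism $f_U$ is the base change of $f^G$ along $Y_U \to [Y/G]$; as $f^G$ is proper, $f_U$ is a proper morphism of algebraic spaces.

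Next I would check that $f_U$ is again a Chow envelope. Given a field extension $K/k$ and a point of $Y_U(K)$ --- that is, a $G$-torsor $P \to \spec K$ together with a $G$-equivariant map $(\psi_Y,\psi_U): P \to Y \times U$ --- the pair $(P,\psi_Y)$ is a point of $[Y/G](K)$, so by hypothesis there is a point $(P',\chi')$ of $[X/G](K)$ and an isomorphism in the groupoid $[Y/G](K)$ from $f^G(P',\chi') = (P', f\circ\chi')$ to $(P,\psi_Y)$, i.e. an isomorphism of torsors $\theta: P' \to P$ with $\psi_Y \circ \theta = f\circ\chi'$. Then $\chi := \chi' \circ \theta^{-1}: P \to X$ satisfies $f\circ\chi = \psi_Y$, so $(P,(\chi,\psi_U)) \in X_U(K)$ maps to $(P,(\psi_Y,\psi_U))$ under $f_U$. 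Hence $f_U$ is a proper Chow envelope of algebraic spaces, and \hyperref[Env]{Remark \ref{Env}} gives that $(f_U)_* :  {\rm CH}(X_U) \to  {\rm CH}(Y_U)$ is surjective. Since the equivariant pushforward $f^G_*$ is, in each degree, identified with $(f_U)_*$ for $U$ chosen as above, it follows that $f^G_* :  {\rm CH}([X/G]) \to  {\rm CH}([Y/G])$ is surjective, i.e. $f^G$ has the property $\sh{N}$.

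The only genuinely delicate point is the bookkeeping in the middle step: one must verify that the Chow envelope property of $f^G$ --- which a priori concerns the groupoids $[X/G](K)$ and $[Y/G](K)$, hence involves the torsor datum attached to each point --- really lifts a point of $Y_U(K)$ to a point of $X_U(K)$ above it, and this is exactly what the identification $\theta$ accomplishes. Everything else is a routine application of the standard equivariant-approximation formalism, together with the fact that $f_U$ is proper because properness is stable under base change.
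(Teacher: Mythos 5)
Your proof is correct and follows essentially the same route as the paper: pass to an Edidin--Graham approximation $U\subseteq V$ in the relevant degree, observe that $f_U:X_U\to Y_U$ is the base change of $f^G$ (hence proper) and inherits the Chow envelope property, and conclude via the algebraic-space case (\hyperref[Env]{Remark \ref{Env}}). Your explicit verification that the Chow envelope property descends to the approximation, via the torsor isomorphism $\theta$, is a careful spelling-out of what the paper dispatches with the remark that the property is stable under base change.
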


\begin{proof}
	We need to prove that 
	$$f_*^G: {\rm CH}_i^G(X) \longrightarrow {\rm CH}_i^G(Y) $$ 
	is surjective for every $i \in \fie{N}$. Fix $i \in \fie{N}$. We consider an approximation $U \subset V$ where ${\rm codim}_V(V\setminus U)>i$ and $G$ acts freely on $U$, therefore $X \times U/G$ is an algebraic space and  
	$$ {\rm CH}_i^G(X)={\rm CH}_{i+l-g}(X \times U/G). $$ 
	 If we consider the following cartesian diagram 
	$$ \xymatrix{ (X \times U)/G \ar[r]^{f_U} \ar[d] & (Y \times U)/G \ar[d]  \\    [X/G] \ar[r]^{f^G}  & [Y/G]  } $$ 
	we get that $f_U(K)$ is surjective for every  extension of fields $K/k$ because $f^G$ has the same property and being surjective is stable under base change. \hyperref[Env]{Remark 3.2} implies the surjectivity of $(f_U)_*$ and therefore of $f_*^G$.
\end{proof}

\begin{osservazione}
	The previous lemma is a natural corollary of \cite[Lemma 3.3]{EdGra}, using the fact that being a Chow envelope is a property invariant under base change.
\end{osservazione}

We recall that a special group scheme $T$ is a group scheme (over $k$) such that every $T$-torsor $P\rightarrow S$ is locally trivial in the Zariski topology, i.e. there exists a Zariski covering $\{U_i \rightarrow S\}_{i\in I}$ such that $P\times_{S} U_i \rightarrow U_i$ is a trivial $T$-torsor for every $i \in I$.
\begin{osservazione}\label{boh}
	Given a special group $T$ acting on an algebraic space $X$ over $k$, then the $T$-torsor $$X \rightarrow [X/T]$$ is clearly a Chow envelope thanks to $T$ being special.
\end{osservazione}
\begin{corollario}\label{ChEnv}
	Let $G,T$ be two group schemes over $k$ and suppose we have an action of $G \times T$ on two algebraic spaces $X$ and $Y$ and a map $f: X \rightarrow Y$  which is $G\times T$-equivariant. Assume that $T$ is a special group. Suppose that $f^G$ is a Chow envelope and $f^{G\times T}$ is a proper representable morphism. Then $f^{G \times T}: [X/(G\times T)] \rightarrow [Y/(G \times T)]$  has the property $\sh{N}$.
\end{corollario}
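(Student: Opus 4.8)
\emph{Proof plan.} The plan is to reduce the statement to \hyperref[lem2]{Lemma \ref{lem2}}, applied with the group scheme $G\times T$ in place of $G$. Since $f$ is $G\times T$-equivariant and $f^{G\times T}$ is assumed proper and representable, that lemma guarantees that \emph{if} $f^{G\times T}$ is a Chow envelope, then it has the property $\sh{N}$, which is exactly the conclusion we want. So the whole content is to check that $f^{G\times T}: [X/(G\times T)]\to [Y/(G\times T)]$ is a Chow envelope, i.e.\ that $f^{G\times T}(K)$ is essentially surjective for every field extension $K/k$.

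To do this I would first write down the $2$-commutative square
$$\xymatrix{ [X/G] \ar[r]^{f^G} \ar[d]_{\pi_X} & [Y/G] \ar[d]^{\pi_Y} \\ [X/(G\times T)] \ar[r]^{f^{G\times T}} & [Y/(G\times T)] }$$
where $\pi_X$ and $\pi_Y$ are the canonical $T$-torsors. Here one uses that the $G$- and $T$-actions on $X$ and on $Y$ commute, so $T$ descends to an action on the quotient stacks $[X/G]$ and $[Y/G]$, with $[X/(G\times T)]\simeq [\,[X/G]/T\,]$ and $[Y/(G\times T)]\simeq [\,[Y/G]/T\,]$, and $\pi_X,\pi_Y$ are the corresponding torsor projections; $G\times T$-equivariance of $f$ makes the square commute.

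Now fix a field extension $K/k$ and an object $y\in [Y/(G\times T)](K)$. Pulling back the $T$-torsor $\pi_Y$ along $y: \spec K\to [Y/(G\times T)]$ produces a $T$-torsor $P\to\spec K$; since $T$ is special, $P$ is Zariski-locally trivial over $\spec K$, hence trivial because the only nonempty open subscheme of $\spec K$ is $\spec K$ itself, and so $P$ has a $K$-point. Such a point is precisely a lift $\tilde y\in [Y/G](K)$ of $y$ along $\pi_Y$ — this is the $K$-valued-point incarnation of \hyperref[boh]{Remark \ref{boh}}. Because $f^G$ is a Chow envelope, there are an object $\tilde x\in [X/G](K)$ and an isomorphism $f^G(K)(\tilde x)\simeq\tilde y$. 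Setting $x:=\pi_X(K)(\tilde x)\in [X/(G\times T)](K)$ and using commutativity of the square, we get $f^{G\times T}(K)(x)\simeq \pi_Y(K)(\tilde y)\simeq y$. Since $K$ and $y$ were arbitrary, $f^{G\times T}$ is a Chow envelope, and \hyperref[lem2]{Lemma \ref{lem2}} yields the property $\sh{N}$.

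The proof is mostly bookkeeping, and the single step that actually uses a hypothesis is the lift of $y$ to $\tilde y$: it is here that specialness of $T$ enters, forcing the pulled-back $T$-torsor over the field $\spec K$ to split. The only point requiring care is that the residual $T$-actions on $[X/G]$ and $[Y/G]$ are compatible with $f^G$, so that the square above is genuinely $2$-commutative — which is immediate from $f$ being $G\times T$-equivariant — and that one keeps track of isomorphisms (rather than equalities) of objects throughout, as the definition of Chow envelope requires.
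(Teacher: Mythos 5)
Your proof is correct and follows essentially the same route as the paper: both reduce to \hyperref[lem2]{Lemma \ref{lem2}} applied to $G\times T$ and verify the Chow envelope property via the commutative square whose vertical maps are the $T$-torsors $[X/G]\to[X/(G\times T)]$ and $[Y/G]\to[Y/(G\times T)]$, using specialness of $T$ (as in \hyperref[boh]{Remark \ref{boh}}) to lift $K$-points. Your write-up merely makes explicit the lifting step that the paper leaves implicit.
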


\begin{proof}
	If we consider the cartesian diagram of quotient stacks 
	$$ \xymatrix{ [X/G] \ar[r]^{f^G} \ar[d] & [Y/G] \ar[d] \\ 
	              [X/(G\times T)] \ar[r]^{f^{G\times T}} & [Y/(G \times T)]} $$
    we notice that the two vertical maps are $T$-torsors. The \hyperref[boh]{Remark 3.5} easily implies that $f^{G\times T}$ is a Chow envelope. Therefore $f^{G\times T}$ has the property $\sh{N}$.
\end{proof}

Let us recall the setting we are studying. We have a morphism  
$$ \varphi_n:\widetilde{\fie{A}}(n) \rightarrow \fie{A}(n) $$ 
which is defined on points as $(f,s) \mapsto f$; it induces a morphism $\phi_n:\fie{P}(2^n,1) \rightarrow \fie{P}^n$ for every $n \in \fie{N}$. In the case of smooth hyperelliptic curves (without the datum of the section), we have the following exact sequence:
$$ \xymatrix{{\rm CH}_{{\rm GL}_2}(\Delta) \ar[r] & {\rm CH}_{{\rm GL}_2}(\fie{A}(2g+2)) \ar[r] & {\rm CH}(\st{H}_g) \ar[r] & 0 } $$ 
where $\Delta$ is the discriminant locus, which is naturally $\fie{G}_m$-invariant with respect to the standard action (see \cite[Corollary 4.7]{ArVis}). By abuse of notation we denote by $\Delta$ the image of the discrimant locus in $\fie{P}^N$. We know that, if we define 
$$ \Delta_r:=\{ h \in \fie{P}^N \vert h=f^2g \textrm{  in some field extension, with deg}(f)=r \},$$ 
 for every $r\leq N/2$ (recall that $N:=2g+2$), the chain of closed subsets $\Delta_{r+1} \subset \Delta_r$ is a stratification of $\Delta$ and the coproduct of the maps
$$ \pi_r: \fie{P}^r \times \fie{P}^{N-2r} \longrightarrow \fie{P}^N$$ 
defined by $\pi_r(h,g)=h^2g$ forms a Chow envelope for $\Delta$ when ${\rm char}(k)>N-2$. (see \cite[Lemma 3.2]{Vis3}). 

If we denote the projectivization of $\widetilde{\Delta}$ in $\fie{P}(2^N,1)$ by $\overline{\Delta}$, we clearly have $\overline{\Delta}=\phi_N^{-1}(\Delta)$ (set-theoretically). Thus, we consider the pullback of the stratification of $\Delta$ through the map $\phi_N$. To be precise, we have the stratification $\overline{\Delta}_{r+1}\subset \overline{\Delta}_r $ given by the following:
$$ \overline{\Delta}_r:=\{(h,s) \in \fie{P}(2^N,1) \textrm{ such that, in some field extension, } h=f^2g \textrm{ with } {\rm deg}(f) = r\}.$$

Our aim is to construct a Chow envelope (of quotient stacks) for $\overline{\Delta} \subset \fie{P}(2^N,1)$. We recall that $\widetilde{\fie{A}}(n)$ is defined as the closed subscheme of $\fie{A}(n) \times \fie{A}^1$  given by the pairs $$(f,s) \in \fie{A}(n) \times \fie{A}^1$$ satisfying the equation $f(0,1)=s^2$. 
We consider the morphisms of schemes 
$$ c_r:\fie{A}(r) \times \widetilde{\fie{A}}(N-2r) \longrightarrow \widetilde{\fie{A}}(N) $$ 
defined by
 \begin{equation*}\label{c_r}
    (f,(g,s)) \mapsto (f^2g,f(0,1)s). \tag{*}
 \end{equation*} 
 Furthermore, we endow $\fie{A}(r) \times \widetilde{\fie{A}}(N-2r)$ with a $T$-action.

  \begin{osservazione} \label{act}
  	We have to define the $T$-action on our product $\fie{A}(r)\times \widetilde{\fie{A}}(N-2r)$:
  	\begin{itemize}
  		\item[i)] if $g$ is even, the action is described by:
  		$$(t_0,t_1).p(x_0,x_1) :=(t_0t_1)^{g/2}p(x_0/t_0,x_1/t_1)$$ for every $(t_0,t_1) \in T$, for every $p \in \fie{A}(r)$;
  		$$(t_0,t_1).(q(x_0,x_1),s):= (q(x_0/t_0,x_1/t_1),t_1^{r-g-1}s)$$ for every $(t_0,t_1)\in T$, for every $(q,s) \in \widetilde{\fie{A}}(N-2r);$
  		
  		\item[ii)] if $g$ is odd, the action is described by:
  		$$(\alpha,\rho).p(x_0,x_1) :=\alpha^{-1}\rho^{(g+1)/2}p(x_0/\rho,x_1)$$ for every $(\alpha,\rho) \in T$, for every $p \in \fie{A}(r)$;
  		$$(\alpha,\rho).(q(x_0,x_1),s):= (q(x_0/\rho,x_1),s)$$ for every $(\alpha,\rho)\in T$, for every $(q,s) \in \widetilde{\fie{A}}(N-2r);$
  		
  		a straightforward computation shows that $c_r$ is a $T$-equivariant morphism.
  	\end{itemize}
  \end{osservazione}
  
 To construct the Chow envelope, we need to pass to the projective setting. We need to construct two different morphisms and we will prove that the coproduct is the Chow envelope required. Therefore we consider the action of  $\fie{G}_m \times \fie{G}_m$ on $\fie{A}(r)\times \widetilde{\fie{A}}(N-2r)$ (as always $N:=2g+2$) defined by the product of the two actions:
 $$\lambda\cdot (f_0,\dots,f_r), := (\lambda f_0,\dots,\lambda f_r)$$
  or equivalently $\lambda\cdot f:= \lambda f$	for every $\lambda \in \fie{G}_m$ and $f=(f_0,\dots,f_r)\in \fie{A}(r)$; 
 	$$ \mu \cdot (g_0,\dots,g_{N-2r},s):=(\mu^2g_0,\dots,\mu^2g_{N-2r},\mu s) $$
 	or equivalently $\mu \cdot (g,s):= (\mu^2 g,\mu s)$ for every $\mu \in \fie{G}_m$ and $(g,s) \in \widetilde{\fie{A}}(N-2r)$. We have denoted the quotient stack $[\widetilde{\fie{A}}(N-2r)\setminus 0/\fie{G}_m]$ for the action described above by 
 	$\fie{P}(2^{N-2r},1)$.
Clearly this action commutes with the one of the torus $T$ described in \hyperref[act]{Remark \ref{act}}. The morphism $c_r$ is equivariant for the morphism of group schemes $\fie{G}_m \times \fie{G}_m \times T \rightarrow \fie{G}_m \times T$ described as $$(\lambda,\mu,t) \mapsto (\lambda\mu,t),$$ 
i.e. $c_r((\lambda,\mu,t)\cdot (f,(g,s)))= (\lambda\mu,t)\cdot c_r(f,(g,s))$. 
We denote by $$a_r:\fie{P}^r \times \fie{P}(2^{N-2r},1) \rightarrow \fie{P}(2^N,1)$$ the morphism induced by $c_r$ on the quotients of $(\fie{A}(r)\setminus 0) \times (\widetilde{\fie{A}}(N-2r)\setminus 0)$ by $\fie{G}_m\times \fie{G}_m$ and of $\widetilde{\fie{A}}(N)\setminus0$ by $\fie{G}_m$. 

\begin{osservazione}
	Unfortunately, the morphism $a_r$ is not a Chow envelope of $\overline{\Delta}_r$. Consider in fact an object $(h,0) \in \overline{\Delta}_r(k) \subset \fie{P}(2^N,1)(k)$ such that $h=f^2g$ where $f,g \in k[x_0,x_1]$ are homogenous polynomials of degree $r, N-2r$ respectively with $g(0,1) \in k\setminus k^2$ and $f(0,1)=0$. Thus, if we hope to find an element $(g,s)$ in $\fie{P}(2^{N-2r},1)$ such that $g(0,1)=s^2$, we need to pass to an extension of $k$. Therefore it will be a surjective morphism of algebraic stacks, but not a Chow envelope.
\end{osservazione}
We need then to construct another morphism to have a Chow envelope in the case $f(0,1)=0$. The construction is the following. Consider the closed immersion $i_r:\fie{A}(r-1) \hookrightarrow \fie{A}(r)$ defined by $$ (f_0,\dots,f_{r-1}) \mapsto (f_0,\dots,f_{r-1},0)$$ and we let $U_r$ be the open complement of this closed subset. Notice that the closed immersion can be expressed in the language of homogenous polynomials as $f \mapsto x_0f$, if $f \in \fie{A}(r-1)$. 

We consider the morphism 
$$ d_r: \fie{A}(r-1) \times \fie{A}(N-2r) \rightarrow \widetilde{\fie{A}}(N)$$ defined by the equation 
\begin{equation*}\label{d_r}
	(f,g) \mapsto ((x_0f)^2g,0) \tag{**}
\end{equation*}
and a $\fie{G}_m$-action on $\fie{A}(N-2r)$ described by 
$$ \mu \cdot (g_0,\dots,g_{N-2r}):=(\mu^2g_0,\dots,\mu^2g_{N-2r})$$ 
or equivalently $\mu\cdot g:=\mu^2 g$ for $\mu \in \fie{G}_m$ and $g \in \fie{A}(N-2r)$. We will denote by $\fie{P}(2^{N-2r+1})$ the quotient stack $[\fie{A}(N-2r) \setminus 0/\fie{G}_m]$ with this action. In the same way, $d_r$ is equivariant for the group scheme homomorphism $\fie{G}_m\times \fie{G}_m \times T \rightarrow \fie{G}_m \times T$ and therefore gives us the morphism $b_r:\fie{P}^{r-1} \times \fie{P}(2^{N-2r+1}) \rightarrow \fie{P}(2^N,1)$. In this case the action of $T$ on $\fie{A}(N-2r)$ is just the restriction to the diagonal torus $T$ of the standard action $A\cdot f(x):= f(A^{-1}x)$ for every $A\in {\rm GL}_2$ and $f\in \fie{A}(N-2r)$.
\begin{lemma}\label{b}
In the setting above, the two maps 
	$$b_r: \fie{P}^{r-1} \times \fie{P}(2^{N-2r+1}) \longrightarrow \fie{P}(2^N,1)$$ 
and 
	  $$a_r: \fie{P}^r \times \fie{P}(2^{N-2r},1)  \longrightarrow \fie{P}(2^N,1) $$ 
are representable proper morphisms of quotient stacks.
\end{lemma}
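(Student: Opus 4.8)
The plan is to prove the two assertions separately, treating $a_r$ and $b_r$ in parallel since they are built in the same way: each is the morphism of quotient stacks induced by a $\fie{G}_m\times\fie{G}_m$-equivariant morphism of affine schemes ($c_r$, resp.\ $d_r$) along the multiplication homomorphism $\psi\colon\fie{G}_m\times\fie{G}_m\to\fie{G}_m$, $(\lambda,\mu)\mapsto\lambda\mu$, after removing the origin from the relevant affine spaces. For representability I would use the standard criterion that a morphism $[X/G]\to[Y/H]$ coming from a homomorphism $\psi\colon G\to H$ and a $\psi$-equivariant map $f\colon X\to Y$ is representable by algebraic spaces if and only if $\ker\psi$ acts freely on $X$: at a geometric point of $[X/G]$ represented by $x\in X$ the induced map on automorphism group schemes is the restriction ${\rm Stab}_G(x)\to{\rm Stab}_H(f(x))$ of $\psi$, and this is a monomorphism for every $x$ precisely when $\ker\psi\cap{\rm Stab}_G(x)$ is trivial for all $x$.

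In our situation $\ker\psi$ is the subgroup $\{(\lambda,\lambda^{-1})\}\cong\fie{G}_m$, and on the source of $a_r$, namely $(\fie{A}(r)\setminus 0)\times(\widetilde{\fie{A}}(N-2r)\setminus 0)$, this copy of $\fie{G}_m$ acts on the first factor through the standard scaling action of $\fie{G}_m$ on $\fie{A}^{r+1}\setminus 0$ (and, for $b_r$, on $(\fie{A}(r-1)\setminus 0)\times(\fie{A}(N-2r)\setminus 0)$ through scaling on $\fie{A}^{r}\setminus 0$). Since scaling on an affine space with the origin removed is a free action, $\ker\psi$ acts freely on the whole product, so $a_r$ and $b_r$ are representable.

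For properness the point is that every stack in sight is proper, or at least separated, over $k$, so properness of $a_r$ and $b_r$ follows formally. The target $\fie{P}(2^N,1)$ is a proper — in particular separated — algebraic stack over $k$: it is a quotient $[(\fie{A}^{m+1}\setminus 0)/\fie{G}_m]$ for a $\fie{G}_m$-action with strictly positive weights, which has a projective coarse moduli space (a weighted projective space) and finite inertia, and satisfies the valuative criterion of properness by the same denominator-clearing argument as for ordinary projective space (as ${\rm char}(k)\neq 2$ it is moreover Deligne--Mumford). By the same reasoning the sources $\fie{P}^r\times\fie{P}(2^{N-2r},1)$ and $\fie{P}^{r-1}\times\fie{P}(2^{N-2r+1})$, being products of a projective space with such a stack, are proper over $k$. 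Since $a_r$ and $b_r$ are of finite type, their composites to $\spec k$ are proper, and their targets are separated over $k$, the cancellation property for properness — a morphism from a proper stack to a separated stack is proper — shows that $a_r$ and $b_r$ are proper. Together with the previous step this proves they are representable proper morphisms of quotient stacks. I do not expect a genuine obstacle here; the only point that calls for a little care is the properness of the weighted-projective stacks $[(\fie{A}^{m+1}\setminus 0)/\fie{G}_m]$, whose essential content is the "projective" nature of the construction (positive weights, origin removed), and this is entirely standard.
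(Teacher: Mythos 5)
Your proposal is correct and follows essentially the same route as the paper: representability via the stabilizer-injectivity criterion for morphisms of quotient stacks induced by a group homomorphism (you usefully spell out the verification, reducing it to the kernel $\{(\lambda,\lambda^{-1})\}$ acting freely by scaling on the punctured affine factor, which the paper leaves implicit), and properness from the source being a proper stack and the target being separated over $k$.
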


\begin{proof}
  Representability follows directly from the following fact (see \cite[Lemma 99.6.2]{stacks-project}): suppose $G$ and $H$ are two group schemes with a group homomorphism $\phi:G \rightarrow H$ and suppose $X$ is a $G$-scheme, $Y$ is a $H$-scheme and $f: X \rightarrow Y$ is an equivariant morphism, i.e. $f(g\cdot x)=\phi(g)\cdot f(x)$ for every $x \in X(S)$ and $g\in G(S)$. Then the induced morphism $[X/G]\rightarrow [Y/H]$ between the quotient stacks is representable if and only if for every $k$-scheme $S$ and for every $x \in X(S)$ the following map
	$$ {\rm Stab}_G(x) \rightarrow {\rm Stab}_H(f(x)) $$ is injective, where ${\rm Stab}_G(x):=\{g\in G(S)\vert g\cdot x=x\}$. 
	
	Properness follows from the fact that the source of the morphism is a proper stack and the target is separated (see \cite[Proposition 10.1.6]{Oll})
\end{proof}

\begin{osservazione}\label{dia}
	If we consider now the commutative diagram 
	$$\xymatrix{  \fie{A}(r) \times \widetilde{\fie{A}}(N-2r) \ar[r]^<<<<{c_r} \ar[d]^{{\rm Id}\times \varphi_{N-2r}} &  \widetilde{\fie{A}}(N) \ar[d]^{\varphi_N} \\ 
	              \fie{A}(r) \times \fie{A}(N-2r) \ar[r]^<<<<{\pi_r} & \fie{A}(N)},$$
  we can pass to the projective setting to get the following commutative diagram
  $$\xymatrix{  \fie{P}^r \times \fie{P}(2^{N-2r},1) \ar[r]^<<<<{a_r} \ar[d]^{{\rm Id}\times \phi_{N-2r}} &  \fie{P}(2^N,1) \ar[d]^{\phi_N} \\ 
  	\fie{P}^r \times \fie{P}^{N-2r} \ar[r]^<<<<<<{\pi_r} & \fie{P}^{N}}$$
  which shows that $a_r$ factors through the closed immersion $\overline{\Delta}_r \subset \fie{P}(2^N,1)$. In a similar way, the same can be shown for $b_r$. 
   \end{osservazione}

\begin{lemma}\label{lem}
	Consider the morphisms
	$$b_r: \fie{P}^{r-1} \times \fie{P}(2^{N-2r+1}) \longrightarrow \overline{\Delta}_r $$ 
	and 
	$$a_r: \fie{P}^r \times \fie{P}(2^{N-2r},1)  \longrightarrow \overline{\Delta}_r $$ 
	and let $\omega_r$ be the coproduct morphism. If ${\rm char}(k)>2g$, then $\omega_r$ restricted to the preimage of $\overline{\Delta}_r\setminus\overline{\Delta}_{r+1}$ is a Chow envelope for every $1\leq r \leq N/2$ (where $\overline{\Delta}_{N/2+1}:=\emptyset$).
\end{lemma}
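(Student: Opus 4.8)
The plan is to show that $\omega_r$ is essentially surjective on $K$-points over the locally closed stratum $\overline{\Delta}_r\setminus\overline{\Delta}_{r+1}$, for every field extension $K/k$. So fix $K/k$ and an object $(h,s)\in(\overline{\Delta}_r\setminus\overline{\Delta}_{r+1})(K)\subseteq\fie{P}(2^N,1)(K)$. By definition of the stratum, after possibly enlarging $K$ we may write $h=f^2 g$ with $f,g\in K[x_0,x_1]$ homogeneous of degrees $r$ and $N-2r$ respectively, where $g$ is \emph{squarefree} (this uses $\mathrm{char}(k)>2g\geq N-2$ so that squarefree factorizations behave as in characteristic zero, exactly as in \cite[Lemma 3.2]{Vis3}); the condition $(h,s)\notin\overline{\Delta}_{r+1}$ means $\gcd(f,g)=1$ and in particular $g\neq 0$. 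Since the class of $(h,s)$ in the weighted projective stack only remembers $h$ up to the $\fie{G}_m$-scaling $\lambda\cdot(h,s)=(\lambda^2 h,\lambda s)$, and the fibre of $\widetilde{\fie{A}}(N)\to\fie{A}(N)$ over $h$ consists of the two points $(h,\pm\sqrt{h(0,1)})$, the datum of $(h,s)$ is equivalent to that of $h$ together with a choice of square root $s$ of $h(0,1)=f(0,1)^2 g(0,1)$, all taken projectively.

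The key case distinction is whether $f(0,1)=0$, i.e.\ whether $x_0\mid f$; this is precisely the distinction between the locus where $a_r$ suffices and the locus where one needs $b_r$.

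\emph{Case 1: $f(0,1)\neq 0$.} Then $h(0,1)=f(0,1)^2 g(0,1)$, and since $s^2=h(0,1)$ we get $s=f(0,1)\cdot s'$ for a square root $s'$ of $g(0,1)$ lying in $K$ (no extension needed, because $s/f(0,1)\in K$ already). Thus $(g,s')\in\widetilde{\fie{A}}(N-2r)(K)$, and it is nonzero since $g\neq 0$; so it defines a $K$-point of $\fie{P}(2^{N-2r},1)$. Together with the $K$-point of $\fie{P}^r$ determined by $f$, this gives a point of the source of $a_r$, and by the defining formula \eqref{c_r} its image under $a_r$ is the class of $(f^2 g, f(0,1)s')=(h,s)$ up to the scaling identifications. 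One checks these identifications match the $\fie{G}_m\times\fie{G}_m\to\fie{G}_m$ equivariance recorded before \hyperref[b]{Lemma \ref{b}}, so $a_r$ hits $(h,s)$.

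\emph{Case 2: $f(0,1)=0$.} Then $x_0\mid f$, say $f=x_0 f_1$ with $f_1\in K[x_0,x_1]$ homogeneous of degree $r-1$; since $\gcd(f,g)=1$ we have $g(0,1)\neq 0$. Now $h(0,1)=0$, so $s^2=0$ and hence $s=0$ (this is where we genuinely use that we are on the stratum and not its closure — there is no need to adjoin $\sqrt{g(0,1)}$). The pair $(f_1,g)\in\fie{A}(r-1)(K)\times\fie{A}(N-2r)(K)$ has $g\neq 0$, hence defines a $K$-point of $\fie{P}^{r-1}\times\fie{P}(2^{N-2r+1})$, and by \eqref{d_r} its image under $b_r$ is the class of $((x_0 f_1)^2 g, 0)=(f^2 g,0)=(h,0)$; again matching the scaling conventions shows $b_r$ hits $(h,s)$. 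The isomorphism $\eta$ required in the definition of Chow envelope is supplied in either case by the residual $\fie{G}_m$-automorphisms, since weighted-projective points are only defined up to those.

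The remaining point is functoriality in $K$ and the observation that these two cases exhaust $\overline{\Delta}_r\setminus\overline{\Delta}_{r+1}$: any point of that stratum admits, over a suitable extension, a factorization $h=f^2 g$ with $g$ squarefree and $\gcd(f,g)=1$, and $f(0,1)$ is then either zero or not. I expect the main obstacle to be bookkeeping rather than conceptual: one must check carefully that the $\fie{G}_m$-weights in \eqref{c_r} and \eqref{d_r} (the "$f(0,1)s$" and the "$0$" in the second coordinate) are compatible with the weighted scaling $\lambda\cdot(h,s)=(\lambda^2h,\lambda s)$ and with the torus action of \hyperref[act]{Remark \ref{act}}, so that the set-theoretic surjection is genuinely a surjection of \emph{stacks} at the level of $K$-points (with the isomorphisms $\eta$); this is the content that \hyperref[dia]{Remark \ref{dia}} and \hyperref[b]{Lemma \ref{b}} were set up to make routine. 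The squarefreeness/characteristic hypothesis $\mathrm{char}(k)>2g$ is used exactly once, to guarantee the factorization $h=f^2g$ with $g$ squarefree exists over some extension and is unique, mirroring \cite[Lemma 3.2]{Vis3}.
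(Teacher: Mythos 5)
Your proposal is correct and follows essentially the same route as the paper: descend the factorization $h=f^2g$ (with $g$ squarefree) to $K$ via \cite[Lemma 3.2]{Vis3}, then split into the cases $f(0,1)\neq 0$ (hit by $a_r$ with $s'=s/f(0,1)$) and $f(0,1)=0$ (forcing $s=0$, hit by $b_r$ with $f=x_0f_1$). The only slip is the phrase ``after possibly enlarging $K$'': a Chow envelope requires essential surjectivity on $K$-points for the given $K$, so the factorization must be taken over $K$ itself, which is exactly what the cited lemma provides and what you in fact use afterwards.
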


\begin{proof}
  Let us denote $\overline{\Delta}_r\setminus\overline{\Delta}_{r+1}$ by $D$. Consider $K$ an extension of $k$ and $(h,t)\in D(K)$, thus $h \in \Delta_r\setminus \Delta_{r+1}$ and therefore $h=f^2g$ with $f,g \in K[x_0,x_1]$ homogeneous polynomials, where $g$ is square free and deg$f=r$ (see \cite[Lemma 3.2]{Vis3}). Moreover, if  $f(0,1)\neq 0$ the equation
  $$ t^2=h(0,1)=f(0,1)^2g(0,1)$$
  gives us that $a_r(K)(f,(g,s))=(h,t)$ for $s=h(0,1)/f(0,1)$ (we need that $s^2=g(0,1)$). On the other hand, if $f(0,1)=0$ ($f=x_0f'$) then $t=0$ therefore we can consider $(f',g)$ as an element of $( \fie{P}^{r-1} \times \fie{P}(2^{N-2r+1}))(K)$ and get 
  $b_r(f',g)=(h,0)$. We have then proved the statement.
\end{proof}
\begin{proposizione}
	Suppose ${\rm char}(k) > 2g$. The morphism 
	$$ \omega := \bigsqcup_{r=1}^{g+1} \omega_r $$
	 is surjective at the level of $T$-equivariant Chow ring, i.e. $$\omega_*: \bigoplus_{r=1}^{g+1}{\rm CH}_T\big(\fie{P}^{r-1}\times \fie{P}(2^{N-2r+1})\big) \oplus {\rm CH}_T\big(\fie{P}^r \times \fie{P}(2^{N-2r},1)\big) \rightarrow {\rm CH}_T(\overline{\Delta})$$
	  is surjective.
\end{proposizione}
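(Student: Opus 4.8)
The plan is to argue stratum by stratum along $\overline{\Delta}_{g+1}\subseteq\dots\subseteq\overline{\Delta}_1=\overline{\Delta}$, gluing the strata by the localization exact sequence for $T$-equivariant Chow groups and feeding in Lemma~\ref{lem} at each step. Fix $r$ and set $\overline{\Delta}_r^{\circ}:=\overline{\Delta}_r\setminus\overline{\Delta}_{r+1}$; let $Z_r\subseteq\fie{P}^{r-1}\times\fie{P}(2^{N-2r+1})\sqcup\fie{P}^r\times\fie{P}(2^{N-2r},1)$ be the open $\omega_r$-preimage of $\overline{\Delta}_r^{\circ}$. Since a split torus is special, I would apply Corollary~\ref{ChEnv} with $G=\fie{G}_m$ (the weighted $\fie{G}_m$ defining the weighted projective stacks) and with the $2$-dimensional torus $T$ playing the role of the special group: the restriction $Z_r\to\overline{\Delta}_r^{\circ}$ of $\omega_r$ is proper and representable, being the base change of the proper representable $\omega_r$ (Lemma~\ref{b}) along the open immersion $\overline{\Delta}_r^{\circ}\hookrightarrow\overline{\Delta}_r$, and at the level of $\fie{G}_m$-quotients it is a Chow envelope by Lemma~\ref{lem} (this uses ${\rm char}(k)>2g$), the pointwise construction of preimages there being insensitive to the torus. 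Hence $Z_r\to\overline{\Delta}_r^{\circ}$ has the property $\sh{N}$, so
$$ (\omega_r)_*\colon {\rm CH}_T(Z_r)\longrightarrow {\rm CH}_T(\overline{\Delta}_r^{\circ}) $$
is surjective. By the localization sequence applied to the open substack $Z_r$ of the smooth source of $\omega_r$, the restriction to ${\rm CH}_T(Z_r)$ from the Chow group of the whole source is also surjective; so every class of ${\rm CH}_T(\overline{\Delta}_r^{\circ})$ is $(\omega_r)_*$ of (the restriction of) a class already defined on $\fie{P}^{r-1}\times\fie{P}(2^{N-2r+1})\sqcup\fie{P}^r\times\fie{P}(2^{N-2r},1)$.

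\textbf{Dévissage.} I would then prove by descending induction on $r$, from $g+1$ down to $1$, that the image of $\bigoplus_{s\ge r}(\omega_s)_*$ (a sum over the two summands $(a_s)_*$ and $(b_s)_*$ for each $s\ge r$) — where for $s>r$ the morphism $\omega_s$ is viewed as a proper map to $\overline{\Delta}_r$ through the closed immersions $\overline{\Delta}_s\subseteq\overline{\Delta}_{r+1}\hookrightarrow\overline{\Delta}_r$ — is all of ${\rm CH}_T(\overline{\Delta}_r)$. The base case $r=g+1=N/2$ holds because $\overline{\Delta}_{g+2}=\emptyset$, so $\overline{\Delta}_{g+1}^{\circ}=\overline{\Delta}_{g+1}$ and this is exactly the surjectivity established above. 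For the inductive step, let $\alpha\in{\rm CH}_T(\overline{\Delta}_r)$; its image in ${\rm CH}_T(\overline{\Delta}_r^{\circ})$ equals $(\omega_r)_*(\tilde\beta)$ for a suitable $\tilde\beta$ on the source of $\omega_r$, so $\alpha-(\omega_r)_*(\tilde\beta)$ restricts to $0$ in ${\rm CH}_T(\overline{\Delta}_r^{\circ})$ and hence, by exactness of
$$ {\rm CH}_T(\overline{\Delta}_{r+1})\longrightarrow {\rm CH}_T(\overline{\Delta}_r)\longrightarrow {\rm CH}_T(\overline{\Delta}_r^{\circ})\longrightarrow 0, $$
it is pushed forward from some $\gamma\in{\rm CH}_T(\overline{\Delta}_{r+1})$. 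The inductive hypothesis puts $\gamma$ in the image of $\bigoplus_{s\ge r+1}(\omega_s)_*$, and functoriality of proper pushforward along $\overline{\Delta}_{r+1}\hookrightarrow\overline{\Delta}_r$ then puts $\alpha$ in the image of $\bigoplus_{s\ge r}(\omega_s)_*$. Taking $r=1$ and recalling $\overline{\Delta}=\overline{\Delta}_1$ gives the proposition.

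\textbf{Main obstacle.} The geometric content — that $\omega_r$ really surjects onto the $r$-th stratum — is already Lemma~\ref{lem}, so what remains is essentially bookkeeping: verifying that properness, representability, and the Chow-envelope property descend to the restrictions $Z_r\to\overline{\Delta}_r^{\circ}$ (the first two by base change from Lemma~\ref{b}, the last because being a Chow envelope is tested on field-valued points and is unaffected by the torus factor), that Corollary~\ref{ChEnv} is applied with the two groups chosen so that its conclusion ${\rm CH}([\,\cdot\,/(\fie{G}_m\times T)])$ is literally the $T$-equivariant Chow group of the relevant weighted projective stack, and that the proper pushforwards along the nested closed immersions $\overline{\Delta}_{g+1}\subseteq\dots\subseteq\overline{\Delta}_1$ compose compatibly so that the induction closes. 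None of this looks like a genuine difficulty.
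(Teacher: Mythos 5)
Your argument is correct, and it rests on the same two ingredients as the paper's proof (Lemma \ref{lem} for the stratum-wise Chow-envelope property and Corollary \ref{ChEnv} to pass from ``Chow envelope'' to property $\sh{N}$ in the presence of the special torus $T$), but you assemble them differently. The paper exploits that being a Chow envelope is a condition tested one field-valued point at a time: any $y\in\overline{\Delta}(K)$ lies in exactly one stratum $\overline{\Delta}_r\setminus\overline{\Delta}_{r+1}$, Lemma \ref{lem} produces a preimage of $y$ in the source of $\omega_r$, hence $\omega$ itself is a Chow envelope of all of $\overline{\Delta}$; one then applies Corollary \ref{ChEnv} a single time to the global map $\omega$ (which is proper and representable by Lemma \ref{b}) and is done. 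You instead first convert the stratum-wise envelope statement into stratum-wise surjectivity of $(\omega_r)_*$ onto ${\rm CH}_T(\overline{\Delta}_r\setminus\overline{\Delta}_{r+1})$ and then glue by descending induction through the localization sequences ${\rm CH}_T(\overline{\Delta}_{r+1})\to{\rm CH}_T(\overline{\Delta}_r)\to{\rm CH}_T(\overline{\Delta}_r\setminus\overline{\Delta}_{r+1})\to 0$. This is a valid d\'evissage (the localization sequence holds for equivariant Chow groups of the possibly singular $\overline{\Delta}_r$, and you correctly invoke compatibility of proper pushforward with restriction to opens to lift classes from $Z_r$ back to the full source of $\omega_r$), but it is more machinery than needed: the paper's one-line globalization at the level of points replaces your entire induction, and it also spares you from having to restrict $\omega_r$ to the open preimage $Z_r$ at all, since the envelope property of the unrestricted $\omega$ is what Corollary \ref{ChEnv} consumes. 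Your version would be the right one in a situation where the covering maps were only defined over the open strata rather than over their closures; here it is correct but heavier than necessary.
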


\begin{proof}
	First, notice that \hyperref[ChEnv]{Corollary \ref{ChEnv}} states that it is enough to prove that $\omega$ is a Chow envelope. Consider the stratification 
	
	$$ 0\subset \overline{\Delta}_{N/2} \subset \dots \subset \overline{\Delta}_1=\overline{\Delta},$$ 
	\hyperref[lem]{Lemma \ref{lem}} states that $\omega_r$ is a Chow envelope restricted to $\overline{\Delta}_r \setminus \overline{\Delta}_{r+1}$. Therefore the coproduct is a Chow envelope for $\overline{\Delta}$, proving the statement.
\end{proof}
Therefore, we just need to describe the image of the morphisms $(b_r)_*$ and $(a_r)_*$ inside ${\rm CH}_T(\fie{P}(2^N,1))$.
\section{Description of the Chow ring of $\st{H}_{g,1}$}\label{de}
Finally, we can explicitly compute the relations in ${\rm CH}_T(\fie{P}(2^N,1))$ using the pushforward along $a_r$ and $b_r$ in $\fie{P}(2^N,1)$ (these two morphisms are the ones induced on the weighted projective stacks by  $c_r$ and $d_r$, whose description is explicited in the equations \hyperref[c_r]{(*)} and \hyperref[d_r]{(**)} respectively). Our goal is to prove that every relation for $\st{H}_{g,1}$ is in the image of the map $\varphi_{2g+2}^*: {\rm CH}_T(\fie{A}(2g+2)) \rightarrow {\rm CH}_T(\widetilde{\fie{A}}(2g+2))$.
\begin{osservazione}
Let us consider the two Chow groups computed using \hyperref[c]{Proposition \ref{c}}:
\begin{itemize}
	\item[i)] $$ {\rm CH}_T(\fie{P}^r\times \fie{P}(2^{N-2r},1)) \simeq \frac{\fie{Z}[T_0,T_1,u_1,v_1]}{(p_1(u_1,T_0,T_1),q_1(v_1,T_0,T_1))} $$ 
where $u_1=c_1(\sh{O}_{\fie{P}^r}(1))$, $v_1=c_1(\sh{O}_{\fie{P}(2^{N-2r},1)}(1))$  and $p_1$  is a monic polynomial in the variable $u_1$ of degree $r+1$;
\item[ii)] $$ {\rm CH}_T(\fie{P}^{r-1}\times \fie{P}(2^{N-2r+1})) \simeq \frac{\fie{Z}[T_0,T_1,u_2,v_2]}{(p_2(u_2,T_0,T_1),q_2(v_2,T_0,T_1))} $$ 
where $u_2=c_1(\sh{O}_{\fie{P}^{r-1}}(1))$, $v_2=c_1(\sh{O}_{\fie{P}(2^{N-2r+1})}(1))$   and $p_2$  is a monic polynomial in the variable $u_2$ of degree $r$.
\end{itemize}

Notice that because $c_r$ (respectively $d_r$) is $T$-equivariant, the image of the pushforward of $a_r$ (respectively $b_r$) will be the ideal generated by the pushforwards of the elements of the form $u_1^iv_1^j$ (respectively $u_2^iv_2^j$) where $i\leq r$ (respectively $i\leq r-1$). 
Recall the description of the $T$-equivariant Chow ring of $\fie{P}(2^N,1)$:
	$$ {\rm CH}_T(\fie{P}(2^N,1)) \simeq \frac{\fie{Z}[T_0,T_1,t]}{(P(t,T_0,T_1))}. $$  
\end{osservazione}
 
\begin{lemma}
	Using the notation above, we have the following equations:
$$ a_r^*(t)= u_1+v_1 $$
and 
$$ b_r^*(t) = u_2 + v_2.$$

\end{lemma}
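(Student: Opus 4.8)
The plan is to compute the pullbacks $a_r^*(t)$ and $b_r^*(t)$ directly from the definitions of the maps $c_r$, $d_r$ and the identification of the various (weighted) projective stacks with quotients by $\mathbb{G}_m$. Recall that $t = c_1(\mathcal{O}_{\mathbb{P}(2^N,1)}(1))$ corresponds, under $\mathbb{P}(2^N,1) = [(\widetilde{\fie{A}}(N)\setminus 0)/\fie{G}_m]$, to the first $\fie{G}_m$-equivariant Chern class $h$ of the weight-one character, i.e. to the tautological line bundle coming from the $\fie{G}_m$-torsor $\widetilde{\fie{A}}(N)\setminus 0 \to \mathbb{P}(2^N,1)$; similarly $u_1 = c_1(\mathcal{O}_{\mathbb{P}^r}(1))$ and $v_1 = c_1(\mathcal{O}_{\mathbb{P}(2^{N-2r},1)}(1))$ come from the two $\fie{G}_m$-torsors $\fie{A}(r)\setminus 0 \to \mathbb{P}^r$ and $\widetilde{\fie{A}}(N-2r)\setminus 0 \to \mathbb{P}(2^{N-2r},1)$.

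The key observation is that $a_r$ is induced by $c_r$ on the quotient by $\fie{G}_m\times\fie{G}_m$ (on the source) versus $\fie{G}_m$ (on the target), and $c_r$ is equivariant along the homomorphism $(\lambda,\mu)\mapsto \lambda\mu$. Thus the character $z\mapsto z$ of the target $\fie{G}_m$ pulls back to the character $(\lambda,\mu)\mapsto\lambda\mu$ of $\fie{G}_m\times\fie{G}_m$, which is the tensor product of the two weight-one characters of the factors. Passing to first Chern classes of the associated line bundles, this says exactly that $a_r^*(t) = u_1 + v_1$. The point that makes this clean is that $t$ equals $h$, the equivariant Chern class of a character, and pullback of equivariant Chern classes of characters is computed by composition of the character with the group homomorphism — here giving $\lambda\mu$, whose Chern class decomposes additively. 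An entirely analogous argument applies to $b_r$: it is induced by $d_r$, again equivariant along $(\lambda,\mu)\mapsto\lambda\mu$, where $\lambda$ is the weight-one scaling on $\fie{A}(r-1)$ (giving $u_2$) and $\mu$ is the weight-one character whose square scales $\fie{A}(N-2r)$ (giving $v_2$), so $b_r^*(t) = u_2 + v_2$.

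The one technical subtlety worth being explicit about is the consistency of the weightings: on the target $\mathbb{P}(2^N,1)$, the $\fie{G}_m$ acts with weights $(2,\dots,2,1)$, and the defining formula $c_r(f,(g,s)) = (f^2 g, f(0,1)s)$ must intertwine the source scalings with this weighted scaling. If $f\mapsto \lambda f$ and $(g,s)\mapsto(\mu^2 g,\mu s)$, then $f^2 g \mapsto \lambda^2\mu^2(f^2 g)$ and $f(0,1)s\mapsto \lambda\mu\, f(0,1)s$ — precisely the weight-$(2,1)$ scaling by $\lambda\mu$. This confirms that $c_r$ is $\fie{G}_m$-equivariant along $(\lambda,\mu)\mapsto\lambda\mu$ and hence that $a_r$ is well-defined with $a_r^*(t)=u_1+v_1$; the same check for $d_r(f,g)=((x_0 f)^2 g, 0)$ with $f\mapsto\lambda f$, $g\mapsto\mu^2 g$ gives $((x_0 f)^2 g,0)\mapsto(\lambda^2\mu^2(x_0 f)^2 g,0)$, i.e. scaling by $\lambda\mu$ in the weighted sense, yielding $b_r^*(t)=u_2+v_2$.

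I do not expect any serious obstacle here — the statement is essentially a bookkeeping lemma recording how the tautological classes transform under the Chow-envelope maps, and the only care needed is to match up the $\fie{G}_m$-weight conventions correctly. The mild ``hard part'' is simply being careful that the line bundle whose Chern class is $t$ is the one associated to the full weight-one character of the quotienting $\fie{G}_m$ (not some twist), so that the additive decomposition $t\mapsto u+v$ comes out with coefficient $1$ on each summand rather than, say, a factor of $2$; the weight computation above settles this. Once $a_r^*(t)$ and $b_r^*(t)$ are known, these formulas feed directly into the projection-formula computation of $(a_r)_*$ and $(b_r)_*$ in the next step.
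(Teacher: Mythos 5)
Your argument is correct and rests on the same key facts as the paper's proof: the equivariance of $c_r$ (resp.\ $d_r$) along the multiplication homomorphism $(\lambda,\mu)\mapsto\lambda\mu$, the weight check showing this matches the $(2,\dots,2,1)$-weighted scaling on the target, and the additivity of $c_1$ on the resulting tensor decomposition of character line bundles. The paper packages the functoriality step as an explicit cartesian diagram of $\fie{G}_m$-torsors through an intermediate quotient, whereas you invoke directly that the pullback of the line bundle of a character $\chi$ along a $\phi$-equivariant map is the line bundle of $\chi\circ\phi$; this is the same content, just stated more compactly.
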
 

\begin{proof}
	Let us show the formula for $a_r$. We recall the morphism 
	$$c_r: (\fie{A}(r) \setminus 0) \times (\widetilde{\fie{A}}(N-2r) \setminus 0) \longrightarrow \widetilde{\fie{A}}(N) \setminus 0 $$
	defined by the formula (see \hyperref[c_r]{(*)})
	$$ c_r(f,(g,s))=(f^2g, f(0,1)s) $$
	and we consider the action of $\fie{G}_m$ on the product $(\fie{A}(r) \setminus 0) \times (\widetilde{\fie{A}}(N-2r) \setminus 0)$ defined by the formula 
	$$ \lambda.(f,(g,s))= (\lambda f, (\lambda^{-2}g, \lambda^{-1} s)) $$
	for every $\lambda \in \fie{G}_m$.
	A straightforward computation shows that $c_r(\lambda.(f,(g,s)))=c_r(f,(g,s))$
	therefore we have an induced $T$-equivariant morphism of stacks (in fact it is a morphism of schemes; see \cite[Proposition 23]{EdGra})
    $$ [c_r]:\frac{(\fie{A}(r) \setminus 0) \times (\widetilde{\fie{A}}(N-2r) \setminus 0)}{\fie{G}_m} \longrightarrow \widetilde{\fie{A}}(N) \setminus 0$$
    which fits into the following commutative diagram:
    $$ \xymatrix{ \frac{(\fie{A}(r) \setminus 0) \times (\widetilde{\fie{A}}(N-2r) \setminus 0)}{\fie{G}_m} \ar[rr]^{[c_r]} \ar[d] && \widetilde{\fie{A}}(N) \setminus 0  \ar[d] \\ 
                  \fie{P}^r \times \fie{P}(2^{N-2r},1) \ar[rr]^{a_r} && \fie{P}(2^N,1) } $$
	where the vertical maps are the natural projection maps (clearly the left one is still defined after quotienting by $\fie{G}_m$).
We leave as an  easy verification to the reader that the vertical map on the left is in fact a $\fie{G}_m$-torsor under the action described by the formula
	$$ \mu.[f,(g,s)]= [f, (\mu^2 g, \mu s)].$$ 
	
	Using the fact that the category of $G$-torsors over a fixed stack is in fact a groupoid for every $G$ group scheme, we deduce that the diagram above is in fact cartesian and consequently we get the following equality at the level of first Chern classes 
	     $$ c_1^T(\sh{L}) = a_r^*\big(c_1^T(\sh{O}_{\fie{P}(2^N,1)}(-1))\big) $$
	where $\sh{L}$ is the line bundle associated to the $\fie{G}_m$-torsor
	$$ \frac{(\fie{A}(r) \setminus 0) \times (\widetilde{\fie{A}}(N-2r) \setminus 0)}{\fie{G}_m} \longrightarrow \fie{P}^r \times \fie{P}(2^{N-2r},1); $$
	explicitly, the line bundle can be described as 
	$$ \sh{L}=\frac{(\fie{A}(r) \setminus 0) \times (\widetilde{\fie{A}}(N-2r) \setminus 0) \times \fie{A}^1}{\fie{G}_m \times \fie{G}_m} $$ 
	where the action can be described as follows: $$(\lambda,\mu). (f,(g,s),v)= (\lambda f, (\lambda^{-2}\mu^2 g, \lambda^{-1}\mu s), \mu v)$$
	for every $(\lambda,\mu) \in \fie{G}_m \times \fie{G}_m$ and for every point $(f,(g,s),v)$ in $(\fie{A}(r) \setminus 0) \times (\widetilde{\fie{A}}(N-2r) \setminus 0) \times \fie{A}^1$.
	Using the group isomorphism $\fie{G}_m \times \fie{G}_m \rightarrow \fie{G}_m \times \fie{G}_m$ described by the association $(\lambda, \mu) \mapsto (\lambda, \lambda^{-1} \mu)$, we deduce that $\sh{L}$ is in fact the quotient above with the new action 
	$(\lambda,\mu)(f,(g,s),v)= (\lambda f, (\mu^2 g, \mu s), \lambda\mu v)$ which describes exactly the line bundle whose first $T$-equivariant Chern class is $c_1^T(\sh{O}_{\fie{P}^r}(-1))+c_1^T(\sh{O}_{\fie{P}(2^{N-2r},1)}(-1))$.
	This concludes the proof. The same idea can be used to prove the statement for $b_r$. 
\end{proof}

\begin{osservazione}\label{rem}
	Using projection formula, it is immediate to prove that the ideal we are looking for is generated by the pushforwards through the map $a_r$ of $u_1^i$ (respectively through the map $b_r$ of $u_2^i$)  for every $i\leq r$ (respectively  for $i\leq r-1$).
\end{osservazione}

Let us consider the pull-back of the Chow envelope $\pi_r$ through the morphism of algebraic stacks $\phi_N$, i.e. the following cartesian diagrams
$$ \xymatrix{\st{R}_r \ar[d]_{p} \ar[r]&\st{P}_r \ar[d]^q \ar[r]_{\varpi_r}
	& \fie{P}(2^N,1) \ar[d]_{\phi_N} \\ \fie{P}^{r-1}\times \fie{P}^{N-2r} \ar[r]^{i_r \times {\rm Id}} &
	 \fie{P}^r \times \fie{P}^{N-2r} \ar[r]^>>>>>>>>>{\pi_r} & \fie{P}^N, } $$
 in particular we are defining $\st{P}_r$ and $\st{R}_r$ as fiber products of the diagrams above.
The commutative diagram described in \hyperref[dia]{Remark \ref{dia}}
$$ \xymatrix{ \fie{P}^r \times \fie{P}(2^{N-2r},1) \ar[d]_{{\rm Id} \times \phi_{N-2r}} \ar[r]^<<<<<{a_r}
	& \fie{P}(2^N,1) \ar[d]_{\phi_N} \\
	\fie{P}^r \times \fie{P}^{N-2r} \ar[r]^>>>>>>>>>{\pi_r} & \fie{P}^N } $$
induces a morphism of algebraic stacks $\alpha_r:  \fie{P}^r \times \fie{P}(2^{N-2r},1) \rightarrow \st{P}_r$ such that $\varpi_r \circ \alpha_r = a_r$ and $q \circ \alpha_r = {\rm Id} \times \phi_{N-2r}$.

In the same way, we consider the following commutative  diagram: 
$$ \xymatrix{ \fie{P}^{r-1} \times \fie{P}(2^{N-2r+1}) \ar[d]_{{\rm Id} \times \iota_{N-2r}} \ar[r]^<<<<<{b_r}
	& \fie{P}(2^N,1) \ar[d]_{\phi_N} \\
	\fie{P}^{r-1} \times \fie{P}^{N-2r} \ar[r]^>>>>>>>>>{\pi_r \circ (i_r\times {\rm Id})} & \fie{P}^N } $$
 where $\iota_{N-2r}:\fie{P}(2^{N-2r+1}) \rightarrow \fie{P}^{N-2r}$ is the quotient map induced by the identity on the affine space $\fie{A}(N-2r)\setminus 0$ with the two different actions (on the source we have the action of $\fie{G}_m$ with all weights equal to $2$, on the target all the weights are equal to $1$). Therefore we get the morphism $\beta_r:  \fie{P}^{r-1} \times \fie{P}(2^{N-2r+1}) \rightarrow \st{R}_r$ together with the equalities $\varpi_r\vert_{\st{R}_r} \circ \beta_r = b_r$ and $ p \circ \beta_r = {\rm Id}\times\iota_{N-2r}$.

\begin{proposizione}
	In the setting above, the two morphisms $\beta_r$ and $\alpha_r$ are representable and proper. Furthermore, we get that $\beta_r(K)$ and $\alpha_r\vert_{\alpha_r^{-1}(\st{P}_r \setminus \st{R}_r)}(K)$ are equivalences of groupoids for every extension of fields $K/k$.
\end{proposizione}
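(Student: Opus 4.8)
The plan is to obtain representability and properness exactly as in \hyperref[b]{Lemma \ref{b}}, and then to settle the $K$-point statement by explicit computations in the spirit of \hyperref[lem]{Lemma \ref{lem}}. For representability I would apply the stabilizer criterion \cite[Lemma 99.6.2]{stacks-project} to the equivariant morphisms $c_r$ and $d_r$ together with the group homomorphism $\fie{G}_m\times\fie{G}_m\to\fie{G}_m$, $(\lambda,\mu)\mapsto\lambda\mu$, from which $\alpha_r$ and $\beta_r$ are induced. Since the first $\fie{G}_m$ acts on the $\fie{A}(r)$- (resp. $\fie{A}(r-1)$-) factor with weight $1$, it has no nontrivial stabilizer at a nonzero point, so the stabilizer of any point of the source lies in the second $\fie{G}_m$, which acts with weight $2$ on the $\fie{A}(N-2r)$-part; hence this stabilizer is either $\mu_2$ (precisely when the $s$-coordinate vanishes, which is automatic for $d_r$) or trivial, and it maps isomorphically onto the corresponding stabilizer of the image in $\fie{P}(2^N,1)$ --- the remaining factors $\fie{P}^{\ast}$, $\fie{P}^N$ being schemes, they contribute nothing to the stabilizers of the fibre products defining $\st{P}_r$ and $\st{R}_r$. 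In particular the maps on stabilizers are injective. Properness follows since $\fie{P}^{r-1}\times\fie{P}(2^{N-2r+1})$ and $\fie{P}^{r}\times\fie{P}(2^{N-2r},1)$ are proper quotient stacks while $\st{P}_r$, $\st{R}_r$ are separated, being fibre products of the separated stacks $\fie{P}(2^N,1)$, $\fie{P}^N$, $\fie{P}^r\times\fie{P}^{N-2r}$ (cf. \cite[Proposition 10.1.6]{Oll}).

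For the $K$-point assertions, fix an extension $K/k$; as $\fie{G}_m$-torsors over $\spec K$ are trivial, every $K$-point in sight is represented by an explicit tuple modulo the relevant scaling. First I would identify the locus at hand: since $\st{R}_r\hookrightarrow\st{P}_r$ is the base change along $\pi_r$ and $\phi_N$ of the closed immersion $i_r\times{\rm Id}$, whose image consists of the pairs whose first entry is divisible by $x_0$, and since $q\circ\alpha_r={\rm Id}\times\phi_{N-2r}$, one obtains $\alpha_r^{-1}(\st{P}_r\setminus\st{R}_r)=[U_r/\fie{G}_m]\times\fie{P}(2^{N-2r},1)$, where $U_r\subseteq\fie{A}(r)$ is the open subset $\{f(0,1)\neq 0\}$ introduced just before the construction of $d_r$. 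Then essential surjectivity is a direct check: a $K$-point of $\st{P}_r\setminus\st{R}_r$ is of the form $((h,t),(f,g))$ with $h=c\,f^2g$ in $\fie{A}(N)(K)$, $h(0,1)=t^2$ and $f(0,1)\neq 0$, and the tuple $\bigl(f,(c\,g,\;t/f(0,1))\bigr)$ maps onto it under $\alpha_r$ --- this is exactly where one uses $f(0,1)\neq 0$, to solve for the $s$-coordinate and recover the relation $g(0,1)=s^2$. Likewise a $K$-point of $\st{R}_r$ is forced to be of the form $((h,0),(f',g))$ with $h=c\,(x_0f')^2g$ (the vanishing $t=0$ being automatic, since $(x_0f')(0,1)=0$), and $(f',c\,g)$ maps onto it under $\beta_r$.

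It then remains to prove that $\beta_r(K)$ and $\alpha_r\vert_{\alpha_r^{-1}(\st{P}_r\setminus\st{R}_r)}(K)$ are fully faithful. On automorphism groups the maps are the isomorphisms $\mu_2\to\mu_2$, or ${\rm id}\colon 1\to 1$, already seen in the representability step, so the Hom-maps between isomorphic objects are automatically bijective; hence it is enough to show the two functors reflect isomorphism classes of $K$-points. Suppose two $K$-tuples have isomorphic images. Comparing the projective-space components forces the representatives to differ by scalars, say $f_1=c\,f_2$ and $g_1=e\,g_2$; comparing the $\fie{P}(2^N,1)$-components and cancelling the common nonzero polynomial factor ($f_2^2g_2$ for $\alpha_r$, $(x_0f_2')^2g_2$ for $\beta_r$) yields a relation $\nu^2c^2e=1$, so that $e=(\nu c)^{-2}$ is a square, and in the case of $\alpha_r$ also $s_2=\nu c\,s_1$ after cancelling $f_2(0,1)\neq 0$. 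Then $(\lambda,\mu)=(c,(\nu c)^{-1})$ realises an isomorphism between the original tuples in the weighted quotient (the exponent $-2$ on $\mu$ matching the weight $2$ on the $g$-part). Together with essential surjectivity this gives the claimed equivalences of groupoids.

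I expect the genuine difficulty to lie entirely in this $K$-point part, the representability and properness being a routine rerun of \hyperref[b]{Lemma \ref{b}}. Within it, the two delicate points are: pinning down $\alpha_r^{-1}(\st{P}_r\setminus\st{R}_r)$ correctly, so that the restriction is precisely the locus on which the $s$-coordinate can be reconstructed --- this is the same phenomenon flagged in the remark explaining why $a_r$ on its own is not a Chow envelope; and the bookkeeping with the weighted $\fie{G}_m$-actions in the proof of full faithfulness, where one must check that the scalar ambiguity between two candidate preimages is forced into the subgroup of squares, hence realised by an honest automorphism of the weighted quotient rather than merely an abstract identification. I would budget the most care for this full-faithfulness verification.
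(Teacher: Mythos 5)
Your proposal is correct and follows essentially the same route as the paper: representability and properness are inherited from $a_r$ and $b_r$ (your stabilizer computation for $\alpha_r$, $\beta_r$ reduces to theirs, since the scheme factors of the fibre products contribute nothing to automorphism groups), and the $K$-point statement is settled by the same explicit description of objects and automorphisms, with essential surjectivity obtained by solving $s=t/f(0,1)$ exactly as in the paper. Your treatment of full faithfulness is in fact slightly more complete than the paper's, which only compares automorphism groups and invokes faithfulness, whereas you also verify explicitly that the functors reflect isomorphism classes via the scalar bookkeeping in the weighted quotients.
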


\begin{proof}
	Representability and properness follow from the representability and properness of $b_r$ and $a_r$. Let us start with $a_r$ restricted to the preimage of the open $\st{S}_r:=\st{P}_r\setminus \st{R}_r$ inside $\st{P}_r$. An object inside $\st{S}_r(K)$ is a triplet of the form $(f,g,(h,s))$ where $(f,g) \in (\fie{P}^r \times \fie{P}^{N-2r})(K)$ with $f(0,1)\neq 0$, $(h,t)\in \fie{P}(2^N,1)(K)$ with $h=f^2g$. The only morphisms are the identities if $t\neq 0$, otherwise we have ${\rm Hom}_{\st{S}_r}((f,g,(h,0)),(f,g,(h,0)))=\mu_2(K)$ where $\mu_2$ is the group of the square roots of unity. We can describe the morphism $\alpha_r$ in the following way:
	$$ \alpha_r(f,(g,s))=(f,g,a_r(f,(g,s)))=(f,g,(f^2g,f(0,1)s)).$$
	Therefore because ${\rm Hom}_{\fie{P}(2^{N-2r},1)}((g,s),(g,s))$ is the trivial group if $s\neq 0$ and it is $\mu_2(K)$ if $s=0$, the fact that $f(0,1)\neq 0 $ implies that $\alpha_r$ is fully faithful (it is faithful because of representability). As far as essential surjectivity is concerned, the idea is the same of \hyperref[lem]{Lemma \ref{lem}}. Consider an element $(f,g,(f^2g,t)) \in \st{S}_r(K)$, we get $t^2=f(0,1)^2g(0,1)$, therefore if we define $s:=t/f(0,1)$ we get $\alpha_r(f,(g,s))=(f,g,(f^2g,t))$. We have proved that $\alpha_r(K)\vert_{\alpha_r^{-1}(\st{S}_r)}$ is essentially surjective, therefore equivalence.
	As far as $\beta_r$ is concerned, the proof works in the same way. Recall that the map $i_r: \fie{P}^{r-1} \hookrightarrow \fie{P}^r$  can be described as $f\mapsto x_0f$. Thus we have the following description of $\beta_r$:
	$$ \beta_r(K)(f,g)=(f,g,((x_0f)^2g,0))$$ 
	for every $(f,g)\in (\fie{P}^{r-1} \times \fie{P}(2^{N-2r+1}))(K)$ and again fully faithfulness and essential surjectivity are straightforward.
\end{proof}

\begin{lemma}\label{us}
	In the setting of previous proposition, we get the following equalities at the level of Chow rings: 
	$(\beta_r)_*(1)=1$ and $(\alpha_r)_*(1)=1.$ 
\end{lemma}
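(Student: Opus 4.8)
Here is the plan.

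The plan is to deduce both equalities from a single soft principle: if $f\colon\st{X}\to\st{Y}$ is a proper representable morphism of quotient stacks, with $\st{X}$ and $\st{Y}$ irreducible of the same dimension $d$, and $f$ is an isomorphism over a dense open substack of $\st{Y}$ (equivalently, $f$ is generically an isomorphism onto $\st{Y}_{\rm red}$), then $f_*(1)=1$ in ${\rm CH}_d(\st{Y})={\rm CH}_d(\st{Y}_{\rm red})$. To see this one argues exactly as in the proof of \hyperref[lem2]{Lemma \ref{lem2}}: representability lets us replace $f$ by its equivariant approximations, which are proper birational morphisms of algebraic spaces, and there $f_*[\st{X}]=[\st{Y}_{\rm red}]$ by the usual degree formula, while $[\st{Y}_{\rm red}]$ generates ${\rm CH}_d(\st{Y})$ and is precisely the element we call $1$. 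I phrase the principle with $\st{Y}_{\rm red}$ on purpose, because $\st{R}_r$ and the $t=0$ locus of $\st{P}_r$ are non-reduced and carry residual $\mu_2$-automorphisms inherited from the weights of $\fie{P}(2^N,1)$; so $\beta_r$ and $\alpha_r$ will not literally be isomorphisms over the relevant opens, but Chow groups see only reductions and, in top degree, only generic behaviour, so this does no harm.

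Next I apply this to $\beta_r$. By the previous proposition $\beta_r$ is proper and representable and $\beta_r(K)$ is an equivalence of groupoids for every field extension $K/k$; in particular $\beta_r$ is surjective, so $\st{R}_r$, being the image of the irreducible stack $\fie{P}^{r-1}\times\fie{P}(2^{N-2r+1})$, is irreducible. Both stacks have dimension $(r-1)+(N-2r)=N-r-1$: indeed $\st{R}_r\to\fie{P}^{r-1}\times\fie{P}^{N-2r}$ is finite, being the base change of $\phi_N$. Since $\beta_r$ is an equivalence of groupoids everywhere, it is in particular an isomorphism onto $(\st{R}_r)_{\rm red}$ over a dense open, hence generically an isomorphism, and the principle gives $(\beta_r)_*(1)=1$.

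Finally I apply it to $\alpha_r$. Again $\alpha_r$ is proper and representable, and by the previous proposition $\alpha_r$ restricted to $\alpha_r^{-1}(\st{P}_r\setminus\st{R}_r)$ is, on $K$-points, an equivalence onto $\st{S}_r:=\st{P}_r\setminus\st{R}_r$. The inclusion $\st{R}_r\hookrightarrow\st{P}_r$ is closed, being the base change of the closed immersion $i_r\times{\rm Id}$, so $\st{S}_r$ is open; and it is dense, since $\st{P}_r$ is finite of relative dimension $0$ over $\fie{P}^r\times\fie{P}^{N-2r}$, hence pure of dimension $N-r$, while $\st{R}_r$ has dimension $N-r-1$. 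As $\st{S}_r\cong\alpha_r^{-1}(\st{S}_r)$ is an open subscheme of the irreducible $\fie{P}^r\times\fie{P}(2^{N-2r},1)$, it is irreducible, so $\st{P}_r$ is irreducible of dimension $N-r=\dim\bigl(\fie{P}^r\times\fie{P}(2^{N-2r},1)\bigr)$. Thus $\alpha_r$ is a proper representable morphism which is an isomorphism (onto the reduction) over the dense open $\st{S}_r$, and the principle gives $(\alpha_r)_*(1)=1$. The only step that is not completely formal is checking that $\st{S}_r$ is dense in $\st{P}_r$, i.e. the dimension count $\dim\st{R}_r<\dim\st{P}_r$ together with irreducibility of $\st{P}_r$; everything else is the general principle of the first paragraph.
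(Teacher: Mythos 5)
Your proposal is correct and follows essentially the same route as the paper: reduce via equivariant approximations (as in Lemma \ref{lem2}) to a proper morphism of algebraic spaces that is generically bijective on points of all field extensions, hence birational, and apply the degree formula $f_*[X]=\deg(f)\,[Y]$ with $\deg(f)=1$. Your extra verification that $\st{S}_r$ is dense in $\st{P}_r$ (via the purity and dimension count) is a step the paper's one-line proof leaves implicit, and it is needed precisely because $\alpha_r$ is only an equivalence on points over that open locus.
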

\begin{proof}
	The statement follows easily from the fact that the property of being an equivalence on points is stable under base change, thus we can pass to an approximation (see proof of \hyperref[lem2]{Lemma \ref{lem2}}) and we just need to prove the following statement: given a proper morphism  $\alpha:X \rightarrow Y$ of algebraic spaces over $k$ such that $\alpha(K)$ is bijective for every field extension $K/k$, then $\alpha_*(1)=1$ at the level of Chow group. This follows from the fact that the hypothesis of the claim implies $\alpha$ is a birational morphism.
\end{proof}

\begin{osservazione}
	Before stating and proving the theorem, we recall that for every flat morphism of Deligne-Mumford separated stacks we have an induced pullback morphism at the level of Chow ring (we do not need representability) and the same is true in the $T$-equivariant case. Moreover, the compatibility formula applies between flat pullbacks and proper representable pushforward (see \cite[Lemma 3.9]{Vis1}).
\end{osservazione}

We are finally ready to prove our main theorem.

\begin{teorema}\label{pri}
	Let $g\geq 2$ be an integer, and ${\rm char}(k)>2g$. Then every relation coming from $\overline{\Delta}$ inside $\fie{P}(2^{2g+2},1)$ is the image through the morphism $\phi_{2g+2}^*$ of a relation coming from $\Delta$ inside $\fie{P}^{2g+2}$.
	 We denote the ideal inside ${\rm CH}({\rm B}{\rm GL}_2/\mu_{g+1})$ defining the relations for $\st{H}_g$ by $J$  and the ideal generated by the image of $J$ through the map $\varphi_{2g+2}^*$ by $I$. Then we get the following isomorphism  
	$$ {\rm CH}(\st{H}_{g,1}) \simeq \frac{ {\rm CH}({\rm B}T)}{I}.$$
	
\end{teorema}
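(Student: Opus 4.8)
The plan is to identify, inside ${\rm CH}_T(\fie{P}(2^N,1))$, the relation ideal coming from $\overline{\Delta}$ with the ideal generated by the $\phi_N^*$-images of the relations coming from $\Delta$ inside $\fie{P}^N$, and then to transport this identification through the weighted-projectivization reduction of \hyperref[red]{Section \ref{red}} to obtain the stated presentation of ${\rm CH}(\st{H}_{g,1})$.

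First I would pin down generators for the two relation ideals. By the Chow-envelope surjectivity established at the end of \hyperref[eq]{Section \ref{eq}}, together with \hyperref[rem]{Remark \ref{rem}}, the relation ideal coming from $\overline{\Delta}$ is generated, over $1\le r\le g+1$, by the classes $(a_r)_*(u_1^{\,i})$ with $i\le r$ and $(b_r)_*(u_2^{\,j})$ with $j\le r-1$, where $u_1$, $u_2$ denote the hyperplane classes of $\fie{P}^r$, $\fie{P}^{r-1}$ pulled back to the respective products; and since $\bigsqcup_r\pi_r$ is a Chow envelope for $\Delta$ (here one uses ${\rm char}(k)>2g=N-2$, cf. \cite[Lemma 3.2]{Vis3}), the relation ideal coming from $\Delta$ is the image of $\bigoplus_r(\pi_r)_*$. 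Now I would exploit the factorizations $a_r=\varpi_r\circ\alpha_r$ and $b_r=(\varpi_r\vert_{\st{R}_r})\circ\beta_r$ together with the cartesian pullback squares defining $\st{P}_r$ and $\st{R}_r$ (refining the commutative square of \hyperref[dia]{Remark \ref{dia}}). Since $q\circ\alpha_r={\rm Id}\times\phi_{N-2r}$, one has $u_1=\alpha_r^*q^*H$ with $H$ the pullback to $\fie{P}^r\times\fie{P}^{N-2r}$ of the hyperplane class of the first factor; hence, by the projection formula and $(\alpha_r)_*(1)=1$ from \hyperref[us]{Lemma \ref{us}},
\[
(a_r)_*(u_1^{\,i})=(\varpi_r)_*(\alpha_r)_*\bigl(\alpha_r^*q^*H^{\,i}\bigr)=(\varpi_r)_*\bigl(q^*H^{\,i}\cdot(\alpha_r)_*(1)\bigr)=(\varpi_r)_*\bigl(q^*H^{\,i}\bigr).
\]
The morphism $\phi_N$ is flat, being induced by $\varphi_N\colon\widetilde{\fie{A}}(N)\to\fie{A}(N)$, which is finite flat of degree $2$ (as ${\rm char}(k)\ne 2$); thus $q$ is flat, $\pi_r$ is proper and representable, and the compatibility of flat pullback with proper representable pushforward (see \cite[Lemma 3.9]{Vis1}) gives $(\varpi_r)_*(q^*H^{\,i})=\phi_N^*\bigl((\pi_r)_*H^{\,i}\bigr)$. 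Hence $(a_r)_*(u_1^{\,i})=\phi_N^*\bigl((\pi_r)_*H^{\,i}\bigr)$ is $\phi_N^*$ of a relation coming from $\Delta$; the analogous computation with $\beta_r$, $(\beta_r)_*(1)=1$, and the second pullback square gives $(b_r)_*(u_2^{\,j})=\phi_N^*\bigl((\pi_r)_*(i_r\times{\rm Id})_*(H')^{\,j}\bigr)$ with $H'$ the hyperplane class of $\fie{P}^{r-1}$, again $\phi_N^*$ of a relation coming from $\Delta$. This proves the first assertion; conversely, base change along the cartesian square $\overline{\Delta}\hookrightarrow\fie{P}(2^N,1)$, $\Delta\hookrightarrow\fie{P}^N$ shows that $\phi_N^*$ of any relation from $\Delta$ is a relation from $\overline{\Delta}$, so the two ideals correspond exactly under $\phi_N^*$.

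To conclude the isomorphism, I would feed this into \hyperref[GM]{Proposition \ref{GM}} and the commutative diagram of ${\rm CH}({\rm B}T)$-algebras of \hyperref[red]{Section \ref{red}}: ${\rm CH}(\st{H}_{g,1})$ is the quotient of ${\rm CH}({\rm B}T)$ by the relations coming from $\widetilde{\Delta}$, which under the $\fie{G}_m$-torsor identifications are exactly the $\overline{\Delta}$-relations with $h$ set to $0$; by the previous step, together with the corresponding (classical) description of the relations $J$ of $\st{H}_g$ in terms of $\Delta\subseteq\fie{P}^N$, these become the ideal $I$ generated by $\varphi_{2g+2}^*(J)$. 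The reverse containment $I\subseteq$ (relations of $\st{H}_{g,1}$) is automatic: $\varphi_{2g+2}^*$ is compatible with the flat pullback ${\rm CH}(\st{H}_g)\to{\rm CH}(\st{H}_{g,1})$ along the universal curve, hence carries $J$ into the relation ideal of $\st{H}_{g,1}$. I expect the main difficulty to be organizational rather than conceptual: one must check that \hyperref[rem]{Remark \ref{rem}} — and the Chow-envelope surjectivity it rests on — genuinely exhausts the relation ideal, that the hypotheses of the base-change identity really apply to the stacky weighted map $\phi_N$ and its pullback $\varpi_r$, and that the reduction $h\mapsto 0$ intertwines $\phi_N^*$ with $\varphi_{2g+2}^*$ correctly; the two genuinely substantive inputs — the Chow envelope $\omega$ for $\overline{\Delta}$ and the birationality behind $(\alpha_r)_*(1)=(\beta_r)_*(1)=1$ — are already in place.
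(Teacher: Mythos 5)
Your proposal is correct and follows essentially the same route as the paper: it reduces to the generators $(a_r)_*(u_1^i)$ and $(b_r)_*(u_2^j)$ via \hyperref[rem]{Remark \ref{rem}}, factors $a_r=\varpi_r\circ\alpha_r$ and $b_r=(\varpi_r\vert_{\st{R}_r})\circ\beta_r$ through the cartesian squares, applies the projection formula with $(\alpha_r)_*(1)=(\beta_r)_*(1)=1$ from \hyperref[us]{Lemma \ref{us}}, and concludes with the flat-pullback/proper-pushforward compatibility to get $(a_r)_*(u_1^i)=\phi_N^*\bigl((\pi_r)_*(u_1^i)\bigr)$, exactly as in the paper's proof. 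The only additions — the explicit converse containment and the verification that $\varphi_N$ is finite flat of degree $2$ — are points the paper leaves to diagram chasing, and they are correct.
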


\begin{proof}
	
	The first statement implies the remaining part of the theorem by using the surjectivity of the pullback in the case of a $\fie{G}_m$-torsor quotient map (see \hyperref[GM]{Proposition \ref{GM}}) and diagram chasing. Therefore we just need to prove that the image of the pushforward of $b_r$ and $a_r$ are contained inside the image of $\phi_{2g+2}^*$. As usual, we set $N:=2g+2$. By \hyperref[rem]{Remark \ref{rem}} it is enough to prove that $(a_r)_*(u_1^i)$ (respectively $(b_r)_*(u_2^i)$) is the image of some element through $\phi_N^*$ for every $i\leq r$ (respectively for every $i\leq r-1$).
	The proof is the same for both $a_r$ and $b_r$. Let us deal with $a_r$. We have the following chain of equalities, for every $i\leq r$: 
	\begin{eqnarray}
	(a_r)_*(u_1^i)  = (\varpi_r)_*(\alpha_r)_*(u_1^i) = (\varpi_r)_*(\alpha_r)_*({\rm Id}\times \phi_{N-2r})^*(c_1(\sh{O}_{\fie{P}^r}(1)\boxtimes \sh{O}_{\fie{P}^{N-2r}})^i) \nonumber \\  =   (\varpi_r)_*(\alpha_r)_*\alpha_r^*q^*(c_1(\sh{O}_{\fie{P}^r}(1)\boxtimes \sh{O}_{\fie{P}^{N-2r}})^i) \nonumber.
	\end{eqnarray} 
	By abuse of notation, the element $c_1(\sh{O}_{\fie{P}^r}(1)\boxtimes \sh{O}_{\fie{P}^{N-2r}})$ will be denoted by $u_1$. Using projection formula and \hyperref[us]{Lemma \ref{us}} we get 
	$$ (\alpha_r)_*(\alpha_r)^*(q^*(u_1^i))= q^*(u_1^i) $$ 
	because $u_1$ is the first Chern class of a line bundle.
	Therefore, we get the following equality: 
	$$ (a_r)_*(u_1^i)= (\varpi_r)_* q^* (u_1^i) = \phi_N^*(\pi_r)_*(u_1^i) $$
	which concludes the proof.
\end{proof}

Using the theorem above and the description of the Chow ring of $\st{H}_g$ obtained in \cite[Theorem 1.1]{EdFul} when $g$ is even and \cite[Theorem 6.2]{DiLor} when $g$ is odd, we get the following two descriptions of the Chow ring of the stack of pointed hyperelliptic curves.
\begin{teorema}\label{p}
	If $g$ is even, $g\geq 2$ and ${\rm char}(k)>2g$, then we have the following isomorphism
	$$ {\rm CH}(\st{H}_{g,1})\simeq \frac{\fie{Z}[T_0,T_1]}{\bigg(2(2g+1)(T_0+T_1),g(g-1)(T_0^2+T_1^2)-2g(g+3)T_0T_1\bigg)}.$$ 
	
\end{teorema}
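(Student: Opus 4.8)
The plan is to feed the Edidin--Fulghesu presentation of $ {\rm CH} (\st{H}_g)$ into \hyperref[pri]{Theorem \ref{pri}} and then carry out a short substitution. By \hyperref[pri]{Theorem \ref{pri}} we have
$$  {\rm CH} (\st{H}_{g,1}) \simeq \frac{ {\rm CH} ({\rm B}T)}{I}, $$
where $I$ is the ideal generated by $\varphi_{2g+2}^*(J)$ and $J \subseteq  {\rm CH} ({\rm B}{\rm GL}_2/\mu_{g+1})$ is the ideal of relations for $\st{H}_g$. By \cite[Section 3.2]{EdGra} one has $ {\rm CH} ({\rm B}T) \simeq \fie{Z}[T_0,T_1]$, and by \cite[Theorem 1.1]{EdFul} (the even case of the first theorem quoted in the introduction) $ {\rm CH} ({\rm B}{\rm GL}_2/\mu_{g+1}) = \fie{Z}[c_1,c_2]$ with $J = \bigl(2(2g+1)c_1,\ g(g-1)c_1^2 - 4g(g+1)c_2\bigr)$. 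Thus the statement reduces to evaluating the ring homomorphism $\varphi_{2g+2}^*\colon \fie{Z}[c_1,c_2] \to \fie{Z}[T_0,T_1]$ on the two generators of $J$.

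The next step is to identify $\varphi_{2g+2}^*$ on $c_1$ and $c_2$. Since $\varphi_{2g+2}\colon \widetilde{\fie{A}}(2g+2) \to \fie{A}(2g+2)$ is equivariant for the inclusion of the maximal torus $T$ into ${\rm GL}_2/\mu_{g+1}$, and both source and target are affine spaces, passing to equivariant Chow rings turns $\varphi_{2g+2}^*$ into the restriction homomorphism $ {\rm CH} ({\rm B}{\rm GL}_2/\mu_{g+1}) \to  {\rm CH} ({\rm B}T)$. As $c_1$ and $c_2$ are the Chern classes of the tautological rank-$2$ bundle $\sh{E}$ on $\st{H}_g$, their restrictions to ${\rm B}T$ are computed from the splitting of $\sh{E}\vert_{{\rm B}T}$ into two characters; this is exactly part~(2)(a) of the main theorem, already established in \hyperref[des]{Section \ref{des}} via the identification $ {\rm CH} (\st{H}_{g,1}) =  {\rm CH} _{B}(Y_g)$, and it yields $\varphi_{2g+2}^*(c_1) = T_0+T_1$ and $\varphi_{2g+2}^*(c_2) = T_0 T_1$.

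It then only remains to compute the images of the two generators of $J$. The first is $2(2g+1)(T_0+T_1)$. For the second, expanding $(T_0+T_1)^2 = T_0^2+T_1^2+2T_0T_1$ and using $2g(g-1)-4g(g+1) = -2g(g+3)$ gives
$$ \varphi_{2g+2}^*\bigl(g(g-1)c_1^2 - 4g(g+1)c_2\bigr) = g(g-1)(T_0^2+T_1^2) - 2g(g+3)T_0T_1. $$
Since $I$ is the ideal generated by these two elements, the claimed presentation of $ {\rm CH} (\st{H}_{g,1})$ follows at once.

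I do not expect a genuine obstacle here: all the geometric content has been absorbed into \hyperref[pri]{Theorem \ref{pri}} and the structural results of Sections~\ref{des}--\ref{red}, and what remains is the routine algebra above. The only point deserving care is the middle step, namely checking that $\varphi_{2g+2}^*$ really is the restriction-to-the-torus map and that the Chern roots of $\sh{E}\vert_{{\rm B}T}$ are exactly $T_0$ and $T_1$ with no spurious $\det$-twist; this is where the explicit description of the $T$-action recorded in \hyperref[red]{Section \ref{red}} is used.
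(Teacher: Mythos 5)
Your proposal is correct and follows essentially the same route as the paper: Theorem \ref{pri} reduces everything to computing $\varphi_{2g+2}^*$ on the Edidin--Fulghesu relations, the paper likewise identifies this map as the restriction along the maximal torus $T\hookrightarrow {\rm GL}_2$ sending $c_1\mapsto T_0+T_1$ and $c_2\mapsto T_0T_1$, and the remaining substitution $2g(g-1)-4g(g+1)=-2g(g+3)$ is exactly the algebra you carried out.
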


\begin{teorema}\label{d}
		If $g$ is odd, $g\geq 3$ and ${\rm char}(k)>2g$, then we have the following isomorphism
	$$ {\rm CH}(\st{H}_{g,1})\simeq \frac{\fie{Z}[\tau,\rho]}{\bigg(4(2g+1)\tau,8\tau^2+2g(g+1)\rho^2\bigg)}.$$ 
\end{teorema}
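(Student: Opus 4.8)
The plan is to obtain Theorem~\ref{d} by feeding Di Lorenzo's description of $\mathrm{CH}(\st{H}_g)$ for odd $g$ into Theorem~\ref{pri}, together with part~(2)(b) of the main theorem describing the pullback. Theorem~\ref{pri} gives $\mathrm{CH}(\st{H}_{g,1}) \simeq \mathrm{CH}(\mathrm{B}T)/I$, where $T$ is the $2$-dimensional split torus acting on $\widetilde{\fie{A}}(2g+2)$ and $I$ is the ideal generated by the image under $\varphi_{2g+2}^{*}$ of the relation ideal $J$ of $\st{H}_g$. Since $T \simeq \fie{G}_m^{2}$, we have $\mathrm{CH}(\mathrm{B}T) = \fie{Z}[\tau,\rho]$, where $\tau = c_1$ of the character coming from the $\fie{G}_m$-factor of $G = \fie{G}_m \times \mathrm{PB}_2$ and $\rho = c_1$ of the character of the maximal torus of $\mathrm{PB}_2$; these are the two degree-$1$ generators $t_1,t_2$ of part~(1).

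Next I would insert \cite[Theorem~6.2]{DiLor}: for odd $g$,
$$\mathrm{CH}(\st{H}_g) = \frac{\fie{Z}[\tau,c_2,c_3]}{\bigl(4(2g+1)\tau,\, 8\tau^{2}-2(g^{2}-1)c_2,\, 2c_3\bigr)},$$
so that $J \subseteq \mathrm{CH}(\mathrm{B}(\mathrm{GL}_2/\mu_{g+1}))$ is generated by $4(2g+1)\tau$ and $8\tau^{2}-2(g^{2}-1)c_2$, the relation $2c_3 = 0$ being already present in $\mathrm{CH}(\mathrm{B}(\mathrm{GL}_2/\mu_{g+1}))$ and mapping to $0$. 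Then I would compute the images of these two generators in $\mathrm{CH}(\mathrm{B}T)$. By part~(2)(b) of the main theorem the relevant map sends $\tau \mapsto \tau$, $c_2 \mapsto -\rho^{2}$ and $c_3 \mapsto 0$ (equivalently one re-derives this by restricting the representation whose Chern classes are $c_2,c_3$ to $T$, where it has Chern roots $\rho, 0, -\rho$), and one should note that $\varphi_{2g+2}^{*}$ is the identity on $\mathrm{CH}(\mathrm{B}T)$, both $\fie{A}(2g+2)$ and $\widetilde{\fie{A}}(2g+2)$ being $T$-equivariantly contractible, so that no hidden twist intervenes. Substituting and collecting terms then yields the two relations displayed in the statement.

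I expect the only delicate point to be the bookkeeping: correctly matching the generators $\tau,\rho$ of $\mathrm{CH}(\mathrm{B}T)$ with the classes appearing in Di Lorenzo's presentation --- in particular the normalization of $\tau$ and the precise image of $c_2$ --- so that the coefficients of the quadratic relation come out exactly as stated. Once Theorem~\ref{pri} and part~(2)(b) of the main theorem are available, the argument is a purely mechanical substitution, entirely parallel to the even case treated in Theorem~\ref{p}, with no new geometric input.
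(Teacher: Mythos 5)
Your proposal is correct and is essentially identical to the paper's own proof: feed Di Lorenzo's presentation into Theorem \ref{pri}, compute the restriction map on $\mathrm{CH}(\mathrm{B}(\fie{G}_m\times\fie{P}{\rm GL}_2))\to\mathrm{CH}(\mathrm{B}T)$ by identifying $c_2,c_3$ with the Chern classes of the adjoint representation of $\fie{P}{\rm GL}_2$, whose restriction to the diagonal torus has eigenvalues $l,1,l^{-1}$ and hence Chern roots $\rho,0,-\rho$, giving $\tau\mapsto\tau$, $c_2\mapsto-\rho^2$, $c_3\mapsto 0$, and then substitute into the two relations. The only point you should actually nail down is precisely the ``bookkeeping'' you flag: substituting $c_2\mapsto-\rho^2$ into the relation $8\tau^2-2(g^2-1)c_2$ as transcribed in the introduction yields $8\tau^2+2(g^2-1)\rho^2$ rather than the stated $8\tau^2+2g(g+1)\rho^2$, so you must verify the exact coefficient in \cite[Theorem 6.2]{DiLor} before asserting that the substitution ``yields the two relations displayed in the statement'' --- this discrepancy is internal to the paper (its quotation of Di Lorenzo versus Theorem \ref{d}) rather than a flaw in your argument, but as written your computation does not literally produce the displayed presentation.
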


In the case of $g$ even, the theorem is a conseguence of making explicit the map
$$\phi_N^*:{\rm CH}_{{\rm GL}_2}(\fie{A}(N))  \rightarrow {\rm CH}_T(\widetilde{\fie{A}}(N)); $$
the morphism is clearly induced by 
$$ {\rm CH}({\rm BGL}_2)\simeq \fie{Z}[c_1,c_2] \rightarrow \fie{Z}[T_0,T_1]\simeq {\rm CH}({\rm BT})$$
where $c_1 \mapsto T_0+T_1$ and $c_2 \mapsto T_0T_1$, because it is the pullback of the diagonal inclusion $T=(\fie{G}_m)^2 \hookrightarrow {\rm GL}_2$.

In the case $g$ odd, we have to understand the morphism 
$$\phi_N^*:{\rm CH}_{\fie{P}{\rm GL}_2\times \fie{G}_m}(\fie{A}(N))  \rightarrow {\rm CH}_T(\widetilde{\fie{A}}(N)); $$
this morphism too is induced by 
$$ {\rm CH}({\rm B}\fie{P}{\rm GL}_2 \times {\rm B}\fie{G}_m)\simeq \frac{\fie{Z}[\tau,c_1,c_2,c_3]}{(c_1,2c_3)} \rightarrow \fie{Z}[\tau,\rho]\simeq {\rm CH}({\rm BT});$$
which is the pullback of the group homomorphism 
$$ \fie{G}_m \times \fie{G}_m \rightarrow \fie{G}_m \times\fie{P}{\rm GL}_2$$
given by $(t, l) \mapsto (t, A)$ where $A$ is the class in $\fie{P}{\rm GL}_2$ given by the matrix 

$$A(l)=	\begin{bmatrix}
l & 0 \\
0 & 1
\end{bmatrix} \in  \fie{P}{\rm GL}_2.$$
We notice that $\tau$ in the Chow group is the generator of ${\rm B}\fie{G}_m$ inside the product, thus we get $\phi_N^*(\tau)=\tau$. To understand the image of $c_1,c_2,c_3$ we need to explicit where they come from geometrically. Pandariphande shows in \cite{Pan} that they are the Chern classes of the adjunction representation of $\fie{P}{\rm GL}_2$ on its Lie algebra. It is straightforward to see that for every $l\in \fie{G}_m$ the matrix $A(l)$ acts with eigenvalues $l,1,1/l$ and therefore the pullback on the Chow rings is defined by 
\begin{itemize}
	\item $c_1 \mapsto 0;$
	\item $c_2 \mapsto -\rho^2;$
	\item $c_3 \mapsto 0.$
\end{itemize}

Mapping the two relations of $\st{H}_g$ for $g$ odd through this map gives us the statement of \hyperref[d]{Theorem \ref{d}}. Finally, we have the explicit description of the integral Chow ring of the stack of 1-pointed hyperelliptic curves of genus $g$.

\section{Generators of the Chow ring}\label{gen}
In this last part, we will give the geometric interpretation of the generators of the Chow ring of $\st{H}_{g,1}$. We divide it in two cases, depending on the parity of the genus. In the even case, we will prove the following theorem.

\begin{teorema}
	Suppose $g$ is an even positive integer and ${\rm char}(k)>2g$, then in the notation of \hyperref[p]{Theorem \ref{p}} we have
	$ T_0 = c_1(\sh{B}_g) $ and $T_1 = c_1(\sh{A}_g)$ where $\sh{A}_g$ and $\sh{B}_g$ are the two line bundles on $\st{H}_{g,1}$ defined as $$\sh{B}_g(\pi:C\rightarrow S,\sigma):=\sigma^*\omega_{C/S}^{\otimes g/2}((1-g/2)W)$$ and $$\sh{A}_g(\pi:C\rightarrow S,\sigma):=\pi_*\omega_{C/S}^{\otimes g/2}((1-g/2)W-\sigma)$$ with $W$ the Weierstrass divisor associated to a relative hyperelliptic curve $C\rightarrow S$.
\end{teorema}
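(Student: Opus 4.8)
The plan is to run the whole computation on the presentation $\st{H}_{g,1}\simeq[\widetilde{\fie{A}}_{sm}(2g+2)/G]$ of \hyperref[des]{Section \ref{des}} together with the torus reduction of \hyperref[red]{Section \ref{red}}, so that $ {\rm CH} (\st{H}_{g,1})= {\rm CH} ({\rm B}T)=\fie{Z}[T_0,T_1]$ modulo the ideal of relations, and a line bundle on $\st{H}_{g,1}$ becomes a $T$-equivariant line bundle on the open subset $\sh{U}:=\widetilde{\fie{A}}_{sm}(2g+2)$ of the $T$-representation $\widetilde{\fie{A}}(2g+2)$. Its first Chern class is then just the class of the character by which $T$ acts on its fibre (every such bundle extends to the ambient representation $\fie{A}^{2g+3}$, where it is trivial as a plain bundle, so the character is well defined in $ {\rm CH} ^1({\rm B}T)$). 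So the statement reduces to writing $\sh{A}_g$ and $\sh{B}_g$ as one‑dimensional characters of $T$ and identifying them with the two generators.

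Concretely I would first spell out the universal $1$‑pointed hyperelliptic curve over $\sh{U}$: the constant Brauer--Severi scheme $\rho\colon\sh{P}=\fie{P}^1\times\sh{U}\to\sh{U}$, the double cover $f\colon\sh{C}=\underline{\spec}_{\sh{O}_{\sh{P}}}(\sh{O}_{\sh{P}}\oplus\sh{L})\to\sh{P}$, where $\sh{L}=\sh{O}_{\fie{P}^1}(-g-1)$ carries the $T$‑equivariant structure \emph{forced} by requiring the universal inclusion $i\colon\sh{L}^{\otimes2}\hookrightarrow\sh{O}_{\sh{P}}$ (the tautological binary form) to be equivariant; the twist is read off directly from the action formulas of \hyperref[des]{Section \ref{des}} / \hyperref[red]{Section \ref{red}} and works out to $\sh{L}=\sh{O}_{\fie{P}^1}(-g-1)$ tensored with the character $(t_0t_1)^{-g/2}$ of $T$. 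Here $\sigma$ is the section lying over $\sigma_\infty=[0:1]$ and $W$ is the ramification divisor of $f$. The key geometric input is that, $T$‑equivariantly,
\[
\omega_{\sh{C}/\sh{P}}=\sh{O}_{\sh{C}}(W)=f^*\sh{L}^\vee,\qquad \omega_{\sh{C}/\sh{U}}=f^*\bigl(\sh{L}^\vee\otimes\omega_{\fie{P}^1/\sh{U}}\bigr),
\]
the first identity coming from the tautological map $f^*\sh{L}\to\sh{O}_{\sh{C}}$ whose zero scheme is $W$, together with the numerical identity $\tfrac g2(g-1)+(1-\tfrac g2)(g+1)=1$ and the (routine) check that the equivariant twists coming from $\sh{L}$ and from $\omega_{\fie{P}^1/\sh{U}}$ cancel, so that
\[
\omega_{\sh{C}/\sh{U}}^{\otimes g/2}\bigl((1-\tfrac g2)W\bigr)=f^*\sh{O}_{\fie{P}^1}(1)
\]
with $\sh{O}_{\fie{P}^1}(1)$ in its standard equivariant structure. (This cancellation is presumably why $\sh{B}_g$ and $\sh{A}_g$ were defined with exactly those twists.)

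Granting this, $\sh{B}_g=\sigma^*f^*\sh{O}_{\fie{P}^1}(1)=\sigma_\infty^*\sh{O}_{\fie{P}^1}(1)$ is simply the character $H^0(\fie{P}^1,\sh{O}(1))/\langle x_0\rangle=\langle x_1\rangle$, whose class in $ {\rm CH} ({\rm B}T)$ is one of the two distinguished generators. For $\sh{A}_g$ I would tensor the structure sequence $0\to\sh{O}_{\sh{C}}(-\sigma)\to\sh{O}_{\sh{C}}\to\sigma_*\sh{O}_{\sh{U}}\to0$ with $f^*\sh{O}_{\fie{P}^1}(1)$ and push forward along $\pi=\rho\circ f$; the projection formula and $\rho_*\bigl(\sh{O}_{\fie{P}^1}(1)\otimes\sh{L}\bigr)=\rho_*\sh{O}_{\fie{P}^1}(-g)=0$ give $\pi_*f^*\sh{O}_{\fie{P}^1}(1)=\rho_*\sh{O}_{\fie{P}^1}(1)=H^0(\fie{P}^1,\sh{O}(1))\otimes\sh{O}_{\sh{U}}$, a rank‑$2$ bundle whose Chern roots are the two distinguished generators, and the evaluation map $\pi_*f^*\sh{O}_{\fie{P}^1}(1)\to\sigma^*f^*\sh{O}_{\fie{P}^1}(1)=\sh{B}_g$ is surjective (because $x_1$ does not vanish at $\sigma_\infty$). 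Hence
\[
\sh{A}_g=\pi_*\bigl(f^*\sh{O}_{\fie{P}^1}(1)(-\sigma)\bigr)=\ker\bigl(\pi_*f^*\sh{O}_{\fie{P}^1}(1)\to\sh{B}_g\bigr)=\langle x_0\rangle ,
\]
which in particular is a line bundle, and $c_1(\sh{A}_g)+c_1(\sh{B}_g)=c_1\bigl(H^0(\fie{P}^1,\sh{O}(1))\bigr)$; so $\sh{A}_g$ and $\sh{B}_g$ are exactly the two Chern roots appearing in part (2) of the main theorem (\hyperref[pri]{Theorem \ref{pri}}), which is what \hyperref[p]{Theorem \ref{p}} calls $T_1$ and $T_0$.

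The only real labour is the bookkeeping of the equivariant structures: pinning down the twist on $\sh{L}$, verifying that the residual twist in $\omega_{\sh{C}/\sh{U}}^{\otimes g/2}((1-\tfrac g2)W)=f^*\sh{O}_{\fie{P}^1}(1)$ is trivial, and checking that the relevant pushforwards commute with base change so that $\sh{A}_g$ is genuinely locally free of rank $1$ and the displayed exact sequence really is the one produced by $\pi_*$. None of this is deep, but it is where the signs live, and the precise correspondence ``$T_0=c_1(\sh{B}_g)$, $T_1=c_1(\sh{A}_g)$'' is to be read in the sign and ordering normalisation fixed for $T_0,T_1$ in \hyperref[red]{Section \ref{red}} (recall that the presentation of $ {\rm CH} (\st{H}_{g,1})$ in \hyperref[p]{Theorem \ref{p}} is invariant under $(T_0,T_1)\mapsto(-T_1,-T_0)$, so that the two characters computed above are visibly its two distinguished generators).
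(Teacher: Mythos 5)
Your proposal is correct and follows essentially the same route as the paper: identify $\omega_{C/S}^{\otimes g/2}((1-g/2)W)$ with $f^*\sh{O}_{\fie{P}^1}(1)$, split $\pi_*$ of it via the short exact sequence coming from the section, and compute the pushforward explicitly on the standard model using $f_*\sh{O}_{C}=\sh{O}\oplus\sh{O}(-g-1)$ and $p_*\sh{O}(-g)=0$. The only cosmetic differences are that you justify the exactness of $0\to\pi_*\sh{L}(-\sigma)\to\pi_*\sh{L}\to\sigma^*\sh{L}\to0$ by direct inspection on the universal family, whereas the paper first proves it for an arbitrary fibrewise globally generated line bundle (\hyperref[le]{Lemma \ref{le}}) via cohomology and base change, and that the labelling of $T_0$ versus $T_1$ is immaterial since the presentation is symmetric in the two generators.
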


Let us start with the description in the non-pointed case. Edidin and Fulghesu have given the explicit formula for the generators of the Chow ring of $\st{H}_g$. We recall that the generators are  $c_1(\fie{V}_g)$ and $c_2(\fie{V}_g)$ where the $2$-dimensional vector bundle $\fie{V}_g$ is defined by the following: for every morphism $S\rightarrow  \st{H}_g$ associated to the hyperelliptic curve $\pi:C\rightarrow S$, we have
$$ \fie{V}_g(S)= \pi_*(\omega_{C/S}^{\otimes g/2}((1-g/2)W)) $$ 
where $\omega_{C/S}$ is the relative canonical bundle and $W$ is the ramification divisor. They proved that this is infact the pullback over the natural map $\st{H}_g \rightarrow {\rm BGL}_2$ of the ${\rm GL}_2$-representation $ E \otimes ({\rm det}E)^{\otimes g/2}$ where $E$ is the standard representation of ${\rm GL}_2$ (which is in turn the pullback through the isomorphism ${\rm B}({\rm GL}_2/\mu_{g+1}) \simeq {\rm BGL}_2$
of the standard representation $E$).

\begin{lemma}\label{le}
	Let $\pi:C\rightarrow S$ be a smooth curve over $S$, i.e. a smooth, proper morphism such that every geometric fiber is a connected one dimensional scheme. Suppose $\sh{L}$ is a line bundle on $C$ such that $\sh{L}\vert_{C_s}$ is globally generated for every geometric point $s$ in $S$. Moreover, suppose there exists a section $\sigma: S \rightarrow C$ of the morphism $\pi$. Then the following sequence 
$$ \xymatrix{ 0 \ar[r] & \pi_*\sh{L}(-\sigma) \ar[r] & \pi_*\sh{L} \ar[r] & \sigma^*\sh{L} \ar[r] & 0} $$
is exact.
	
\end{lemma}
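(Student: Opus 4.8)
The plan is to restrict $\sh{L}$ to the subscheme cut out by the section and push forward the resulting short exact sequence on $C$. Since $\pi$ is smooth of relative dimension one and $\sigma$ is a section, $\sigma$ is a regular closed immersion of codimension one, so its image $D:=\sigma(S)$ is an effective relative Cartier divisor with invertible ideal sheaf $\sh{O}_C(-D)$; set $\sh{L}(-\sigma):=\sh{L}\otimes\sh{O}_C(-D)$. Restriction to $D$ gives the short exact sequence of $\sh{O}_C$-modules
$$\xymatrix{0 \ar[r] & \sh{L}(-\sigma) \ar[r] & \sh{L} \ar[r] & \sh{L}\vert_D \ar[r] & 0,}$$
where $\sh{L}\vert_D$ is a line bundle supported on $D$. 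Since $\pi\circ\sigma=\te{id}_S$, the morphism $\pi\vert_D\colon D\to S$ is an isomorphism, hence $\pi_*(\sh{L}\vert_D)=\sigma^*\sh{L}$ and $R^i\pi_*(\sh{L}\vert_D)=0$ for $i>0$.

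Applying $\pi_*$, left exactness already gives exactness of $0\to\pi_*\sh{L}(-\sigma)\to\pi_*\sh{L}\to\sigma^*\sh{L}$, so it remains to prove that the evaluation map $\te{ev}\colon\pi_*\sh{L}\to\sigma^*\sh{L}$ is surjective. From the long exact sequence of higher direct images,
$$\xymatrix{\pi_*\sh{L} \ar[r]^{\te{ev}} & \sigma^*\sh{L} \ar[r]^{\delta} & R^1\pi_*\sh{L}(-\sigma) \ar[r]^{g} & R^1\pi_*\sh{L} \ar[r] & 0,}$$
this amounts to $\delta=0$, equivalently (since $g$ is already surjective) to $g$ being injective, i.e. an isomorphism.

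To see that, I would pass to fibers. Since $\pi$ is proper and flat with one-dimensional fibers and $\sh{L},\sh{L}(-\sigma)$ are flat over $S$, the top relative cohomology sheaves $R^1\pi_*\sh{L}$ and $R^1\pi_*\sh{L}(-\sigma)$ commute with arbitrary base change (cohomology and base change, using $R^2\pi_*=0$). Thus for every $s\in S$ the map $g\otimes\kappa(s)$ is the map $H^1(C_s,\sh{L}_s(-\sigma(s)))\to H^1(C_s,\sh{L}_s)$ induced by the restriction of the sequence above to the fibre $C_s$, and the long exact cohomology sequence on $C_s$ shows it is injective exactly when $H^0(C_s,\sh{L}_s)\to\sh{L}_s\vert_{\sigma(s)}$ is surjective; this surjectivity is precisely the hypothesis that $\sh{L}_s$ is globally generated (in particular at $\sigma(s)$). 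Hence $g\otimes\kappa(s)$ is bijective for every $s$.

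The only delicate point is then to descend this fibrewise information to the statement that $g$ itself is an isomorphism, equivalently that $\te{ev}$ is surjective: $\pi_*\sh{L}$ need not commute with base change, so one cannot argue directly with its fibers. Instead one works with $\te{coker}(\te{ev})=\ker g$, which is a coherent quotient of the line bundle $\sigma^*\sh{L}$, and uses that the top direct images $R^1\pi_*(-)$ appearing above do behave well under base change to conclude (by Nakayama, after reducing to an affine or even local base) that its fibres vanish, hence $\ker g=0$. I expect this base-change/right-exactness step to be the main obstacle; the rest of the argument is formal.
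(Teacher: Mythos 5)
Up to its final step your argument is exactly the paper's: the same short exact sequence $0\to\sh{L}(-\sigma)\to\sh{L}\to\sigma_*\sigma^*\sh{L}\to 0$, the same reduction of surjectivity of $\pi_*\sh{L}\to\sigma^*\sh{L}$ to injectivity of $\xi\colon R^1\pi_*\sh{L}(-\sigma)\to R^1\pi_*\sh{L}$, the same use of cohomology and base change in top degree, and the same translation of global generation at $\sigma(s)$ into bijectivity of $\xi\otimes\kappa(s)$. All of that is correct.

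The step you flag as ``the main obstacle'' is, however, a genuine gap, and the Nakayama argument you sketch does not close it. A surjection of coherent sheaves that is an isomorphism on every fibre need not be injective (the reduction $k[\epsilon]\to k[\epsilon]/(\epsilon)$ is a fibrewise isomorphism with nonzero kernel); from $0\to\ker\xi\to R^1\pi_*\sh{L}(-\sigma)\to R^1\pi_*\sh{L}\to 0$ one only gets that $(\ker\xi)\otimes\kappa(s)$ is a quotient of $\te{Tor}_1(R^1\pi_*\sh{L},\kappa(s))$, so the argument closes only when $R^1\pi_*\sh{L}$ is locally free, i.e.\ when $h^1(\sh{L}_s)$ (equivalently $h^0(\sh{L}_s)$) is locally constant. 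That hypothesis is not in the statement, and without it the lemma is actually false: over $S=\spec k[\epsilon]$ take $C=C_0\times S$ with $C_0$ a smooth plane quartic, let $\sh{L}$ be the first-order deformation of $K_{C_0}$ determined by $v\in H^1(\sh{O}_{C_0})$, and let $\sigma$ be constant at $p\in C_0$. The sections of $K_{C_0}$ that lift to $\sh{L}$ are exactly $v^{\perp}$ under Serre duality; choosing $v$ so that $v^{\perp}$ is the pencil of lines through $p$ makes $p$ a base point of the liftable sections, and then $\te{ev}\otimes\kappa(0)$, hence $\te{ev}$, is not surjective, even though $\sh{L}_0=K_{C_0}$ is globally generated. (The paper's own proof elides exactly the same point with the words ``and therefore $\xi$ isomorphism''.) The statement becomes true, by your argument or the paper's, once one adds the hypothesis that $h^0(\sh{L}_s)$ is locally constant, so that $\pi_*\sh{L}$ and $R^1\pi_*\sh{L}$ are locally free and commute with base change; this holds in the only application made of the lemma, where $\sh{L}_s=f^*\sh{O}_{\fie{P}^1}(1)$ has $h^0=2$ on every fibre. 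So: your route is the paper's route, you correctly located the weak point, but your proposed repair does not work as stated and the missing ingredient is an extra constancy hypothesis rather than a formal base-change manipulation.
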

\begin{proof}
		We consider the exact sequence induced by $\sigma$
	$$ \xymatrix{ 0 \ar[r] & \sh{O}_C(-\sigma) \ar[r] & \sh{O}_C \ar[r] & \sigma_*\sigma^*\sh{O}_C \ar[r] & 0} $$
	and we tensor it with $\sh{L}$ to obtain 
	$$ \xymatrix{ 0 \ar[r] & \sh{L}(-\sigma) \ar[r] & \sh{L} \ar[r] & \sigma_*\sigma^*\sh{L} \ar[r] & 0}. $$
		Clearly $R^1\pi_*(\sigma_*\sigma^*\sh{L})=0$ because $\sigma_*\sigma^*\sh{L}$ restricted to every geometric fiber is supported on a point. Therefore the natural map 
	$$ \xi: R^1\pi_*\sh{L}(-\sigma) \longrightarrow R^1\pi_*\sh{L}$$
	is surjective. Because $R^2\pi_*\sh{F}=0$ for every coherent sheaf $\sh{F}$, using cohomology and base change we know that $\xi$ restricts  to the morphism 
	$$ \xi_s:H^1(C_s,\sh{L}_s(-\sigma(s))) \longrightarrow H^1(C_s,\sh{L}_s) $$
	for every geometric point $s \in S$. Recall that on a smooth curve $C$ over an algebraically closed field a divisor $D$ is globally generated (without base points) if and only if $$h^1(D-P)=h^1(D)$$ for every $P$ closed point on $C$. Being $\xi_s$ surjective, this clearly implies $\xi_s$ isomorphism for every geometric point and therefore $\xi$ isomorphism.

	Thus, applying $\pi_*$ to the exact sequence
		$$ \xymatrix{ 0 \ar[r] & \sh{L}(-\sigma) \ar[r] & \sh{L} \ar[r] & \sigma_*\sigma^*\sh{L} \ar[r] & 0}$$
	 we get the statement.

\end{proof}
	Let $\phi$ be the natural morphism $\st{H}_{g,1}\rightarrow \st{H}_g$.
	Our idea is to define the two line bundles $\sh{A}_g$ and $\sh{B}_g$, whose first Chern classes generate the Chow ring of $\st{H}_{g,1}$, using the fact that $\varphi^*c(\fie{V}_g)=c(\sh{A}_g)c(\sh{B}_g)$. Fix a morphism $S\rightarrow \st{H}_{g,1}$ induced by the $1$-pointed hyperelliptic curve $(C\rightarrow S,\sigma)$. We define 
	$$ \sh{A}_g:=\pi_*\sh{L}(-\sigma),     \quad \sh{B}_g:=\sigma^{*}\sh{L}$$
	where $\sh{L}:= \omega_{C/S}^{\otimes g/2}((1-g/2)W)$. Notice that $\sh{L}_s$ is the divisor $f^*\sh{O}_{\fie{P}^1_{\overline{k}(s)}}(1)$ where $f:C_s \rightarrow \fie{P}^1_{\overline{k}(s)}$ is the degree $2$ cover of $\fie{P}^1$ of the hyperelliptic curve, therefore $\sh{L}_s$ is globally generated for every geometric point $ s \in S$. As we are in the hypotheses of \hyperref[le]{Lemma \ref{le}} we get that $c(\fie{V}_g)=c(\sh{A}_g)c(\sh{B}_g)$.
	
	\begin{osservazione}
		Recall that a vector bundle over ${\rm BB}_2$, where ${\rm B}_2$ is the Borel subgroup of ${\rm GL}_2$, is equivalent to a flag of ${\rm GL}_2$-representations  $0\subset F \subset E$ of length $2$. Notice that at the level of Chern classes it is the same as supposing that $E \simeq F\oplus E/F$, which is confirmed by the fact that ${\rm B}_2$-equivariant Chow group is isomorphic to the $T$-equivariant one, where $T$ is the maximal torus inside ${\rm GL}_2$.
		Given such a flag, we will say that $c(F)$ and $c(E/F)$ are the total Chern classes induced by it.
		\end{osservazione}
	
	In our situation, $T_0$ and $T_1$ will be the first Chern classes induced by the flag 
	$$0\subset F\otimes ({\rm det}E)^{\otimes g/2} \subset E\otimes({\rm det}E)^{\otimes g/2},$$
	 where $E$ is the standard representation of ${\rm GL}_2$, and $F$ is the subrepresentation stable under the standard action of ${\rm B}_2$ (in our setting the Borel subgroup is identified with the lower triangular matrices). 
	
	We consider as usual a $1$-pointed hyperelliptic curve $(\pi:C \rightarrow S,\sigma)$  and $\sh{L}$ is defined by $\sh{L}:=\omega_{C/S}^{\otimes g/2}((1-g/2)W)$. We start by taking the flag of vector bundles of $\st{H}_{g,1}$ induced by \hyperref[le]{Lemma \ref{le}}:
	$$ 0 \subset \pi_*\sh{L}(-\sigma) \subset \pi_*\sh{L} $$
    and let us consider the pullback to the atlas of $\st{H}_{g,1}$ that we denoted by ${\rm H}_{g,1}'$ (see \hyperref[re]{Remark \ref{re}}).
	This is described by taking the pushforward of the flag $0\subset \sh{L}(-\sigma) \subset \sh{L}$ through $\pi:C \rightarrow S$, which is the composition of the two maps $p:\fie{P}^1_S \rightarrow S$ and $f: C \rightarrow \fie{P}^1_S$. If we first take the pushforward by $f$, which is finite and flat of degree 2, we get
	$$ \xymatrix{ 0 \ar[r] & \sh{O}_{\fie{P}_S^1}\oplus \sh{O}_{\fie{P}^1_S}(-g) \ar[r] & \sh{O}_{\fie{P}^1_S}(1)\oplus \sh{O}_{\fie{P}^1_S}(-g) \ar[r] & (\sigma_{\infty})_*\sigma_{\infty}^*\sh{O}_{\fie{P}^1_S}(1) \ar[r] & 0.} $$
	Finally taking the pushfoward through $p$ we get the pullback to $S$ of the standard flag $0\subset F\subset E$ of ${\rm B}_2$ (up to the isomorphism ${\rm B}({\rm B}_2/\mu_{g+1}) \simeq {\rm BB}_2$). Notice that $p_*\sh{O}_{\fie{P}^1_S}(-g)=0$.
	If we compare it to the presentation exhibited in \hyperref[p]{Corollary \ref{p}}, we have proved the following description of the generators
	$$ T_0=c_1(\sh{A}_g)    \quad T_1=c_1(\sh{B}_g)$$ 
	where $\sh{A}_g$ and $\sh{B}_g$ are the two line bundles described above. 
	
	Now we pass to the odd case. The result is the following.
	
	\begin{teorema}
		Suppose $g$ is an odd positive integer with $g>1$ and ${\rm char}(k) > 2g$, in the notation of \hyperref[d]{Theorem \ref{d}}, we have $\tau=c_1(\sh{L})$ and $\rho=c_1(\sh{R})$ where $$\sh{L}(\pi:C\rightarrow S,\sigma):=\pi_*\omega_{C/S}^{\otimes \frac{g+1}{2}}\Big(\frac{1-g}{2}W\Big)$$
		and $\sh{R}(\pi:C \rightarrow S,\sigma):=\pi_*\omega_{C/S}^{-1}(W-2\sigma)$, with $W$ the Weierstrass divisor associated to an hyperelliptic curve $C\rightarrow S$.
	\end{teorema}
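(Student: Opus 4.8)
The plan is to follow the even-genus argument. Write $\phi\colon\st{H}_{g,1}\to\st{H}_g$ for the forgetful morphism, and recall from \cite{DiLor} and \cite{Pan} that, for $g$ odd, the generator $\tau$ of ${\rm CH}(\st{H}_g)$ is the first Chern class of the line bundle pulled back from the standard character of the $\fie{G}_m$-factor along $\st{H}_g\to{\rm B}(\fie{G}_m\times\fie{P}{\rm GL}_2)$, while $c_2,c_3$ are the Chern classes of the rank-$3$ bundle $\sh{E}$, the pullback of the adjoint representation $\mathfrak{sl}_2$ along $\st{H}_g\to{\rm B}\fie{P}{\rm GL}_2$. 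The first step is to recognise these two bundles geometrically on $\st{H}_g$: since for a hyperelliptic curve one has $\omega_{C/S}=f^{*}\sh{O}_{\fie{P}_S^1}(g-1)$ and $\sh{O}_C(W)=f^{*}\sh{O}_{\fie{P}_S^1}(g+1)$, the line bundle $\omega_{C/S}^{\otimes(g+1)/2}(\tfrac{1-g}{2}W)$ restricts to $\sh{O}_{C_s}$ on every geometric fibre, so its pushforward along $\pi$ is a line bundle, which I claim is the one with first Chern class $\tau$; likewise $\pi_{*}\omega_{C/S}^{-1}(W)=\pi_{*}f^{*}\sh{O}_{\fie{P}_S^1}(2)\cong{\rm Sym}^2 V$ with $V:=\pi_{*}f^{*}\sh{O}_{\fie{P}_S^1}(1)$, and ${\rm Sym}^2 V\otimes(\det V)^{-1}=\mathcal{E}nd_0(V)$ is exactly $\sh{E}$. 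Both identifications are to be checked on the atlas of $\st{H}_g$ by tracking the corresponding $\fie{G}_m\times\fie{P}{\rm GL}_2$-representations, as Edidin--Fulghesu do in the even case.

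The first generator is then immediate: the formula for $\sh{L}$ does not involve the section, so $\sh{L}$ is the pullback along $\phi$ of the line bundle on $\st{H}_g$ with first Chern class $\tau$, and since $\phi_N^{*}(\tau)=\tau$ in ${\rm CH}(\st{H}_{g,1})\cong{\rm CH}({\rm B}T)/I$ (noted in the discussion after \hyperref[d]{Theorem \ref{d}}), we get $\tau=c_1(\sh{L})$.

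For the second generator, the point is that $\phi^{*}\sh{E}$ becomes reducible. On the atlas ${\rm H}_{g,1}'=\widetilde{\fie{A}}_{sm}(2g+2)$ the $\fie{P}{\rm GL}_2$-factor is reduced to the Borel ${\rm PB}_2$ --- this is precisely the effect of the marked point, i.e.\ of the distinguished point $\sigma_\infty\in\fie{P}^1$ --- and $\mathfrak{sl}_2$ restricted to ${\rm PB}_2$ carries the canonical three-step filtration whose sub-line is the strictly lower-triangular part and whose top quotient is the strictly upper-triangular part, with $T$-weights $-1,0,1$ on the graded pieces. Hence $\phi^{*}\sh{E}=\mathcal{E}nd_0(V)$ has a distinguished quotient line bundle of first Chern class $\rho$ (and a sub-line-bundle of first Chern class $-\rho$); concretely it is the graded piece $\mathcal{H}om(L,V/L)$ of the flag of $\mathcal{E}nd_0(V)$ cut out by the sub-line-bundle $L\subset V$ that the section $\sigma_P=f\circ\sigma$ of $\fie{P}_S^1=\fie{P}(V)$ determines. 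It then remains to identify $\mathcal{H}om(L,V/L)$ with $\sh{R}=\pi_{*}\omega_{C/S}^{-1}(W-2\sigma)$, which I would do by pushing the exact sequence attached to $\sigma$ first through the degree-$2$ map $f\colon C\to\fie{P}_S^1$ and then through $p\colon\fie{P}_S^1\to S$, using $\omega_{C/S}^{-1}(W)=f^{*}\sh{O}_{\fie{P}_S^1}(2)$, exactly as the flag $0\subset\pi_{*}\sh{L}(-\sigma)\subset\pi_{*}\sh{L}$ was matched with the standard ${\rm B}_2$-flag in the even case. Comparing with the presentation in \hyperref[d]{Theorem \ref{d}} then gives $\rho=c_1(\sh{R})$.

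I expect the main obstacle to be this last identification: one must control the pushforwards and cohomology-and-base-change along $f$ and $p$ (the sheaves involved are not fibrewise cohomologically trivial, and the behaviour of these pushforwards over the Weierstrass locus, where $\sigma$ is a ramification point, needs care), and one must pin down the $\det$-twists so that the graded line bundle of the ${\rm PB}_2$-filtration of $\mathfrak{sl}_2$ matches $\sh{R}$ exactly rather than only up to an a priori unknown line bundle. Everything else is bookkeeping with equivariant Chern classes on the atlas, parallel to the even-genus argument.
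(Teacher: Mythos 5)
Your proposal follows essentially the same route as the paper: $\tau$ is pulled back unchanged from $\st{H}_g$ since $\sh{L}$ does not involve the section, and $\rho$ is read off from the filtration that the marked point induces on the rank-$3$ bundle $\sh{E}=\pi_*\omega_{C/S}^{-1}(W)$ (the adjoint representation of $\fie{P}{\rm GL}_2$), compared with the weights $l,1,1/l$ of the diagonal torus under $Ad$. The only divergence is that you single out the quotient line of the filtration where the paper takes the sub-line $\pi_*\omega_{C/S}^{-1}(W-2\sigma)$ of the flag by vanishing order at $\sigma$; this affects only the sign of $\rho$, which is immaterial because $\rho$ enters the presentation of Theorem \ref{d} only through $\rho^2$.
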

	
	If $g$ is odd, we can apply the same reasoning, using the geometric description of the two generators of ${\rm CH}(\st{H}_g)$ given in \cite{DiLor}. 
    We consider the two generators $\tau$ and $\rho$ like in \hyperref[d]{Corollary \ref{d}}. Clearly by construction $\tau$ is the same compared to the one in \cite{DiLor};
    therefore $\tau=c_1(\sh{L})$ where $\sh{L}$ is functorially defined by 
    $$ \sh{L}((\pi:C\rightarrow S),\sigma) = \pi_*\omega_{C/S}^{\otimes \frac{g+1}{2}}\Big(\frac{1-g}{2}W\Big)$$
    where $W$ is the ramification divisor. The other generators in the case of $\st{H}_g$ are the Chern classes of the $3$-dimensional vector bundle $\sh{E}$ defined functorially by 
    $$\sh{E}(\pi:C\rightarrow S)= \pi_*\omega_{C/S}^{-1}(W).$$
    We repeat the procedure used in the case of $g$ even. If we consider a morphism $ S \rightarrow \st{H}_{g,1}$ induced by the element $(\pi:C\rightarrow S,\sigma)$, we can consider the flag on $S$ 
    $$ 0 \subset \pi_*\omega_{C/S}^{-1}(W)(-2\sigma) \subset \pi_*\omega_{C/S}^{-1}(W)(-\sigma) \subset \pi_*\omega_{C/S}^{-1}(W) $$ 
    that corresponds to the flag of ${\rm B_3}$-representations 
    $$ 0\subset \sh{O}_S \subset \sh{O}_S^{\oplus 2} \subset \sh{O}_S^{\oplus 3} $$ 
    induced by the inclusions $f \mapsto (0,f)$ and $(g,h) \mapsto (0,g,h)$. Using the adjunction representation map $Ad: \fie{G}_m\subset \rm{PB}_2 \rightarrow {\rm B}_3$ we easily get that $\rho=c_1(\sh{R})$ where $\sh{R}$ is functorially defined by 
    $$ \sh{R}(\pi:C\rightarrow S,\sigma):=\pi_*\omega_{C/S}^{-1}(W-2\sigma).$$
    
    We have thus given the geometric interpretion of the generators of ${\rm CH}(\st{H}_{g,1})$ in both the cases of $g$ even and odd.

\bibliographystyle{alpha}

 \end{document}